\newtheorem{thm}{Theorem}
\newtheorem{lemma}[thm]{Lemma}
\newtheorem{prop}[thm]{Proposition}
\newtheorem{cor}[thm]{Corollary}
\theoremstyle{definition}
\newtheorem{rem}[thm]{Remark}
\newtheorem{Def}[thm]{Definition}
\DeclareMathOperator{\Aut}{Aut}
\DeclareMathOperator{\tft}{tft}
\DeclareMathOperator{\id}{id}
\newcommand\erf[1]      {\mbox{(\ref{#1})}}
\newcommand\void[1] {}
\renewcommand\hom  {\mbox{Hom}}
\renewcommand\phi  {\varphi}
\def\be            {\begin{equation}}
\def\bearl         {\begin{array}{l}} 
\def\bearll        {\begin{array}{ll}}
\def\bearlll       {\begin{array}{lll}}
\def\catdim                {\mathcal{D}}
\def\C                     {\ensuremath{\mathcal{C}}\xspace}
\def\CC                    {\ensuremath{\mathcal{C}\boxtimes\mathcal{C}}\xspace}
\def\CG                    {\ensuremath{\C^G}\xspace}
\def\CX                                    {\ensuremath{\C^{\mathcal{X}}}\xspace}
\def\CZ                    {\CX}
\def\cobd                  {\ensuremath{{\bf cob(d)}}\xspace}
\def\cobtwo                {\ensuremath{{\bf cob(2)}}\xspace}
\def\congto                {\stackrel{\cong}{\to}}
\def\cover         {\mathcal{F}}
\def\defn         {\stackrel{\mathrm{def}}=}
\def\ext                   {\ensuremath{{\bf Ext}}\xspace}
\def\ee            {\end{equation}}
\def\eear          {\end{array}}
\def\gdcob                 {\ensuremath{{\bf Gcob(d)}}\xspace}
\def\Gext                  {\ensuremath{{\bf GExt}}\xspace}
\def\gtwocob       {\ensuremath{{\bf Gcob(2)}}\xspace}
\def\gtft                  {\tft^G}
\def\homc                  {\hom_{\C}}
\def\homcc                 {\hom_{\C\boxtimes\C}}
\def\homcz         {\hom_{\CZ}}
\def\I                     {\mathcal{I}}
\def\la                    {\langle}
\def\oti                   {\otimes}
\def\otik                  {\otimes_{\Bbbk}}
\def\unit          {\mathbf{1}}  
\def\R                     {\mathcal{R}}
\def\ra                    {\rangle}
\def\rag                   {\rangle_{\X}}
\def\tauzz                 {\tau^{\X}}
\def\tftR                  {\tft_R}
\def\vect          {{\mathcal V}\mbox{\sl ect}}
\def\vectk         {{\mathcal V}\mbox{\sl ect}_\Bbbk}
\def\X                     {\ensuremath{\mathcal{X}}\xspace}
\def\ZZ                    {\ensuremath{\mathbb{Z}/2}\xspace}
\def\B                          {{\bf B}}
\def\Z                          {{\bf Z}}
\def\S                          {{\bf S}}
\def\F                          {{\bf F}}
\def\bp            {\begin{picture}}
\newcommand\pic[1] {\includegraphics{#1}}
\def\ep            {\end{picture}}
\newcommand\rib[1] {\includegraphics{#1}}
\newcommand{\BIGOP}[1]{\mathop{\mathchoice%
{\raise-0.22em\hbox{\LARGE $#1$}}%
{\raise-0.05em\hbox{\Large $#1$}}{\hbox{\large $#1$}}{#1}}}
\newcommand{\bigboxtimes}{\BIGOP{\boxtimes}}
\begin{document}

\thispagestyle{empty}
\def\thefootnote{\fnsymbol{footnote}}
\begin{flushright}
   {\sf ZMP-HH/10-9}\\
   {\sf Hamburger$\;$Beitr\"age$\;$zur$\;$Mathematik$\;$Nr.$\;$ 367}\\[2mm]
   April 2010
\end{flushright}
\vskip 2.0em
\begin{center}\Large 
   A GEOMETRIC CONSTRUCTION FOR PERMUTATION EQUIVARIANT CATEGORIES FROM MODULAR FUNCTORS

\end{center}\vskip 1.4em
\begin{center}
  Till Barmeier\,$^{\,a}$ and
  ~Christoph Schweigert\,$^{\,a}$\footnote{\scriptsize 
  ~Email addresses: \\
  $~$\hspace*{2.4em}barmeier@math.uni-hamburg.de, 
  Christoph.Schweigert@uni-hamburg.de}
\end{center}

\vskip 3mm

\begin{center}\it$^a$
  Organisationseinheit Mathematik, \ Universit\"at Hamburg\\
  Bereich Algebra und Zahlentheorie\\
  Bundesstra\ss e 55, \ D\,--\,20\,146\, Hamburg
\end{center}

\vskip 2.5em
\begin{abstract} \noindent
Let $G$ be a finite group. Given a finite $G$-set $\X$ 
and a modular tensor category $\C$, we construct a
weak $G$-equivariant fusion category \CX, called the permutation
equivariant tensor category. The construction is geometric
and uses the formalism of modular functors.
As an application, we concretely work out a complete set of 
structure morphisms for $\ZZ$-permutation equivariant categories,
finishing thereby a program we initiated in 
\cite{bfrs}.  
\end{abstract}

\setcounter{footnote}{0} \def\thefootnote{\arabic{footnote}} 

\newpage

\tableofcontents

\newpage

\section{Introduction}

The correspondence between commutative Frobenius algebras
over a field $k$ and two-dimensional topological field theories
with values in $k$-vector spaces is by now a classic result of mathematical physics
\cite{Atiyah:1989vu,Dijkgraaf-thesis}. Its most precise form is the 
assertion \cite{kock} 
that  the  tensor categories of two-dimensional topological field 
theories and of commutative Frobenius algebras are 
equivalent as symmetric tensor categories. 

Various generalizations of this theorem have been
addressed. On the one hand side, given any (finite) group $G$,  
$G$-equivariant two-dimensional topological field theories
have lead to the notion of a $G$-Frobenius algebra
\cite{mose,turaevhomotopy}. For three-dimensional topological
field theories on the other hand, one is lead to the
algebraic structure of a
modular tensor category. A structure related to
three-dimensional topological
field theories that is more appropriate for our purposes is given by the
notion of a modular functor. For
any abelian category \C satisfying suitable finiteness 
conditions, the following correspondences have been
established \cite{BK}: 
\begin{itemize}
\item 
\C-extended genus $0$ modular functors correspond 
to structures of (weakly) ribbon categories on  \C.

\item 
Higher genus modular functors correspond to structures 
of a modular category on \C. 
\end{itemize}

We now fix a finite group $G$.
Equivariant versions of three-dimensional topological field 
theories have been constructed from crossed group categories 
in \cite{turaevhomotopy3d}; for the related notion of
a $G$-modular functor see \cite{kpGMF}.
A partial generalization of the preceding statements  asserts
\cite{kpGMF} that
the structure of a (weakly) $G$-equivariant fusion category 
on a given $G$-equivariant abelian category  $\CG$ is 
equivalent to a $\CG$-extended $G$-equivariant genus $0$ 
modular functor.

The present paper is devoted to the construction of
a $G$-equivariant fusion category $\CX$ from 
a finite $G$-set $\X$ and a modular tensor category
$\C$. We address the problem by constructing 
from these data a $G$-modular functor.

Let us pause to explain the importance of
this construction: a general construction \cite{kirillov}
allows to associate to any $G$-equivariant modular category \CG a
modular category $\CG//G$, the orbifold category.
In the special case of the permutation group $G=S_N$
acting on the set $\X=\underline N:=\{1,2,\ldots,N\}$ 
of $N$ elements, one obtains
from permutation equivariant categories 
permutation orbifolds. These categories \cite{Fuchs:2003yk}
enter in the construction of boundary conditions
for tensor product theories that break permutation symmetries,
so-called permutation branes \cite{Recknagel:2002qq}.
Moreover, they conveniently encode refined
aspects of the family of representations of mapping
class groups that is associated to the modular tensor category
$\C$. This explains the role of permutation orbifolds 
in Bantay's approach to 
the congruence subgroup conjecture (\cite{BantayPO,BantayKernel}).
(For a different proof of the congruence subgroup conjecture
that is based on generalized Frobenius-Schur indicators,
see \cite{Ng-Schauenburg}.)
We expect that the module categories over
$\C^{\boxtimes N}$ constructed in this work describe
permutation modular invariants
on $\C^{\boxtimes N}$;  our construction can thus also be seen as
a first step towards showing that these modular invariants
are physical.

We now summarize the content of this paper:
given a finite $G$-set $\X$, we construct a symmetric monoidal 
functor $\cover_\X$ from the 
category $\gdcob$ of $G$-cobordisms to the category $\cobd$ 
of cobordisms. This functor assigns to a principal $G$-cover $(P\to M)$
the total space of the associated bundle
\be
\cover_\X(P\to M):=\X\times_GP=\X\times P/((g^{-1}x,p)\sim (x,gp)) \,\, .
\ee
Pulling back topological field theories along this functor
$\cover_\X$, we find $G$-equivariant theories.
This functor is introduced in subsection \ref{sec:GTFT} and used in
subsections \ref{sec:2dgeom} and \ref{sec:2dalg} to study 
geometric and algebraic properties of two-dimensional 
$G$-equivariant field theories.

These sections also
contain necessary preparation for the construction
of a $\CG$-extended modular functor for which we need
to know the corresponding category $\CG$ as a 
$G$-equivariant abelian category. We gain the necessary
insight in the structure for the correct ansatz by first
representing a given two-dimensional TFT by the corresponding commutative 
Frobenius algebra $(R,m,\eta,\Delta, \epsilon)$ and then
reading  off the full $G$-Frobenius algebra $A$ of the 
induced $G$-TFT.

In section \ref{sec:GMF} we present theorem \ref{thm:GMF}, one of
our main results: we use the cover functor  $\cover_\X$ to obtain a
$G$-equivariant modular functor for every $G$-set $\X$
and modular tensor category $\C$.

We illustrate the situation by the following diagram:
\be
\raisebox{-15pt}{
\xymatrix{
\mbox{modular category $\C$}\ar[rr]&&\mbox{$\C$-extended modular functor $\tau$}\ar[dd]^{\mbox{$\cover_\X$-construction}}\ar@/_2pc/@{-->}[ll]\\
\\
\mbox{$G$-modular category $\CX$}\ar[rr]&&\mbox{$\CX$-extended modular functor $\tau^G$}\ar@/^2pc/@{-->}[ll]
}
}\\[2em]
\ee
The upper arrow that points to the left is dashed, since a
$\C$-extended modular functor endows an abelian category $\C$ only
with a weak duality \cite{BK} while on a modular tensor category one
has a strong duality. The lower arrow pointing to the left
is dashed not only for this reason;
moreover the algebraic structure corresponding to higher 
genus $G$-equivariant modular functors has, so far,
not yet fully been worked out.

There is no genus zero version of our results and the
the requirement on $\C$ to be modular cannot be weakened:
the modular functor corresponding to $\C$ has to be 
applied to total spaces of covers of manifolds which can
have higher genus, even if the base manifold is of genus 
$0$.

In section \ref{sec:z2} we use these results to derive in full detail the structure of a $\ZZ$-equivariant fusion category obtained from the permutation action of the group $\ZZ$
on the set of two elements, completing thus the program initiated 
in \cite{bfrs}. In contrast to the ad hoc ansatz used 
in \cite{bfrs}, the geometric structure unraveled in this paper
provides clear guiding principles to write down a consistent
set of constraint morphisms.

We fix the following conventions for this paper:
$G$\ is a finite group and $\X$ is a finite $G$-set. 
$\Bbbk$ is an algebraically closed field of characteristic 
zero. All manifolds are  smooth oriented manifolds and all 
maps are smooth and orientation preserving. 
We freely use the graphical calculus for morphisms in ribbon categories for which we refer to \cite{joSt5,TFT1}.

\subsubsection*{Acknowledgements}
We would like to thank Thomas Nikolaus for helpful discussions.
The authors are partially supported by the DFG Priority Program SPP 1388
``Representation theory''.


\section{Cover functors and two-dimensional topological field
theories}
\subsection{$G$-cobordisms and cover functors}\label{sec:GTFT}

In this subsection we  recall the notion of a two-dimensional  $G$-equivariant 
topological field theory.
We first define a category \gdcob\ of cobordisms with $G$-covers.
Throughout this paper,
cobordism categories will be true categories rather than higher
categories; correspondingly, topological field theories will not
be extended or $n$-tier topological field theories.

An object $(P\to\Sigma,e)$ of \gdcob consists of a $(d-1)$-dimensional closed oriented manifold $\Sigma$,
together with a principal left $G$-bundle $P$ on $\Sigma$. For technical
reasons, we also fix a marked point $e$ on each connected
component of $P$. 
For any oriented manifold $M$, the manifold with the opposite orientation will be denoted by $\overline M$. It is sufficient to choose
orientations of the base manifolds:

\begin{lemma}\label{lemma:cover-oriented}
Let $\pi:P\to M$ be a discrete cover of an oriented manifold $M$. Then the orientation of $M$
induces a canonical orientation on $P$. 
\end{lemma}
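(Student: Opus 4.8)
The plan is to exploit the fact that a discrete cover is, in the smooth category, a local diffeomorphism. Concretely, since $\pi:P\to M$ has discrete fibres, every point $p\in P$ has an open neighborhood $U$ that $\pi$ maps diffeomorphically onto an open subset $V:=\pi(U)\subseteq M$; equivalently, the differential $d\pi_p:T_pP\to T_{\pi(p)}M$ is a linear isomorphism for every $p$. This is precisely the structure needed to transport orientations, and it is what I would isolate at the outset.

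First I would recall that an orientation of $M$ amounts to a continuously varying choice of orientation on each tangent space $T_xM$ (equivalently, an atlas whose transition maps have positive Jacobian, or a choice of connected component in each fibre of $\det TM$ away from its zero section). Using the isomorphisms $d\pi_p$, I define the candidate orientation on $P$ pointwise: a basis $(v_1,\dots,v_n)$ of $T_pP$ is declared positively oriented if and only if its image $(d\pi_p v_1,\dots,d\pi_p v_n)$ is a positively oriented basis of $T_{\pi(p)}M$. Since $d\pi_p$ is an isomorphism, this is well defined on each tangent space and involves no arbitrary choices.

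The one point that genuinely requires verification is that this pointwise prescription defines a \emph{continuous} orientation on $P$, rather than an unrelated orientation on each tangent space separately. Here the local diffeomorphism property does the work: restricting to a neighborhood $U$ on which $\pi$ is a diffeomorphism onto $V$, the prescription above is exactly the pullback of the orientation of $V\subseteq M$ along $\pi|_U$, which is continuous; hence the chosen orientations vary continuously on $U$. Because the defining rule refers only to $\pi$ and the fixed orientation of $M$, the local descriptions on overlapping neighborhoods automatically agree, so they glue to a global orientation of $P$. Equivalently, in bundle language, $d\pi$ furnishes an isomorphism $TP\cong\pi^*TM$, hence $\det TP\cong\pi^*\det TM$, and the orientation of $M$ (a trivialization class of $\det TM$) pulls back to one of $\det TP$.

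Since no choice is made at any stage, the induced orientation is canonical. I expect no serious obstacle beyond the continuity/gluing bookkeeping just described; the entire content of the lemma is the observation that covering maps are local diffeomorphisms, so that the orientation of the base can be pulled back fibrewise and assembled globally.
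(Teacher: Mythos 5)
Your proof is correct and follows essentially the same route as the paper: both exploit that a discrete cover is a local diffeomorphism and pull the orientation of $M$ back along $\pi$ (the paper phrases this as pulling back a global section of the orientation bundle, which is exactly your $\det TP\cong\pi^*\det TM$ formulation). Your pointwise definition and continuity check simply spell out the details that the paper's one-line proof leaves implicit.
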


\begin{proof}
The global orientation of $M$ can be represented by global section of the orientation bundle on $M$, which can be pulled back along the local diffeomorphism $\pi$ to a global section of the orientation bundle of $P$.
\end{proof}

The morphisms in \gdcob  from $(P_1\to\Sigma_1,e_1)$ to  $(P_2\to\Sigma_2,e_2)$
are diffeomorphism classes of cobordisms of the base and the total space of 
the principal bundles. More precisely, consider pairs $(M,E)$, consisting of a left principal
$G$-bundle $E$ over a $d$-dimensional oriented manifold $M$
and an orientation preserving diffeomorphism of $G$-bundles
from the restriction $\partial E\to\partial M$ to the
$G$-bundles on the boundary, i.e.\ diffeomorphisms
$$\partial M\stackrel\cong\to \Sigma_1\sqcup\overline{\Sigma_2}\,\, ,
\qquad
E|_{\Sigma_1}\stackrel\cong\to P_1
\, \text{ and }\,\, 
E|_{\overline{\Sigma_2}}\stackrel\cong\to\overline{P_2}
\,\, . $$
The morphisms in \gdcob are now obtained by modding out diffeomorphisms of $E$.
We write $(E\to M)$ for a representative.

The composition of morphisms is gluing along the boundary of both the base and the total space:
Let $(P_i\to\Sigma_i,e_i)$ for $i=1,2,3$ be objects in \gdcob and $(E\to M):(P_1\to\Sigma_1,e_1)\to (P_2\to\Sigma_2,e_2)$ and 
$(E'\to M'):(P_2\to\Sigma_2,e_2)\to (P_3\to\Sigma_3,e_3)$ be cobordisms. Then the manifold 
$M\sqcup_{\Sigma_2}M'$ obtained by gluing the base spaces is naturally equipped with 
the $G$-principal bundle $E\sqcup_{P_2}E'$. 
The identity on $(P\to\Sigma,e)$ is the diffeomorphism class of the cylinder over $\Sigma$ with trivial $G$-cover. 
The category \gdcob has a natural structure of a symmetric 
monoidal category, with tensor product given by disjoint union of manifolds and bundles. The  empty set with empty $G$-bundle
is the tensor unit.

We comment on the role of the marked point $e$ on the $G$-cover $P$ of an object
$(P\to\Sigma,e)$ which we have chosen as an auxiliary datum. Its projection on $\Sigma$ determines a base point $x\in \Sigma$. 
Moreover, it determines an identification of the fibre $P_x$ over $x$ with $G$. We did not choose marked points on the covers of
morphisms.
Thus different choices for $e\in P$ give objects that are isomorphic in \gdcob with the isomorphism
being the cylinder with the trivial $G$-cover.
In the case of the trivial group $G=1$, one can forget the marked point and obtains an
equivalence of categories of \gdcob to the usual category \cobd of manifolds and cobordisms.

\begin{Def}\label{def:gtft}
The category of \emph{$d$-dimensional $G$-equivariant topological field theories} (or $G$-TFTs for short) is the category of symmetric monoidal functors
\be
\gtft:\gdcob\to\vectk.
\ee
with monoidal natural transformations as morphisms.
\end{Def}

One ingredient in our construction of $G$-TFTs is a finite left $G$-set $\X$.
In the construction we will need to make choices for all partitions of \X. To this end,
we fix an order on $\X$ as an auxiliary datum.

For any object $(P\to\Sigma,e)$ of \gdcob, we consider the smooth
$(d-1)$-manifold $\X\times_GP$, where
$\X\times_GP=(\X\times P)/((g^{-1}x,p)\sim (x,gp))$. Similarly, we obtain
smooth $d$-manifolds 
for morphisms of \gdcob. For any $G$-bundle $P\to M$, the manifold $\X\times_GP$ is the total space of 
an $|\X|$-fold cover of $M$, we call this cover the associated $G$-cover. We agree to write $[x,p]\in\X\times_GM$ for the equivalence class of $(x,p)\in \X\times M$ for any $G$-manifold $M$.

\begin{prop}\label{coverprop}
Let $\X$ be a finite ordered left $G$-set. The assignment
$(P\to\Sigma,e)\mapsto \X\times_G P$
defines a symmetric monoidal functor
\be
\cover_\X: \,\,\gdcob\to \cobd
\ee
\end{prop}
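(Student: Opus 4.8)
The plan is to verify directly the defining properties of a symmetric monoidal functor, treating the associated-bundle construction $\X\times_G(-)$ as a functor from $G$-bundles to covers and checking that it is compatible with every piece of structure on \gdcob. First, on objects: for $(P\to\Sigma,e)$ the projection $[x,p]\mapsto\pi(p)$ exhibits $\X\times_G P$ as a covering of $\Sigma$ with discrete fibre $\X$, hence of degree $|\X|$, and in particular as a smooth closed $(d-1)$-manifold. Being a discrete cover of the oriented manifold $\Sigma$, it acquires a canonical orientation by Lemma \ref{lemma:cover-oriented}, so we obtain an object of \cobd. The marked point $e$ has no counterpart in \cobd and is simply forgotten, which is consistent since distinct choices of $e$ give objects of \gdcob isomorphic through a cylinder, and a cylinder is sent to a cylinder.

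Next, on morphisms: for a representative $(E\to M)$ I would form $\X\times_G E$, again a cover of $M$ and hence an oriented $d$-manifold by Lemma \ref{lemma:cover-oriented}. The key observation is that, because the fibre $\X$ is discrete, the associated-bundle construction commutes with restriction to the boundary, $\partial(\X\times_G E)\cong\X\times_G(\partial E)$. Combining this with the boundary identifications $E|_{\Sigma_1}\cong P_1$ and $E|_{\overline{\Sigma_2}}\cong\overline{P_2}$, and with the fact that the induced orientation on a cover is pulled back from the base along the covering map and therefore reverses when the base orientation is reversed (so that $\X\times_G\overline{P_2}=\overline{\X\times_G P_2}$), exhibits $\X\times_G E$ as an oriented cobordism from $\X\times_G P_1$ to $\X\times_G P_2$. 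Independence of the chosen representative follows from functoriality of $\X\times_G(-)$: a diffeomorphism of $G$-bundles $E\cong E'$ fixing the boundary induces a diffeomorphism $\X\times_G E\cong\X\times_G E'$ fixing the boundary.

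The substantial point is functoriality under composition, where I must show that $\X\times_G(-)$ commutes with gluing, i.e.
\be
\X\times_G(E\sqcup_{P_2}E')\;\cong\;(\X\times_G E)\sqcup_{\X\times_G P_2}(\X\times_G E')
\ee
compatibly with the analogous statement on the bases. This is the step I expect to be the main obstacle, since one must check not merely a homeomorphism but a diffeomorphism matching the smooth and orientation data along the gluing locus. I would argue that the associated-bundle construction is local over the base, so collar neighbourhoods of $P_2$ in $E$ and in $E'$ are carried to collar neighbourhoods of $\X\times_G P_2$; the gluing of cobordisms is a pushout along the common boundary, and $\X\times_G(-)$ preserves this pushout because the two sides restrict to the same cover $\X\times_G P_2$ on the overlap. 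For identities, the identity cobordism on $(P\to\Sigma,e)$ is the cylinder carrying the cylinder bundle $P\times[0,1]$, and $\X\times_G(P\times[0,1])=(\X\times_G P)\times[0,1]$ is the cylinder over $\X\times_G P$, i.e. the identity of \cobd.

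Finally, for the symmetric monoidal structure I would use that $\X\times_G(-)$ is performed componentwise: $\X\times_G(P\sqcup P')=(\X\times_G P)\sqcup(\X\times_G P')$ and $\X\times_G\emptyset=\emptyset$, so the monoidal structure morphisms may be taken to be identities, making \cover_\X\ strictly monoidal. Compatibility with the symmetry then amounts to the observation that the swap cobordism implementing the braiding in \gdcob\ is sent, again componentwise, to the swap cobordism implementing the symmetry in \cobd. Assembling these verifications yields that \cover_\X\ is a symmetric monoidal functor.
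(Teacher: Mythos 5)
Your proposal is correct and follows essentially the same route as the paper, which proves the proposition by direct verification and explicitly singles out compatibility with gluing of cobordisms as the only intricate point --- exactly the step you identify and elaborate (your pushout/collar argument fills in what the paper checks by the explicit pointwise map $[x,p]\mapsto[x,p]$ on the glued associated bundles). The remaining verifications (orientations via Lemma \ref{lemma:cover-oriented}, forgetting the marked points, cylinders mapping to cylinders, strict compatibility with disjoint union and the symmetry) coincide with the paper's treatment.
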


\begin{proof}
\void{
We have to show that $\cover$ respects composition of morphisms in \gdcob, that is gluing of $G$-covers of cobordisms.

Let $M_1$, $M_2$ and $M_3$ be objects in \gdcob and $(\Sigma, P^\Sigma)$, $(\Sigma', P^{\Sigma'})$ be cobordisms in \gdcob with $\partial\Sigma\cong M_1\sqcup\overline {M_2}$ and $\partial\Sigma'\cong M_2\sqcup\overline {M_3}$ such that $P^\Sigma|_{M_1}\cong P^{M_1}$, $P^\Sigma|_{\overline {M_2}}\cong \overline{P^{M_2}}$, $P^{\Sigma'}|_{M_2}\cong P^{M_2}$ and $P^{\Sigma'}|_{\overline {M_3}}\cong\overline{P^{M_3}}$. For simplicity we will assume that the covers of $M_1$, $M_2$ and $M_3$ are in fact submanifolds of $P^\Sigma$ and $P^{\Sigma'}$ respectively.

For $\cover_\X$ to be a functor, we have to show that there is a diffeomorphism
\be
\phi:(P^\Sigma\sqcup_{P^{M_2}}P^{\Sigma'})\times_G\underline N\stackrel\cong\to (P^\Sigma\times_G\underline N)\sqcup_{P^{M_2}\times_G\underline N}(P^{\Sigma'}\times_G\underline N).
\ee
Put
\be
\phi[x,n]:=[x,n]
\ee
where on the left hand side $[x,n]$ is viewed as an element of $(P^\Sigma\sqcup_{P^{M_2}}P^{\Sigma'})\times_G\underline N$ and on the right hand side as an element of $(P^\Sigma\times_G\underline N)\sqcup_{P^{M_2}\times_G\underline N}(P^{\Sigma'}\times_G\underline N)$.
To see that this is well-defined, let $[x,n]=[x',n']$ in $(P^\Sigma\sqcup_{P^{M_2}}P^{\Sigma'})\times_G\underline N$, hence $(x',n')=(g^{-1}x,gn)$.
The following cases can occur:
\begin{itemize}
\item $x\in P^\Sigma\setminus P^{M_2}$. Then $\phi[x,n]\in (P^\Sigma\times_G\underline N)$ and $\phi[x,n]=\phi[x',n']$.
\item $x\in P^{\Sigma'}\setminus P^{M_2}$. This is similar to the previous case.
\item $x\in P^{M_2}$. Then $\phi[x,n]$ is defined as an element of both $(P^\Sigma\times_G\underline N)$ and of $(P^{\Sigma'}\times_G\underline N)$. But as these are identified along $(P^{M_2}\times_G\underline N)$, there is no ambiguity here.
\end{itemize}

It is clear that $\phi$ is surjective. To see that it is injective, assume that $[x,n]=[x',n']$ holds in $(P^\Sigma\times_G\underline N)\sqcup_{P^{M_2}\times_G\underline N}(P^{\Sigma'}\times_G\underline N)$. The only nontrivial case is when $[x,n]$ lies in $P^{M_2}\times_G\underline N$. But then $[x,n]=[x',n']$ in $(P^\Sigma\sqcup_{P^{M_2}}P^{\Sigma'})\times_G\underline N$.
} 
The proof is straightforward, including its most intricate aspect, the fact that gluing of cobordisms is respected.
\end{proof}

\begin{Def}\label{def:coverfunctor}
The functor $\cover_\X$ in proposition \ref{coverprop} is called 
the \emph{$d$-dimensional cover functor} for the $G$-set $\X$.
\end{Def}

\begin{cor}\label{cor:covertft}
Let $\tft:\cobd\to \vectk$ be a topological field theory and let $\X$ be a $G$-set. Then the composite functor
\be
\tft^\X:\,\,
\gdcob\stackrel{\cover_\X}{\to}\cobd\stackrel{\tft}{\to}\vectk
\ee
is a $d$-dimensional $G$-equivariant topological field theory in the sense of definition \ref{def:gtft}.
\end{cor}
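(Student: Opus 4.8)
The plan is to read off from Definition~\ref{def:gtft} that a $G$-equivariant topological field theory is \emph{precisely} a symmetric monoidal functor $\gdcob\to\vectk$; hence it suffices to exhibit the composite $\tft^\X=\tft\circ\cover_\X$ as such a functor. The whole argument then collapses to the elementary observation that a composite of symmetric monoidal functors is again symmetric monoidal.

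First I would collect the two structural inputs. By Proposition~\ref{coverprop} the cover functor $\cover_\X\colon\gdcob\to\cobd$ is symmetric monoidal, source and target both carrying disjoint union as tensor product. On the other hand, a topological field theory $\tft\colon\cobd\to\vectk$ is by its very definition a symmetric monoidal functor, the tensor product on $\vectk$ being $\otik$ with tensor unit the ground field $\Bbbk$. Writing the monoidal structure data as $(\cover_\X,\mu,\iota)$ and $(\tft,\nu,\jmath)$, with $\mu,\nu$ the tensor constraints and $\iota,\jmath$ the unit constraints, I note that all four are isomorphisms, both functors being strong monoidal.

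Next I would assemble the monoidal structure on the composite. For objects $a,b$ of $\gdcob$ the required tensor constraint is the pasting
\be
\tft(\cover_\X a)\otik\tft(\cover_\X b)\;\congto\;\tft(\cover_\X a\sqcup\cover_\X b)\;\congto\;\tft(\cover_\X(a\sqcup b)),
\ee
the first arrow an instance of $\nu$ and the second the image under $\tft$ of an instance of $\mu$; the unit constraint is built analogously from $\jmath$ and $\tft(\iota)$. Checking the associativity (pentagon) and unit (triangle) axioms for this pasted constraint is then automatic: each axiom diagram decomposes into the corresponding diagram for $\cover_\X$, transported through $\tft$ which preserves commutativity of diagrams, glued to the diagram for $\tft$, and both halves commute by hypothesis. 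In the same way, compatibility with the symmetric braidings---the disjoint-union swap on the cobordism side and the flip on $\vectk$---follows by pasting the symmetry square for $\cover_\X$ beneath that for $\tft$.

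There is essentially no genuine obstacle here: the statement is a formal consequence of Proposition~\ref{coverprop} together with the lemma that symmetric monoidal functors compose. The only point demanding a little care is the bookkeeping of the three distinct tensor structures (disjoint union on both $\gdcob$ and $\cobd$, and $\otik$ on $\vectk$) and their symmetries, so that the pasted constraints are seen to land in the correct objects; once this is tracked, every coherence condition reduces to one already guaranteed for the two factors, and $\tft^\X$ is a $G$-TFT.
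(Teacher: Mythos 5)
Your proposal is correct and follows exactly the route the paper intends: the corollary is stated without proof precisely because, by Definition~\ref{def:gtft}, a $G$-TFT \emph{is} a symmetric monoidal functor $\gdcob\to\vectk$, so the claim reduces to Proposition~\ref{coverprop} plus the standard fact that symmetric monoidal functors compose. Your explicit pasting of the tensor and unit constraints and the coherence diagrams is just the careful spelling-out of that formal observation.
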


\subsection{Covers of two-dimensional cobordisms}
\label{sec:2dgeom}

From now on, we specialize to dimension $d=2$. We recall the 
following definition \cite{BKlego, BK, kpGMF}:

\begin{Def}\label{def:extendedsurf}
\begin{enumerate}
\item
An \emph{extended surface} is a compact oriented smooth two-dimensional manifold $M$, possibly with boundary, 
together with a choice of a marked point  on each connected component of the boundary $\partial M$. The set of boundary components of $M$ is denoted by $A(M)$ and we write extended surfaces as $(M,\{e_a\}_{a\in A(M)})$. A morphism of extended surfaces is a smooth map that preserves marked points.

\item
A \emph{$G$-cover of an extended surface} $(M,\{e_a\}_{a\in A(M)})$ is a pair $(P\to M, \{p_a\}_{a\in A(M)})$, where $P\to M$ is a principal $G$-cover of $M$ and the $p_a$ are marked points in the fibre over $e_a$. A morphism of $G$-covers of extended surfaces is a smooth bundle map that preserves marked points.
\end{enumerate}
\end{Def}

The morphisms in \gtwocob are thus diffeomorphism classes of 
$G$-covers of extended surfaces. We also consider a category
\ext whose objects are extended surfaces and whose
morphisms are orientation preserving 
diffeomorphisms of extended surfaces. Similarly, we have
a $G$-equivariant version, a category \Gext with
$G$-covers of extended surfaces as objects and
orientation preserving
diffeomorphisms of $G$-covers of extended surfaces
as morphisms. 

The cover functor $\cover_\X$ induces a functor 
$\Gext\to\ext$ which we will also denote by $\cover_\X$. 
To see this, let $(E\to M, \{p_a\}_{a\in A(M)})$ be an object in \Gext, i.e. a $G$-cover of an extended surface. 
By definition, $E$ is endowed with 
a marked point $p_a$ for each connected component of the boundary of $M$.
This yields on the boundary of $\cover_\X(E\to M)$ the marked points $[x,p_a]$ 
for every element $x\in\X$ of the $G$-set and every boundary component $a$ of $M$. To turn $\cover_\X(E\to M)$ into an extended surface, we need to
choose just a single point for every connected component of the boundary of $\cover_\X(E\to M)$. At this point, we use the auxiliary
structure of an ordering on the $G$-set $\X$ to choose the point
$[x,p_a]$  with the smallest value of $x\in\X$ in that boundary component.

We will now analyze covers of the following basic manifolds:
\begin{enumerate}
\item 

The vector spaces relevant for a $G$-equivariant topological field theory are given by evaluations of
the TFT functor on $G$-covers of the circle $S^1$.

\item
The multiplicative structure on the vector spaces underlying a topological field theory comes
from the 3-punctured sphere, the so-called pair-of-pants.
To set the stage for the discussion in section 4, we consider covers of the $n$-punctured
sphere.

\end{enumerate}

\subsubsection{Covers of the circle}\label{sssect:covcirc}
For any element $g\in G$, we introduce the
principal $G$-bundle $P_g$ of $S^1$ with total space
$P_g:=\mathbb{R}\times G/(t+2\pi,h)\sim(t,hg)$ and distinguished point $[0_\mathbb{R},1_G]$.
With respect to this point, the monodromy of the bundle is given by $g$. One easily checks that
every principal $G$-bundle over $S^1$ is isomorphic to $P_g$ for some $g\in G$. 

Given a finite $G$-set $\X$, we define for every $g\in G$ a 
$|\X|$-fold cover of $S^1$ with total space
$E_g:=\mathbb{R}\times\X/(t+2\pi,x)\sim (t,g^{-1}x)$. The following lemma is straightforward: 

\begin{lemma}\label{lemma:EgPg}
For any $g\in G$, the covers $E_g$ and $\X\times_GP_g$ over $S^1$ are isomorphic.
\end{lemma}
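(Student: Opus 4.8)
The plan is to exhibit an explicit isomorphism of covers and to verify it is a fiberwise bijection over $S^1$. Writing $[t,h]$ for the class of $(t,h)\in\mathbb{R}\times G$ in $P_g$ and $[x,p]$ for the class of $(x,p)$ in $\X\times_GP_g$, I would define the map
\be
\Phi:\mathbb{R}\times\X\to\X\times_GP_g,\qquad \Phi(t,x):=[x,[t,1_G]].
\ee
The first task is to check that $\Phi$ descends to the quotient $E_g=\mathbb{R}\times\X/\!\sim$; concretely, one must show that $\Phi(t+2\pi,x)=\Phi(t,g^{-1}x)$, matching the defining relation $(t+2\pi,x)\sim(t,g^{-1}x)$ of $E_g$.

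The descent computation is the crux and is short. In $P_g$ the defining relation $(t+2\pi,h)\sim(t,hg)$ gives $[t+2\pi,1_G]=[t,g]$, and since $P_g$ carries the left $G$-action $k\cdot[t,h]=[t,kh]$ (which is well defined because left multiplication commutes with the right-multiplication gluing), one may rewrite $[t,g]=g\cdot[t,1_G]$. Feeding this into the associated-bundle relation $[x,h\,p]=[h^{-1}x,p]$ with $h=g$ and $p=[t,1_G]$ yields
\be
\Phi(t+2\pi,x)=[x,g\cdot[t,1_G]]=[g^{-1}x,[t,1_G]]=\Phi(t,g^{-1}x),
\ee
exactly as required. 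Hence $\Phi$ induces a well-defined smooth map $\bar\Phi:E_g\to\X\times_GP_g$, which by construction commutes with the projections to $S^1=\mathbb{R}/2\pi\mathbb{Z}$ (both send the class of $(t,x)$ to $t\bmod 2\pi$).

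It then remains to see that $\bar\Phi$ is an isomorphism of covers, which I would argue fiberwise over a point $[t_0]\in S^1$ with $t_0\in[0,2\pi)$. On the source, the generating relation identifies $(t_0,x)$ with $(t_0',x')$ only when $t_0'-t_0\in 2\pi\mathbb{Z}$, so this fiber consists of the $|\X|$ distinct points $\{[t_0,x]:x\in\X\}$; on the target, the fiber of $P_g$ over $[t_0]$ is a free $G$-torsor, so every class can be brought via $[x,h\,p]=[h^{-1}x,p]$ to the form $[y,[t_0,1_G]]$, giving the $|\X|$ distinct points $\{[y,[t_0,1_G]]:y\in\X\}$. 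Since $\bar\Phi$ sends $[t_0,x]\mapsto[x,[t_0,1_G]]$, it is a bijection on each fiber, and a projection-preserving continuous bijection of covers of $S^1$ is automatically a local diffeomorphism, hence an isomorphism of covers. The one point demanding care throughout is the bookkeeping of conventions: the right-multiplication gluing of $P_g$, the left $G$-action, and the associated-bundle relation $[g^{-1}x,p]=[x,gp]$ each involve $g$ versus $g^{-1}$, and it is precisely their interplay that produces the monodromy $x\mapsto g^{-1}x$ on the fiber. As a sanity check, one may note that $E_g$ and $\X\times_GP_g$ both realize this same monodromy permutation, which by the classification of finite covers of $S^1$ already forces them to be isomorphic.
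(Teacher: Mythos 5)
Your proof is correct: the map $\Phi(t,x)=[x,[t,1_G]]$ descends to $E_g$ exactly because of the interplay you identify between the right-multiplication gluing of $P_g$, the left $G$-action $k\cdot[t,h]=[t,kh]$, and the associated-bundle relation $[x,gp]=[g^{-1}x,p]$, and the fiberwise bijection argument (plus the fact that a projection-preserving bijection of covers is automatically an isomorphism) completes the claim. The paper itself offers no proof --- it simply declares the lemma ``straightforward'' --- so your write-up supplies precisely the explicit verification the authors left to the reader, with the $g$-versus-$g^{-1}$ bookkeeping done correctly; the closing remark that both covers realize the monodromy $x\mapsto g^{-1}x$ is a valid alternative one-line proof via the classification of covers of $S^1$.
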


As a closed one-dimensional manifold, the total space $E_g$ is a disjoint union of circles. We describe the connected components of
$E_g$:

\begin{lemma}\label{lem:orbits}
For any element $g\in G$ there is a one-to-one correspondence 
between the connected components of the manifold $E_g$ and the 
orbits of $\X$ under the action of the cyclic group 
$\langle g\rangle$.
\end{lemma}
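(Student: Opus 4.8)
The plan is to work with the explicit model $E_g=(\mathbb{R}\times\X)/\!\sim$ rather than with $\X\times_G P_g$; by Lemma~\ref{lemma:EgPg} the two covers are isomorphic, and the former description exhibits the $\langle g\rangle$-action on $\X$ directly. First I would observe that the defining relation $(t+2\pi,x)\sim(t,g^{-1}x)$ is precisely the orbit relation of the free $\mathbb{Z}$-action on $\mathbb{R}\times\X$ generated by the homeomorphism $\sigma\colon(t,x)\mapsto(t-2\pi,g^{-1}x)$, whose iterates are $\sigma^{n}(t,x)=(t-2\pi n,g^{-n}x)$. Thus $E_g=(\mathbb{R}\times\X)/\mathbb{Z}$ and the quotient map $q$ is a covering. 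Since the connected components of $\mathbb{R}\times\X$ are the strips $\mathbb{R}\times\{x\}$, and $\sigma$ sends the strip over $x$ to the strip over $g^{-1}x$, the $\mathbb{Z}$-orbits of strips are exactly the $\langle g\rangle$-orbits of $\X$ (using $\langle g^{-1}\rangle=\langle g\rangle$).

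Next I would package this into a continuous surjection $\Phi\colon E_g\to\X/\langle g\rangle$ onto the (discrete) set of $\langle g\rangle$-orbits, defined by $\Phi[t,x]:=\langle g\rangle\cdot x$. This is well defined because $[t,x]=[t',x']$ forces $x'=g^{-n}x$ for some $n$, so $x$ and $x'$ lie in a common $\langle g\rangle$-orbit, and it is continuous because the target is discrete. As $\X/\langle g\rangle$ is discrete, each connected component of $E_g$ is sent to a single orbit, so the components refine the fibres of $\Phi$. The entire statement therefore reduces to the claim that every fibre $\Phi^{-1}(\mathcal{O})=q(\mathbb{R}\times\mathcal{O})$ is connected.

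To prove connectedness of the fibre over an orbit $\mathcal{O}$, fix a base element $x_0\in\mathcal{O}$ and let $d$ be the size of $\mathcal{O}$, i.e.\ the smallest positive integer with $g^{d}x_0=x_0$. Writing each $x\in\mathcal{O}$ uniquely as $x=g^{-k}x_0$ with $0\le k<d$, I would exhibit the explicit map $[t,g^{-k}x_0]\mapsto t+2\pi k \pmod{2\pi d}$ onto $\mathbb{R}/2\pi d\,\mathbb{Z}\cong S^1$. A direct check using the relation $(t+2\pi,g^{-k}x_0)\sim(t,g^{-(k+1)}x_0)$ shows this map is well defined, bijective, and a local diffeomorphism, so each fibre is a single circle, hence connected; as a by-product this recovers the fact that $E_g$ is a disjoint union of circles. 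Consequently $\Phi$ induces the desired bijection between the connected components of $E_g$ and the $\langle g\rangle$-orbits of $\X$.

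I expect the one genuinely substantive step to be the connectedness of the fibres: a priori the $d$ strips $\mathbb{R}\times\{g^{-k}x_0\}$ making up $\mathbb{R}\times\mathcal{O}$ could survive as several components in the quotient, and one must verify that the gluing relation links them into a single cycle of length $d$. The explicit homeomorphism onto $\mathbb{R}/2\pi d\,\mathbb{Z}$ settles this; alternatively one may argue directly that any two of these strips are joined by a path in $E_g$ crossing the identification locus, so that $q(\mathbb{R}\times\mathcal{O})$ is path-connected.
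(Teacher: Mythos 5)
Your proposal is correct, but it takes a genuinely different route from the paper's own argument. The paper proves the lemma by path lifting: every connected component of $E_g$ contains a point $[0,x]$ over the base point of $S^1$; lifting the loop that winds once around $S^1$ carries $[0,x]$ to $[0,g^{-1}x]$, and concatenating such lifts shows that the points over $0$ lying in the component of $[0,x]$ are exactly the $[0,g^kx]$, whence components correspond to $\langle g\rangle$-orbits. You avoid path lifting altogether: you identify $E_g$ as the quotient of $\mathbb{R}\times\X$ by the free $\mathbb{Z}$-action generated by $(t,x)\mapsto(t-2\pi,g^{-1}x)$, map $E_g$ continuously onto the discrete orbit set $\X/\langle g\rangle$ so that components refine fibres, and then prove each fibre connected by exhibiting it explicitly as the circle $\mathbb{R}/2\pi d\,\mathbb{Z}$ via $[t,g^{-k}x_0]\mapsto t+2\pi k$; your well-definedness and bijectivity checks are sound (in fact, once you note that $[t',g^{-n}x]=[t'+2\pi n,x]$, the fibre over an orbit is the image of a single strip $\mathbb{R}\times\{x_0\}$, so connectedness also follows in one line without the parametrization). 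The trade-off between the two approaches: the paper's lifting technique is precisely the one that scales to the later, harder statements --- lemma \ref{lem:poporbits} on covers of the pair-of-pants and lemma \ref{lem:actioncomponents} are established by the same scheme with additional lifted paths $\alpha,\beta$ --- whereas your explicit parametrization does not extend so directly, since those components are surfaces of higher genus (cf.\ lemma \ref{lem:genus}). In return, your argument yields for free the finer statement that each component of $E_g$ is a circle covering $S^1$ with degree equal to the orbit size, information which the paper obtains separately (the decomposition into circles from the classification of closed $1$-manifolds, and sheet counts as in lemma \ref{lem:poporbits}).
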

\void{
\begin{proof}
Any connected component of $E_g$ contains a point of the form $[0,x]$ for some $x\in\X$. Let $\gamma:[0,1]\to S^1$ be $\gamma(t)=[2\pi t]\in \mathbb{R}/2\pi\mathbb{Z}\cong S^1$ the path that winds once around $S^1$. Viewing $E_g$ as a cover of $S^1$ we can lift $\gamma$ to a path $\hat\gamma_x:[0,1]\to E_g$ with initial point $\hat\gamma_x(0)=[0,x]$. But then $\hat\gamma_x(1)=[2\pi,x]=[0,g^{-1}x]$. Lifting $\gamma$ to $\hat\gamma_{g^{-1}x}$ allows us to consider a path $\hat\gamma_{g^{-1}x}\star\hat\gamma_x$ that connects the points $[0,x], [0,g^{-1}x]$ and $[0,g^{-2}x]$ of $E_g$. Repeating this procedure yields all points over zero that are contained in the connected component of $[0,x]$, all of these have the form $[0,g^kx]$. Hence the connected component of $E_g$ containing $[0,x]$ gives the $\langle g\rangle$-orbit of $\X$ containing $x$. The choice of another point $[0,x']$ in the same connected component gives the same orbit as then $x'=g^kx$.

Conversely let $o\in O_g$ be an orbit of $\X$. Choose any $x\in o$. This defines a point $[0,x]$ and hence a connected component of $E_g$. By the above arguments this is independent of the precise choice of $x$, as $g^kx$ gives a point in the same connected component.
\end{proof}
}
We denote the set of orbits of $\X$ under the action of the cyclic group $\langle g\rangle\subset G$  by 
$O_g$;  let $b_g:=|O_g|$ be the number of orbits.

The following lemma follows from an easy calculation, as well.
\begin{lemma}\label{lem:orbitmap}
The map
\be
\bearll
E_g&\to E_{hgh^{-1}}\\[0pt]
[t,x]&\mapsto [t,hx]
\eear
\ee
is an isomorphism of covers of $S^1$. The induced map on the 
sets of connected components is given by the map
\be
\bearll
O_g&\to O_{hgh^{-1}}\\
o&\mapsto ho.
\eear
\ee
between the sets of orbits of cyclic groups.
\end{lemma}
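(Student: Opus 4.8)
The plan is to verify directly that the prescription $[t,x]\mapsto[t,hx]$ descends to a well-defined smooth map, that it commutes with the projections to $S^1$, and that it admits a smooth inverse; the statement about orbits then follows by combining this with Lemma \ref{lem:orbits}.

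First I would establish well-definedness. Recall that $E_g$ is the quotient of $\mathbb{R}\times\X$ by the relation generated by $(t+2\pi,x)\sim(t,g^{-1}x)$. Applying the rule $(t,x)\mapsto(t,hx)$ to the two sides of this relation gives $(t+2\pi,hx)$ and $(t,hg^{-1}x)$. In $E_{hgh^{-1}}$ the defining identification reads $(t+2\pi,y)\sim(t,(hgh^{-1})^{-1}y)$, and since $(hgh^{-1})^{-1}=hg^{-1}h^{-1}$ one has $(t+2\pi,hx)\sim(t,hg^{-1}h^{-1}\cdot hx)=(t,hg^{-1}x)$. Thus the images of the two identified points are again identified, so the map descends to $E_g\to E_{hgh^{-1}}$. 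This bookkeeping with the conjugation and the inverse is the only place where care is required: the left action combined with the $g^{-1}$ appearing in the gluing relation makes it easy to transpose an inverse, so I would check this computation carefully.

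Next I would observe that the map is a morphism of covers of $S^1$: it preserves the $t$-coordinate and therefore commutes with the projections $[t,x]\mapsto[t]$ onto $S^1=\mathbb{R}/2\pi\mathbb{Z}$. Smoothness is immediate, since $(t,x)\mapsto(t,hx)$ is smooth on $\mathbb{R}\times\X$ (with $\X$ discrete) and both quotient maps are local diffeomorphisms. The same computation applied with $h^{-1}$ in place of $h$ shows that $[s,y]\mapsto[s,h^{-1}y]$ is a well-defined morphism $E_{hgh^{-1}}\to E_g$, and the two composites are visibly the identity; hence the map is an isomorphism of covers.

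Finally, for the induced map on connected components I would invoke Lemma \ref{lem:orbits}, under which the component of $E_g$ through $[0,x]$ corresponds to the $\la g\ra$-orbit of $x$. Since our isomorphism sends $[0,x]$ to $[0,hx]$, the target component corresponds to the $\la hgh^{-1}\ra$-orbit of $hx$. As $\la hgh^{-1}\ra=h\la g\ra h^{-1}$, this orbit equals $\{hg^kh^{-1}\cdot hx:k\in\mathbb{Z}\}=\{hg^kx:k\in\mathbb{Z}\}=h\cdot(\la g\ra x)$, which is exactly $h$ applied to the $\la g\ra$-orbit of $x$. Therefore the induced map $O_g\to O_{hgh^{-1}}$ is $o\mapsto ho$, as claimed. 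No serious obstacle is anticipated; the entire content of the lemma is the conjugation-and-inverse identity checked in the first step.
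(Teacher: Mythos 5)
Your proof is correct, and it supplies exactly the routine verification that the paper omits (the paper merely remarks that the lemma ``follows from an easy calculation''): descent through the gluing relation via the conjugation identity $(hgh^{-1})^{-1}=hg^{-1}h^{-1}$, invertibility via $h^{-1}$, and the orbit identification $\la hgh^{-1}\ra\, hx = h\,\la g\ra x$ combined with Lemma \ref{lem:orbits}. Nothing is missing.
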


\subsubsection{Covers of the $n$-punctured sphere}
\label{sssect:nsphere}

Next, we investigate covers of the $n$-punctured sphere;
to this end, we fix a standard model \cite{BKlego,BK} of this manifold:

\begin{Def}\label{def:stdsphere}
For every $n\in{\mathbb N}$  the \emph{standard sphere} 
$S_{n}$ is the complex sphere
$\overline{\mathbb{C}}$ with standard orientation and with discs of radius $\frac{1}{3}$ centered around the first $n$ positive integers
removed. As marked points on the boundary components of $S_n$, we choose $k-\frac{i}{3}$ for $k=1,\dots,n$.
\end{Def}

We  need $G$-covers of the standard sphere $S_n$; 
as standard models for these
covers, we use the so called \emph{standard blocks} \cite{prince}. 
To construct the standard blocks, we remove from
$S_n$ the straight lines connecting the points $k+\frac{i}{3}$ to the point $\infty$.
The resulting manifold $S_n\!\setminus\!\mbox{cuts}$ is contractible, hence it only has the trivial $G$-cover $((S_n\!\setminus\!\mbox{cuts})\times G\to S_n)$. For any $n$-tuple $g_1,\dots g_n$ of elements in $G$
whose product is the neutral element, we obtain a $G$-cover of $S_n$ by gluing the
$j$-th cut in $(S_n\!\setminus\!\mbox{cuts})\times G$ with the action of $g_j\in G$.
The following picture shows the situation with a view in the direction of the negative imaginary axis:

\be
\raisebox{-20pt}{
\bp(200,150)
\put(0,40){\pic{gluing}}
\put(0,30){$(z,xg_j)$}
\put(85,25){$(z,x)$}
\ep
}
\ee
We finally have to specify marked points on the boundary components of $((S_n\setminus\mbox{cuts})\times G)/\mbox{gluing}$.
To this end, we choose another $n$-tuple $h_1,\dots,h_n$ of elements in $G$
and take as marked points $[k-\frac i3,h_k]$ for $k=1,\ldots n$.
We write $S_n(g_1,\dots,g_n;h_1,\dots,h_n)$ for these marked $G$-covers over 
$S_n$. In fact, any $G$-cover over an $n$-punctured sphere is diffeomorphic
to one $G$-cover of the form
$(S_n(g_1,\dots,g_n;h_1,\dots,h_n)\to S_n)$. These covers have monodromies $h_ig_i^{-1}h_i^{-1}$ around the $i$-th boundary circle of $S_n$.

In the definition of standard blocks, the orientation of a boundary component depends on whether we consider the component
as ingoing or outgoing. For example, for the pair-of-pants with one outgoing circle the
third circle is given a clockwise orientation.
The cover of $S_3$ that is most important in the following discussion is $S_3(g_1,g_2,(g_1g_2)^{-1};1,1,1)$. When orienting the third boundary component as outgoing, this cover has monodromies $g_1^{-1}$ and $g_2^{-1}$ at the ingoing components and $(g_1g_2)^{-1}$ at the outgoing component.
We sometimes abbreviate $E_{g_1;g_2}:=\cover_\X(S_3(g_1,g_2,(g_1g_2)^{-1};1,1,1)\to S_3)$.

To analyze how the structure of $\cover_\X(S_3(g_1,g_2,(g_1g_2)^{-1};1,1,1)\to S_3)$ depends on the group elements $g_1$ and $g_2$, we introduce 
the following paths in the base manifold, the pair-of-pants $S_3$:
\begin{itemize}
\item $\gamma_{g_1}$, the path with winding number one around the ingoing boundary circle which has monodromy $g_1^{-1}$.
\item $\gamma_{g_2}$ the path winding once around the ingoing boundary circle which has monodromy $g_2^{-1}$.
\item $\gamma_{g_1g_2}$ the path winding once around the outgoing boundary circle which has monodromy $(g_1g_2)^{-1}$.
\item $\alpha, \beta$ open paths connecting the base points of the ingoing circles with the base point of the outgoing circle.
\end{itemize}
\be
\raisebox{15pt}{
\bp(180,145)
\pic{pop-paths}
\put(-80,120) {outgoing}
\put(5,10){ingoing, monodromy $g_1^{-1}$}
\put(-175,-12){ingoing, monodromy $g_2^{-1}$}
\put(-35,90){$\gamma_{g_1g_2}$}
\put(-3,46){$\gamma_{g_1}$}
\put(-129,46){$\gamma_{g_2}$}
\put(-85,60){$\alpha$}
\put(-40,60){$\beta$}
\ep
}
\ee
\\
The following lemma describes the connected components of 
$E_{g_1;g_2}$:

\begin{lemma}\label{lem:poporbits}\mbox{}\\[-2em]
\begin{enumerate}
\item[(i)]
There is a natural bijection between the connected 
components of 
$$\cover_\X(S_3(g_1,g_2,(g_1g_2)^{-1};1,1,1)\to S_3)=E_{g_1;g_2}$$ 
and orbits of the $G$-set $\X$ under the action of the subgroup 
$\langle g_1,g_2\rangle\subset G$ of $G$ generated by 
the elements $g_1$ and $g_2$.
\item[(ii)]
By lemma \ref{lemma:EgPg} the restriction of $E_{g_1;g_2}$ to the boundary with monodromy $g_1^{-1}$ is diffeomorphic to $E_{g_1^{-1}}$
and similarly for the other boundaries.
Let $o$ be a $\langle g_1,g_2\rangle$-orbit of $\X$ and write $E_{g_1;g_2}^o$ for the connected component of
$E_{g_1;g_2}$ corresponding to the orbit $o$. The boundary components of $E_{g_1;g_2}^o$ correspond to precisely those orbits of the
cyclic subgroups $\langle g_1\rangle$, $\langle g_2\rangle$ and $\langle g_1g_2\rangle$ that are contained in the orbit $o$
of the group $\langle g_1,g_2\rangle$.

\item[(iii)] In particular, the number of sheets
of the cover $E_{g_1;g_2}^o\to S_3$ is $|o|$.

\end{enumerate}
\end{lemma}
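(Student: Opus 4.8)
The plan is to treat $E_{g_1;g_2}\to S_3$ as a finite covering space of the connected manifold $S_3$ and to use the standard dictionary between connected components of a cover and orbits of the monodromy action of $\pi_1(S_3)$ on a fibre. First I would fix a base point in the interior of $S_3$ and identify the fibre of $E_{g_1;g_2}$ over it with the set $\X$, exactly as in the computation behind Lemma~\ref{lem:orbits}. Since $S_3$ is a pair-of-pants, $\pi_1(S_3)$ is free on the two classes of $\gamma_{g_1}$ and $\gamma_{g_2}$, with the class of $\gamma_{g_1g_2}$ equal to the inverse of their product. By construction of the standard block $S_3(g_1,g_2,(g_1g_2)^{-1};1,1,1)$, the $G$-monodromies along these loops are $g_1^{-1}$, $g_2^{-1}$ and $(g_1g_2)^{-1}$, so, passing to the associated $\X$-cover via the rule underlying Lemma~\ref{lemma:EgPg}, the induced permutation monodromy is the action of $\langle g_1,g_2\rangle$ on $\X$ (the inverses are irrelevant for the orbit structure, since $\langle g_i\rangle=\langle g_i^{-1}\rangle$). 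Hence the orbits of the monodromy action coincide with the $\langle g_1,g_2\rangle$-orbits of $\X$, which proves (i); the bijection is natural in that it sends a connected component to its fibre, viewed as a subset of $\X$.

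Part (iii) is then immediate: the connected component $E_{g_1;g_2}^o$ has fibre equal to the orbit $o\subseteq\X$, so as a cover of $S_3$ it has $|o|$ sheets.

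For (ii) I would restrict the cover to the three boundary circles of $S_3$. By Lemma~\ref{lemma:EgPg} each restriction is one of the circle covers $E_{g_1^{-1}}$, $E_{g_2^{-1}}$, $E_{(g_1g_2)^{-1}}$, whose connected components are, by Lemma~\ref{lem:orbits}, the orbits of the cyclic groups $\langle g_1\rangle$, $\langle g_2\rangle$ and $\langle g_1g_2\rangle$ on $\X$. The inclusion $\iota\colon S^1\hookrightarrow S_3$ of the $i$-th boundary circle induces on fundamental groups the map sending the generator to the class of $\gamma_{g_i}$; choosing a path from the interior base point to the boundary base point lets me identify the two copies of the fibre $\X$ compatibly with this inclusion. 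Under this identification a boundary component over the $i$-th circle is a $\langle g_i\rangle$-orbit $o'\subseteq\X$, and it is contained in the connected component $E_{g_1;g_2}^o$ precisely when $o'$ lies in the $\langle g_1,g_2\rangle$-orbit $o$. Since every $\langle g_i\rangle$-orbit lies in a unique $\langle g_1,g_2\rangle$-orbit and distinct orbits give distinct components, this assignment is a bijection between the boundary components of $E_{g_1;g_2}^o$ and those orbits of the cyclic subgroups that are contained in $o$.

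The main obstacle is the bookkeeping in (ii): one must make the phrase ``a boundary orbit is contained in the bulk orbit'' literally correct by transporting all fibre identifications along fixed connecting paths, so that both the $\langle g_i\rangle$-orbit and the $\langle g_1,g_2\rangle$-orbit are genuine subsets of one and the same copy of $\X$. Once the base points and paths are fixed, the compatibility of the monodromy with the boundary inclusion makes the containment $o'\subseteq o$ automatic, and the remaining verifications are routine. The orientation conventions for the ingoing and outgoing circles only affect whether $g_i$ or $g_i^{-1}$ appears and are immaterial for all the orbit statements.
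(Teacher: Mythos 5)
Your proposal is correct, and it reaches the result by a somewhat different packaging than the paper. The paper's own argument (left largely as an exercise in the text, with the instruction to choose appropriate lifts of $\gamma_{g_1}$, $\gamma_{g_2}$, $\gamma_{g_1g_2}$, $\alpha$ and $\beta$) is the fully explicit version of the path-lifting mechanism: it works with base points on the three boundary circles, lifts the two boundary loops and the connecting paths $\alpha$, $\beta$ by hand, and verifies that the boundary points reachable from a given point $[0,x]$ are precisely those of the form $[0,kx]$ with $k\in\langle g_1,g_2\rangle$, so that parts (i) and (ii) come out of one and the same bookkeeping. You instead invoke the standard covering-space dictionary --- connected components of a cover of a connected base correspond to orbits of the monodromy action of $\pi_1$ on a fibre --- using that $\pi_1(S_3)$ is free on the classes of $\gamma_{g_1}$ and $\gamma_{g_2}$, with monodromies $g_1^{-1}$ and $g_2^{-1}$ acting on $\X$, so the orbits are exactly the $\langle g_1,g_2\rangle$-orbits; this yields (i) and (iii) in one stroke, and you then recover (ii) by restricting to the boundary circles (via lemmas \ref{lemma:EgPg} and \ref{lem:orbits}) and transporting fibre identifications along connecting paths, which is precisely the role played by $\alpha$ and $\beta$ in the paper's explicit lifts. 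The conceptual route makes (iii) and the naturality of the bijection transparent; the paper's hands-on route never needs to name $\pi_1$ or the classification of coverings and delivers the boundary statement (ii) directly from the same computation. Your two normalizing remarks --- that $\langle g_i\rangle=\langle g_i^{-1}\rangle$ so the inverses are harmless, and that the ingoing/outgoing orientation conventions do not affect orbit statements --- are exactly the points one must (and the paper implicitly does) take care of, so there is no gap.
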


This is seen by choosing appropriate lifts of the paths $\gamma_{g_1},\gamma_{g_2},\gamma_{g_1g_2},\alpha$ and $\beta$.
We leave the details to the reader as an exercise.
\void{
\begin{proof}
We abbreviate $E_{g_1;g_2}:=\cover_\X(S_3(g_1,g_2,(g_1g_2)^{-1};1,1,1)\to S_3)$.
Any connected component of $E_{g_1;g_2}$ is connected to a boundary component of $E_{g_1;g_2}$. Given one boundary component of $E_{g_1;g_2}$ we will now determine all other boundary components, that are connected to the given one. We identify the boundaries with $E_{g_1^{-1}}, E_{g_2^{-1}}$ and $E_{(g_1g_2)^{-1}}$ respectively.

Let $x\in\X$. Lift the path $\gamma_{g_1}$ to a path $\hat\gamma_{g_1}$ in $E_{g_1^{-1}}$ with initial point $\hat\gamma_{g_1}(0)=[0,x]$, hence $\hat\gamma_{g_1}(1)=[0,g_1x]$. Similarly we lift $\gamma_{g_2}$ to a path $\hat\gamma_{g_2}$ in $E_{g_2^{-1}}$ with initial point $\hat\gamma_{g_2}(0)=[0,x]$ and $\hat\gamma_{g_2}(1)=[0,g_2x]$, the same for $\gamma_{g_1g_2}$.

Now lift $\alpha$ to $\hat\alpha$ in $E_{g_1;g_2}$ such that $\hat\alpha(0)=[0,x]$ in $E_{g_1^{-1}}$. Then $\hat\alpha(1)=[0,x]$ in $E_{(g_1g_2)^{-1}}$. Lift $\beta$ to $\hat\beta$ in $E_{g_1;g_2}$ such that $\hat\beta(0)=[0,x]$ in $E_{g_2^{-1}}$. Then $\hat\beta(1)=[0,x]$ in $E_{(g_1g_2)^{-1}}$.

Consider the connected component of $E_{g_1;g_2}$ that contains $[0,x]\in E_{g_1^{-1}}\subset\partial E_{g_1;g_2}$. The point $[0,x]$ is connected to the points
\begin{itemize}
\item $[0,g_1^ix]\in E_{g_1^{-1}}$ by repeated lifts of $\gamma_{g_1}$
\item $[0,g_1^ix]\in E_{(g_1g_2)^{-1}}$ by a lift of $\alpha$
\item $[0,g_1^ix]\in E_{g_2^{-1}}$ by a lift of $\beta$
\item $[0,g_1^ig_2^jx]\in E_{g_2^{-1}}$ by repeated lifts of $\gamma_{g_2}$
\item $[0,g_1^ig_2^jx]\in E_{(g_1g_2)^{-1}}$ by a lift of $\beta$
\item $[0,g_1^ig_2^jx]\in E_{g_1^{-1}}$ by a lift of $\alpha$.
\end{itemize}
Repeating this procedure gives points of the form $[0,kx]\in E_{g_1^{-1}}, E_{g_2^{-1}}, E_{(g_1g_2)^{-1}}$ for some $k\in\langle g_1,g_2\rangle$. Conversely for any such $k$ the points $[0,kx]\in E_{g_1^{-1}}, E_{g_2^{-1}}, E_{(g_1g_2)^{-1}}$ are connected via paths in $E_{g_1;g_2}$. Hence the connected component of $E_{g_1,g_2}$ containing $[0,x]\in E_{g_1^{-1}}$ gives an orbit of $\X$ under the action of $\langle g_1,g_2\rangle$. We could have started with the points $[0,x]$ in $E_{g_2^{-1}}$ or $E_{(g_1g_2)^{-1}}$ as well, this would give the same result.

Conversely let $o$ be an orbit of $\X$ under the action of $\langle g_1,g_2\rangle$. Pick $x\in o$. Then the point $[0,x]\in E_{g_1^{-1}}$ determines a connected component of $E_{g_1;g_2}$, by the above arguments this component is independent of the precise choice of $x\in o$.
\end{proof}
}
We write $b_{g_1}^o$ for the number of $\langle g_1\rangle$-orbits that are contained in $o$ and similarly for $g_2$ and $g_1g_2$. 
We can now describe the topology of the connected
components of the cover:

\begin{lemma}\label{lem:genus} 
Let $o$ be a $\langle g_1,g_2\rangle$-orbit on $\X$. Then the component $E_{g_1,g_2}^o$ of 
$$\cover_\X(S_3(g_1,g_2,(g_1g_2)^{-1};1,1,1)\to S_3)$$
is a surface of genus
\be\label{eq:genus}
\frac{2-b_{g_1}^o-b_{g_2}^o-b_{g_1g_2}^o+|o|}{2}
\ee
\end{lemma}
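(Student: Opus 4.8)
The plan is to compute the Euler characteristic of the surface $E_{g_1,g_2}^o$ in two independent ways and then solve for its genus. Since $E_{g_1,g_2}^o$ is a compact connected oriented surface with boundary — oriented by Lemma \ref{lemma:cover-oriented} and connected because it is a single connected component — its Euler characteristic can be written as $\chi(E_{g_1,g_2}^o) = 2 - 2h - b$, where $h$ is the genus we wish to determine and $b$ is the number of boundary circles. Matching this expression against a second computation of $\chi$ will isolate $h$.

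First I would compute $\chi$ via the covering map. The base, the pair-of-pants $S_3$, is a genus-zero surface with three boundary circles, so $\chi(S_3) = 2 - 0 - 3 = -1$. The restriction of $\cover_\X(S_3(g_1,g_2,(g_1g_2)^{-1};1,1,1)\to S_3)$ to the component indexed by $o$ is an honest unbranched covering $E_{g_1,g_2}^o \to S_3$: the cover functor yields a local diffeomorphism over the whole punctured sphere, with no branch points, precisely because the relevant points have already been removed from $S_3$. By Lemma \ref{lem:poporbits}(iii) this covering has $|o|$ sheets, and multiplicativity of the Euler characteristic for unbranched covers gives
\[
\chi(E_{g_1,g_2}^o) = |o|\cdot\chi(S_3) = -|o|.
\]

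Second, I would count the boundary components. By Lemma \ref{lem:poporbits}(ii), the boundary circles of $E_{g_1,g_2}^o$ are in bijection with those orbits of the cyclic groups $\langle g_1\rangle$, $\langle g_2\rangle$ and $\langle g_1g_2\rangle$ that lie inside $o$. In the notation introduced above their number is therefore $b = b_{g_1}^o + b_{g_2}^o + b_{g_1g_2}^o$. Substituting both values into $\chi(E_{g_1,g_2}^o) = 2 - 2h - b$ yields
\[
-|o| = 2 - 2h - \bigl(b_{g_1}^o + b_{g_2}^o + b_{g_1g_2}^o\bigr),
\]
and solving for $h$ reproduces formula \erf{eq:genus}.

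The computation itself is elementary; the only points needing care — and hence the main things to justify — are that $E_{g_1,g_2}^o \to S_3$ is genuinely a covering space, so that $\chi$ simply multiplies by the number of sheets rather than acquiring branch-point corrections, and that the sheet count and boundary count used here are exactly the data supplied by parts (iii) and (ii) of the preceding lemma. Both follow immediately from the construction of $\cover_\X$ over the already-punctured sphere, so no serious obstacle remains.
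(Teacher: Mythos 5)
Your proposal is correct and follows essentially the same route as the paper's own proof: the paper likewise computes $\chi(E_{g_1,g_2}^o) = |o|\cdot\chi(S_3) = -|o|$ by (the unramified case of) Riemann--Hurwitz, takes the boundary count $b_{g_1}^o+b_{g_2}^o+b_{g_1g_2}^o$ from lemma \ref{lem:poporbits}, and solves $\chi = 2-2h-b$ for the genus. Your version is slightly more explicit about why the cover is unbranched and where the sheet and boundary counts come from, but the argument is identical.
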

\begin{proof}
We have an unramified $|o|$-fold cover of the pair-of-pants $S_3$ with
Euler characteristic $\chi(S_3)=2-3=-1$. Hence, by the theorem of Riemann Hurwitz,
$\chi(E_{g_1,g_2}^o) = -|o|$.

 As described in lemma \ref{lem:poporbits}, the number of boundary components of $E_{g_1;g_2}^o$ is $b_{g_1}^o+b_{g_2}^o+b_{g_1g_2}^o$. This implies formula \erf{eq:genus} for the genus of $E_{g_1,g_2}^o$.
\end{proof}

Finally, we consider the special case of 
the cylinder $S_2$ with principal bundle $S_2(g,g^{-1};1,h)$,
where the first boundary is oriented as ingoing and the second 
boundary as outgoing. Then $S_2(g,g^{-1};1,h)$ has monodromies $g^{-1}$ around the ingoing boundary and $hg^{-1}h^{-1}$ around the outgoing boundary. We identify the ingoing boundary of $(S_2(g,g^{-1};1,h)\to S_2)$ with $(P_{g^{-1}}\to S^1,[0,1_G])$ and the outgoing boundary with $(P_{g^{-1}}\to S^1,[0,h])$. The latter is isomorphic to $(P_{hg^{-1}h^{-1}}\to S^1, [0,1_G])$ under the  map $[t,k]\mapsto[t,kh^{-1}]$ of bundles. Hence the boundaries of $\cover_\X(S_2(g,g^{-1};1,h)\to S_2)$ are isomorphic to $E_{g^{-1}}$ and $E_{hg^{-1}h^{-1}}$ respectively.

\begin{lemma}\label{lem:actioncomponents}
The manifold $\cover_\X(S_2(g,g^{-1};1,h)\to S_2)$ is 
a disjoint union of cylinders. These cylinders interpolate
between the connected component of $E_{g^{-1}}$  corresponding to 
the $\langle g\rangle$-orbit $o$ of $\X$  and  the connected component of $E_{hg^{-1}h^{-1}}$ that corresponds to the $\la hg^{-1}h^{-1}\ra=\langle hgh^{-1}\rangle$-orbit $ho$.
\end{lemma}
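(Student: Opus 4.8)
The plan is to establish the two assertions separately: first that every connected component of $M:=\cover_\X(S_2(g,g^{-1};1,h)\to S_2)$ is a cylinder, and then that the two boundary circles of each such component lie over the ingoing and outgoing ends and correspond to the orbits $o$ and $ho$ respectively. For the first assertion I would argue exactly as in Lemma \ref{lem:genus}: the base $S_2$ is an annulus, so $\chi(S_2)=0$, and since each connected component $M^o$ of $M$ is an \emph{unramified} finite cover of $S_2$, the Riemann--Hurwitz formula gives $\chi(M^o)=|o|\cdot\chi(S_2)=0$. A connected compact oriented surface with non-empty boundary and vanishing Euler characteristic satisfies $2-2\gamma-b=0$ with $b\ge 1$, which forces $\gamma=0$ and $b=2$; hence $M^o$ is a cylinder. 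As $S_2$ has boundary, no component can be closed, so $M$ is a disjoint union of cylinders.

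It remains to track the two ends. By the discussion preceding the lemma the restriction of $M$ to the ingoing boundary is $E_{g^{-1}}$ and to the outgoing boundary is $E_{hg^{-1}h^{-1}}$; by Lemma \ref{lem:orbits} their components are indexed by $\la g\ra$-orbits, resp.\ $\la hg^{-1}h^{-1}\ra=\la hgh^{-1}\ra$-orbits, of \X. I would now determine, for a fixed $x\in\X$, which end-component the point $[0,x]\in E_{g^{-1}}$ is joined to through the interior of the cylinder. Working in the standard block, the $G$-bundle over $S_2\setminus\mbox{cuts}$ is trivial, so the associated $\X$-cover is $(S_2\setminus\mbox{cuts})\times\X$ and lifting an open path $\delta$ from the ingoing base point to the outgoing base point (avoiding the cut) preserves the $\X$-label. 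Translating the endpoints of the lift of $\delta$ back into the normal-form covers $E_{g^{-1}}$ and $E_{hg^{-1}h^{-1}}$ is the crucial bookkeeping step: the ingoing marked point carries the group label $h_1=1$, the outgoing marked point carries $h_2=h$, and the outgoing boundary is brought to standard form only after applying the bundle isomorphism $[t,k]\mapsto[t,kh^{-1}]$ recalled before the lemma. Carrying the label $x$ through these identifications shows that $[0,x]\in E_{g^{-1}}$ is connected to $[0,hx]\in E_{hg^{-1}h^{-1}}$, which is precisely the isomorphism of Lemma \ref{lem:orbitmap} applied with $g$ replaced by $g^{-1}$.

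From this the orbit statement follows immediately. The component of $M$ containing $[0,x]$ has ingoing boundary circle equal to the component of $E_{g^{-1}}$ through $[0,x]$, i.e.\ the $\la g\ra$-orbit $o$ of $x$, and outgoing boundary circle equal to the component of $E_{hg^{-1}h^{-1}}$ through $[0,hx]$, i.e.\ the $\la hg^{-1}h^{-1}\ra$-orbit $h\,\la g\ra x=ho$. Since each component is a cylinder with exactly one boundary circle on each end, this exhibits the asserted interpolation between $o$ and $ho$.

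I expect the main obstacle to be exactly the middle step: keeping the marked-point conventions and the twisting bundle isomorphism $[t,k]\mapsto[t,kh^{-1}]$ straight, so that the transported label comes out as $hx$ rather than $x$. It is this $h$-twist, and nothing else, that distinguishes the orbit $ho$ from $o$ and makes the two ends carry genuinely different $\la g\ra$- and $\la hgh^{-1}\ra$-actions; the cylinder and Euler-characteristic parts are routine once Lemma \ref{lem:genus} is in place.
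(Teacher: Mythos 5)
Your proof is correct and takes essentially the same route as the paper: the paper likewise lifts a path connecting the base points of the two boundary circles (its path $\alpha$, your $\delta$) and uses the identification of the outgoing boundary with $E_{hg^{-1}h^{-1}}$ via the map $[t,x]\mapsto[t,hx]$ of lemma \ref{lem:orbitmap} to conclude that $[0,x]\in E_{g^{-1}}$ is joined to $[0,hx]$, hence that the component interpolates between $o$ and $ho$. The only difference is that you justify the ``disjoint union of cylinders'' statement explicitly by Riemann--Hurwitz as in lemma \ref{lem:genus}, a point the paper leaves implicit.
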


\begin{proof}
Consider the following paths in the base cylinder $S_2$:
\begin{itemize}
\item $\gamma_g$ the path winding once around the ingoing boundary circle which has monodromy $g^{-1}$.
\item $\gamma_{hg^{-1}h^{-1}}$ the path winding once around the outgoing boundary circle which has monodromy $hg^{-1}h^{-1}$.
\item $\alpha$ a path connecting the base points of both boundary circles.
\end{itemize}

Let $x\in o$ be an element of the $\langle g\rangle$-orbit $o$. The point $[0,x]$ in $E_{g^{-1}}$ is connected to the points $[0,g^{i}x]$ of $E_{g^{-1}}$ by repeated lifts of $\gamma_g$, similar to the proof of lemma \ref{lem:poporbits}. 
By lifting $\alpha$ to a path $\hat\alpha_{g^{i}x}$ in $\cover_\X(S_2(g,g^{-1};1,h)\to S_2)$ with initial point $\hat\alpha_{g^{i}x}(0)=[0,g^{i}x]\in E_{g^{-1}}$, these points are connected to the points $[0,hg^{i}x]\in E_{hg^{-1}h^{-1}}$, where the map from lemma \ref{lem:orbitmap} is used to identify the outgoing boundary of $\cover_\X(S_2(g,g^{-1};1,h)\to S_2)$ with $E_{hg^{-1}h^{-1}}$. 
These again are connected by lifts of $\gamma_{hg^{-1}h^{-1}}$ only connected to points of the same form. 
By lemma \ref{lem:orbits}, the connected component of $E_{hg^{-1}h^{-1}}$ containing these points, corresponds to the $\langle hg^{-1}h^{-1}\rangle$-orbit $ho$.
\end{proof}


\subsection{Equivariant Frobenius algebras from equivariant topological field
theories}\label{sec:2dalg}

In this subsection, we review the equivariant generalization of the correspondence \cite{kock}
between two-dimensional topological field theories and commutative Frobenius algebras to set the stage for the discussion of $G$-equivariant modular functors.
We then present a decategorified version of the main
construction of this paper.

\subsubsection{From $G$-equivariant TFTs to $G$-Frobenius algebras}
\label{sec:tfttofrob}

We start by recalling definitions from 
\cite{mose, turaevhomotopy}

\begin{Def}\label{def:gfrob}
A \emph{$G$-Frobenius algebra} (or crossed $G$ Frobenius algebra or Turaev algebra) is a $G$-graded associative unital algebra $A=\bigoplus_{g\in G}A_g$ together with a group homomorphism $\alpha:G\to \Aut(A)$ such that
\begin{enumerate}
\item The $G$ action is compatible with the $G$-grading via the adjoint action of $G$ on itself,
$\alpha_h:A_g\to A_{hgh^{-1}}$.

\item The restriction of $\alpha_h$ to $A_h$ is the identity.
\item $A$ is twisted commutative: For all $a\in A_g, b\in A_h$ we have $\alpha_h(a)b=ba$.
\item There is a $G$-invariant trace $\epsilon: A_1\to \Bbbk$ such that the induced pairing $A_g\otimes A_{g^{-1}}\stackrel{m}{\to}A_1\stackrel{\epsilon}{\to}  \Bbbk$ is non-degenerate.
\item For all $g,h\in G$ we have
\be
\sum\alpha_h(\xi_i)\xi^i=\sum\eta_i\alpha_g(\eta^i)\in A_{hgh^{-1}g^{-1}}
\ee
where $(\xi_i,\xi^i)$ and $(\eta_i,\eta^i)$ are pairs of dual bases of $A_g,A_{g^{-1}}$ and $A_h,A_{h^{-1}}$ respectively.
\end{enumerate}
We call $A_g$ the $g$-graded component and $A_1$ the neutral
component.
A morphism of $G$-Frobenius algebras is a morphism of unital algebras that 
respects the trace, the $G$-action and the grading. One verifies that all morphisms
of $G$-Frobenius algebras are isomorphisms.
\end{Def}

The following theorem \cite{mose, turaevhomotopy}  holds:

\begin{thm}\label{thm:gtft-gfrob}
The symmetric tensor categories of $G$-TFTs and $G$-Frobenius algebras 
are equivalent.
\end{thm}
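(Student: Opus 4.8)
The plan is to establish the equivalence by a careful generalization of the non-equivariant argument of Kock \cite{kock}, exploiting the fact that \gtwocob\ admits a finite generators-and-relations presentation just as \cobtwo\ does. First I would observe that a $G$-TFT is a symmetric monoidal functor $\gtft:\gtwocob\to\vectk$, so the functor is determined up to isomorphism by its values on the generating objects and morphisms of \gtwocob. The generating objects are the $G$-covers of the circle; by Lemma \ref{lemma:EgPg} and the discussion in \ref{sssect:covcirc} these are classified by group elements $g\in G$, giving one object $P_g$ per $g$. Setting $A_g:=\gtft(P_g\to S^1)$ and $A:=\bigoplus_{g\in G}A_g$ produces the underlying $G$-graded vector space. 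The multiplication, unit, comultiplication and counit come from applying $\gtft$ to the covers of $S_3$, $S_2$ (caps), and the disc that we have been analyzing: the pair-of-pants cover $S_3(g_1,g_2,(g_1g_2)^{-1};1,1,1)$ yields a map $A_{g_1}\otimes A_{g_2}\to A_{g_1g_2}$, fixing the grading of the product.

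Next I would produce the two remaining pieces of structure. The $G$-action $\alpha:G\to\Aut(A)$ is obtained from the mapping cylinders $S_2(g,g^{-1};1,h)$: by Lemma \ref{lem:actioncomponents} the cover of such a cylinder interpolates between the $\langle g\rangle$-orbit component of $E_{g^{-1}}$ and the $\langle hgh^{-1}\rangle$-orbit component of $E_{hg^{-1}h^{-1}}$, so $\gtft$ of this cobordism gives a map $A_g\to A_{hgh^{-1}}$, which is exactly axiom (1) of Definition \ref{def:gfrob}. Functoriality under gluing of these cylinders gives the homomorphism property $\alpha_{h}\alpha_{h'}=\alpha_{hh'}$, and the cylinder with $h\in\langle g\rangle$ being isotopic to the identity cover yields axiom (2). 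The trace $\epsilon:A_1\to\Bbbk$ is $\gtft$ applied to the disc capping off $P_1$, and non-degeneracy of the pairing in axiom (4) follows from the $S^1\times[0,1]$ zig-zag (snake) identity, exactly as in the non-equivariant case.

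The heart of the proof is matching the relations in \gtwocob\ with the Frobenius axioms, and here I expect the main obstacle to lie. Axioms (1)–(4) are comparatively routine translations of well-known cobordism relations decorated with $G$-covers, but \textbf{axiom (5)} — the torus relation $\sum_i\alpha_h(\xi_i)\xi^i=\sum_j\eta_j\alpha_g(\eta^j)$ in $A_{hgh^{-1}g^{-1}}$ — is genuinely subtle. It arises from evaluating $\gtft$ on the $G$-cover of a once-punctured torus with two distinct decompositions into pairs-of-pants and cylinders, and the two sides of the equation correspond to the two ways of cutting that surface. Verifying it requires showing that the two pasting diagrams of covers are diffeomorphic as $G$-covers of the punctured torus, a computation controlled by Lemmas \ref{lem:poporbits} and \ref{lem:genus} (the genus jump reflected in formula \erf{eq:genus}); one must track dual bases of $A_g$ and $A_h$ through the monodromies, and the insertion of $\alpha_h$ and $\alpha_g$ is precisely the bookkeeping of the holonomy around the two handles.

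Finally I would assemble these into a functor in each direction. Given a $G$-Frobenius algebra, the presentation of \gtwocob\ lets one define a symmetric monoidal functor by sending generators to the corresponding structure maps; the Frobenius axioms guarantee all relations are respected, so the functor is well defined. The two constructions are mutually inverse up to natural isomorphism, and they are visibly compatible with the symmetric monoidal structures (disjoint union of covers corresponds to tensor product of algebras), giving the claimed equivalence of symmetric tensor categories. For the detailed presentation of \gtwocob\ and the verification that morphisms of $G$-Frobenius algebras are forced to be isomorphisms, I would appeal to \cite{mose, turaevhomotopy}, where this equivalence is originally established.
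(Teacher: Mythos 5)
Your plan coincides with the paper's treatment of this theorem: the paper does not prove it but cites \cite{mose,turaevhomotopy}, recalling only the extraction of the $G$-Frobenius data from a $G$-TFT (circle covers give the graded components, the standard block $S_3(g,h,(gh)^{-1};1,1,1)$ gives the product, discs with trivial cover give unit and counit, the marked-point-shifted cylinder $S_2(g,g^{-1};1,h)$ gives the $G$-action), which is exactly your dictionary; your further sketch --- matching the remaining axioms to relations in \gtwocob, with axiom (5) arising from the two pants decompositions of the $G$-covered once-punctured torus --- is the standard argument of those same references, to which you also defer for the presentation of \gtwocob, just as the paper does.

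Two small corrections. The paper defines $A_g:=\gtft(P_{g^{-1}}\to S^1,[0,1_G])$, not $\gtft(P_g)$, precisely so that the block $S_3(g,h,(gh)^{-1};1,1,1)$, whose ingoing monodromies are $g^{-1},h^{-1}$ and whose outgoing monodromy is $(gh)^{-1}$, yields $m_{g,h}:A_g\otimes A_h\to A_{gh}$; with your convention the same cobordism gives a product $A_g\otimes A_h\to A_{hg}$, so you would have to flip either the grading or the multiplication. Also, the lemmas you invoke (Lemmas \ref{lem:actioncomponents}, \ref{lem:poporbits} and \ref{lem:genus}) are statements about the associated covers $\X\times_GP$ for a finite $G$-set $\X$ and play no role in this general theorem; the geometric input your argument actually needs --- that $G$-covers of the cylinder, the pair of pants and the punctured torus are classified by monodromies and marked-point shifts --- is the content of the standard blocks $S_n(g_1,\dots,g_n;h_1,\dots,h_n)$ of subsection \ref{sssect:nsphere}.
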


Instead of reviewing the complete proof (see \cite{turaevhomotopy} or \cite{mose}
with slightly different conventions), 
we recall how to extract the data of a $G$-Frobenius algebra from a $G$-TFT
$\gtft$.

\begin{itemize}
\item 
For $g\in G$, the $g$-graded component is defined as
$$A_g:=\gtft(P_{g^{-1}}\to S^1,[0,1_G]) \,\,\, , $$
where $P_g$ is the principal $G$-bundle on $S^1$ introduced 
subsection \ref{sssect:covcirc}.

\item
For any pair of group elements $g,h\in G$ consider the 
standard three-point block 
$$(S_3(g,h,(gh)^{-1};1,1,1)\to S_3)$$ 
as a cobordism
\be
(P_{g^{-1}}\to S^1,[0,1_G])\sqcup (P_{h^{-1}}\to S^1,[0,1_G])\longrightarrow (P_{(gh)^{-1}}\to S^1,[0,1_G])
\ee
Its image under the functor $\gtft$ is a morphism 
$m_{g,h}:A_g\otimes A_h\to A_{gh}$ which yields an 
associative product 
$\sum_{g,h\in G}m_{g,h}$ on the $G$-graded vector space 
$A=\bigoplus_{g\in G}A_g$.

\item 
Unit $\eta$  and counit $\epsilon$ of $A$ are obtained from cobordisms with the topology of a
disc $D$. Since the disc is contractible, it only admits the trivial cover
$D\times G\to D$ which restricts unit and counit to be trivial on $A_g$ 
for $g\neq 1$.

The unit $\eta$ of $A$ is obtained as $\gtft(D\times G\to D)$
with the disc viewed as a cobordism from the empty set to 
$(P_1\to S^1,[0,1_G])$. Hence $\eta:\Bbbk\to A_1$. 
Similarly we define the counit as $\gtft(D\times G\to D)$, where this time 
the disc is seen as a cobordism from $(P_1\to S^1, [0,1_G])$ to the empty set.

\item
We finally obtain the action of $h\in G$ on $A_g$ from the
cover 
$$S_2(g,g^{-1};1,h)\to S_2$$ 
of the cylinder where the marked point over the outgoing boundary
has been shifted by $h\in G$. The discussion in subsection \ref{sssect:nsphere}
shows that
the boundaries are isomorphic to the bundles $(P_{g^{-1}}\to S^1,[0,1_G])$ and 
$(P_{hg^{-1}h^{-1}}\to S^1, [0,1_G])$ respectively, hence $\alpha_h$ has the 
correct domain and target.
\end{itemize}

We refer to \cite{mose} for a detailed proof that this endows
$A=\bigoplus_{g\in G} A_g$ with the structure of a $G$-Frobenius algebra.

\subsubsection{Permutation equivariant Frobenius algebras}
\label{sssect:permequivFrob}

We now describe a construction that can be seen as the
decategorified version of the main construction of
this article. Suppose we are given a finite ordered $G$-set $\X$ and a 
commutative Frobenius algebra $(R,\eta, m, \epsilon, \Delta)$, 
playing the 
role of a decategorification of a modular tensor category. 
A Frobenius algebra structure on an associative unital algebra
$(R,m,\eta)$ can be equivalently described by a linear form $\epsilon$ 
such that the induced 
bilinear pairing on $R$ is non-degenerate
or by a coalgebra structure
$(R,\Delta,\epsilon)$ such that $\Delta$ is a morphism of
$R$-bimodules. Here, we prefer the latter description.
We want to
to construct a $G$-Frobenius algebra $A$ with neutral component
$A_1=\bigotimes_{\X}R$, where the $G$-action on
$A_1$ is induced by the $G$-action on  $\X$. 

We start by constructing the underlying $G$-graded vector
space:

\begin{itemize}
\item
Composing the $2$-dimensional topological field theory
associated to $R$
\be
\tftR:\cobtwo\to \vectk.
\ee
with the tensor functor $\cover_\X$, we obtain by corollary 
\ref{cor:covertft} a $2$-dimensional $G$-equivariant TFT,
$$ \tftR^\X:= \tftR\circ\cover_\X\ \,\,\, . $$

\item
To describe the $G$-Frobenius algebra $A=\bigoplus_{g\in G}A_g$ that corresponds 
to this $G$-TFT, we first describe the vector spaces $A_g$ for $g\in G$.
By lemma \ref{lemma:EgPg}, we have 
$\cover_\X(P_{g^{-1}}\to S^1,[0,1_G])\cong E_{g^{-1}}$; since the
functor $\tftR$ is monoidal, we only need to know the number of connected 
components of $E_{g^{-1}}$ which by lemma \ref{lem:orbits} is
the number $b_g$ of orbits of the cyclic group $\langle g\rangle$
on \X. Thus, as a vector space,
\be
A_g\cong \tftR(E_{g^{-1}})\cong\tftR(\sqcup_{o\in O_g}S^1)\cong R^{\otimes b_g} \cong
\bigotimes_{o\in O_{g}}R_o
\,\, ,
\ee
with $R_o\cong R$ as a vector space. Hence an element of $A_g$ is a linear 
combination of elements of the form $r_{o_1}\otimes\dots\otimes r_{o_{b_{g}}}$ 
with $r_{o_i}\in R$. 
\end{itemize}

The product morphisms $m_{g_1,g_2}:A_{g_1}\otimes A_{g_2}\to A_{g_1g_2}$
are induced by the covers 
$$(S_3(g_1,g_2,(g_1g_2)^{-1};1,1,1)\to S_3)$$ 
of the three-punctured sphere. 
On these covers the cover functor $\cover_\X$ gives
the surfaces $E_{g_1;g_2}$.

By lemma \ref{lem:poporbits} each $\langle g_1,g_2\rangle$-orbit 
$o$ on $\X$ gives a connected component $E_{g_1;g_2}^o$ of 
$E_{g_1;g_2}$ and thus a contribution to the product 
morphism.
We describe these contributions separately and write the elements 
in $A_{g_1}$ as products of elements $r_{o_i'}\in R$ and the elements in 
$A_{g_2}$ as products of elements $s_{o_i''}\in R$.

\begin{itemize}
\item First multiply all elements $r_{o'}$ and $s_{o''}$ for all $\langle g_1\rangle$-orbits $o'$ and the $\langle g_2\rangle$-orbits $o''$ that are contained in the $\langle g_1,g_2\rangle$-orbit $o$.
No choices are involved, 
because the Frobenius algebra $R$ is commutative.

\item By lemma \ref{lem:genus}, the genus of $E_{g_1;g_2}^o$ equals
$p:=\frac{2-b_{g_1}^o-b_{g_2}^o-b_{g_1g_2}^o+|o|}{2}$. In a second step,
apply the endomorphism $(m\circ\Delta)^p$ of $R$ to the product of the previous step.

\item Let $o_1,\dots, o_k$ be those orbits of the cyclic group
$\langle g_1\cdot g_2\rangle$ that are contained in the 
$\langle g_1,g_2\rangle$-orbit $o$. To the element of $R$ obtained in the 
previous step, apply the $k$-fold coproduct of $R$, so we get an element in 
$R_{o_1}\otimes\dots\otimes R_{o_k}$. This element is well defined by 
coassociativity of $R$.

\item Map the factors $R_{o_i}$ of the previous step to the corresponding 
factors of $A_{g_1g_2}$. No choices are involved,  
since the coproduct on the Frobenius algebra $R$ is
cocommutative.

\end{itemize}

This provides the prescription for the product on the $G$-Frobenius
algebra. As explained in subsection \ref{sec:tfttofrob},
the unit of $A$ is obtained as the evaluation of the functor 
$\tftR^\X$ on the 
disc with the trivial $G$-cover. The cover functor maps $(D\times G\to D)$ to the disjoint union
$\bigsqcup_{x\in\X}D$ of $|\X|$-many discs, hence the unit 
of the $G$-Frobenius algebra $A$ is just the tensor product
of the units 
$\bigotimes_{x\in\X}\eta:\Bbbk\to A_1\cong\bigotimes_{x\in\X}R$
of $R$. Similarly 
we find that the counit of $A$ is given  by tensor product
of the counits of $R$.

From lemma \ref{lem:actioncomponents} we deduce that the $G$-action 
$\alpha_h:A_g\to A_{hgh^{-1}}$ is given by the permutation of factors: 
The connected components of the cover $\cover_\X(S_2(g,g^{-1};1,h))$ of the
cylinder are again cylinders. For any $\langle g\rangle$-orbit $o$ of $\X$ 
the factor $R_o$ of $A_g$ is thus mapped to the factor $R_{ho}$ of 
$A_{hgh^{-1}}$.


\section{$G$-modular functors and $G$-equivariant ribbon categories}
\label{sec:GMF}

Let $\C$ be a modular category over a field $\Bbbk$; we assume
$\C$ to be strict. Let $\X$ be a finite ordered $G$-set.
The goal of this section is to construct for any pair $(\C,\X)$ 
a $G$-equivariant modular functor. Our construction is based
on the decategorified version of the construction in section \ref{sec:2dalg}.

\subsection{Definitions and notation}

We start by recalling some definitions \cite{kpGMF}:

\begin{Def}
A \emph{$G$-equivariant category} is an abelian category \CG with the following structure:
\begin{itemize}
\item A decomposition $\CG\cong\bigoplus_{g\in G}\CG_g$ into full abelian subcategories. 
\item A $G$-action covering the adjoint action of $G$ on itself. \\
In more detail, we have for any group element $g\in G$ a 
functor $R_g:\CG\to \CG$ and for any pair $g,h\in G$ of
group elements isomorphisms $\alpha_{g,h}:R_g\circ R_h\Rightarrow R_{gh}$ such that $R_1=\mbox{Id}_{\CG}$, $R_g(\CG_h)\subset\CG_{hgh^{-1}}$.
The isomorphisms $\alpha_{g,h}$ are required to satisfy an 
associativity condition.
\end{itemize}
\end{Def}

As a shorthand, we introduce the notation ${}^gV\equiv R_g(V)$ for $g\in G$ and $V$ in $\CG$.

\begin{Def}
Let \CG be a $G$-equivariant category. 
We assume from now on that $\CG$ is enriched over the category of 
finite-dimensional $\Bbbk$-vector spaces. We denote by $\boxtimes$ the 
Deligne tensor product of $\Bbbk$-linear categories.
An object $\R\in\CG\boxtimes\CG$ is called a {\em gluing object} if
\begin{itemize}
\item $\R$ is of vanishing total degree, 
$\R\in\bigoplus_h\CG_h\boxtimes\CG_{h^{-1}}$.
\item $\R$ is symmetric, i.e. $\R\cong\R^{op}$. Here $\R^{op}$ is obtained by the permutation action on the
       two factors.
\item $\R$ is $G$-invariant: For every group element $g\in G$ there is an isomorphism $(R_g\boxtimes R_g)(\R)\cong \R$.
\item These isomorphisms are compatible with each other.
\end{itemize}

We write $\R_h$ for the component of $\R$ in 
$\CG_h\boxtimes\CG_{h^{-1}}$; sometimes we
use the Sweedler-like notation $\R=\R^{(1)}\boxtimes\R^{(2)}$.
\end{Def}

We are now in a position to give the definition \cite{kpGMF} of a $G$-modular functor. To make the
notation less cumbersome, we sometimes use the abbreviation
$P\equiv(P\to M,\{p_a\}_{a\in A(M)})$ for $G$-covers.

\begin{Def}
Let \CG a $G$-equivariant category enriched over the category of finite-dimensional
$\Bbbk$-vector spaces and $\R$ be a gluing object for $\CG$. A \emph{\CG-extended $G$-equivariant modular functor} consists of the following data:
\begin{enumerate}
\item Functors for $G$-covers: \\
For every $G$-cover $(P\to M,\{p_a\}_{a\in A(M)})$ of an extended surface a functor
\be
\tau^G(P\to M,\{p_a\}_{a\in A(M)}):\bigboxtimes_{a\in A(M)}\CG_{m_a^{-1}(M)}\to\vect \,\, ,
\ee
where $m_a$ is the monodromy of $P$ around the $a$-th boundary component of $M$. 
We will often write $\tau^G(P;\{V_a\})$ for the value of the
functor on a family $\{V_a\}$ of suitable objects.

\item Functorial isomorphisms for morphisms of $G$-covers: \\
For every isomorphism $f:(P\to M,\{p_a\}_{a\in A(M)})\to (P'\to M',\{p'_a\}_{a\in A(M')})$ 
of extended surfaces a functorial isomorphism 
$$f_\ast: \tau^G(P\to M,\{p_a\}_{a\in A(M)})\to\tau^G(P'\to 
M',\{p'_a\}_{a\in A(M')})$$ 
that depends only on the isotopy class of $f$.
\item Isomorphisms $\tau^G(\emptyset)\cong\Bbbk$ and $\tau^G (P\sqcup P')\cong \tau^G(P)\otimes_\Bbbk\tau^G(P')$.
\item Functorial gluing isomorphisms: \\
Let $(P\to M,\{p_a\}_{a\in A(M)})$ be a $G$-cover of an extended surface and let $\alpha,\beta\in A(M),\alpha\neq\beta$ such that
the monodromies are inverse,
$m_\alpha=m_\beta^{-1}$. Then gluing of $P$ along the boundary components over $\alpha$ and $\beta$ is well defined. We have functorial \emph{gluing isomorphisms}
\be
G_{\alpha,\beta}:\tau^G(P;\{V_a\},\R_{\alpha,\beta})\stackrel{\sim}{\to}\tau^G(\sqcup_{\alpha,\beta}P;\{V_a\})
\ee
where $\R_{\alpha,\beta}$ indicates that the summand $\R_{m_\alpha}$ of $\R$ is assigned to the boundary components $\alpha$ and $\beta$ respectively. This is well defined by symmetry of $\R$.
Here $\sqcup_{\alpha,\beta}M$ is the surface with the 
boundary components $\alpha$ and $\beta$ glued.
$\sqcup_{\alpha,\beta}P$ denotes the $G$-cover of 
$\sqcup_{\alpha, \beta}M$ that is obtained by gluing the 
corresponding boundary components over $\alpha$ and $\beta$.
\item Equivariance under the $G$-action: \\
For any $G$-cover $(P\to M,\{p_a\}_{a\in A(M)})$ and any tuple of group elements
${\bf g}=(g_a)_{a\in A(M)}\in G^{A(M)}$ we have functorial isomorphisms
\be
T_{\bf g}:\tau^G(P\to M,\{p_a\}_{a\in A(M)};\{V_a\})\stackrel{\sim}{\to}\tau^G(P\to M,\{g_ap_a\}_{a\in A(M)};\{{}^{g_a}V_a\})\,\,.
\label{17}
\ee
\end{enumerate}
These data are subject to the following conditions:
\begin{itemize}
\item $(f\circ g)_\ast=f_\ast\circ g_\ast$ and $\mbox{id}_\ast=\mbox{id}$.
\item All morphisms are functorial in $(P\to M,\{p_a\}_{a\in A(M)})$ and compatible with each other.
\item When identifying $\R\cong\R^{op}$ we have $G_{\alpha,\beta}=G_{\beta,\alpha}$.
\item Normalization: $\tau^G(S^2\times G\to S^2)\cong\Bbbk$
\end{itemize}
\end{Def}

\begin{rem}
Specializing to the trivial group, $G=\{1\}$ and suppressing the morphisms $T_{\bf g}$ from \erf{17} implementing
equivariance, we recover the usual definition of a modular functor, see e.g.\ \cite{BK}. 
\end{rem}

For our purposes, the notion of a $G$-equivariant monoidal structure
\cite{turaevhomotopy3d,kirillov,kpGMF} 
will be important. 

\begin{Def}\label{def:Gtensor}\mbox{}\\
\begin{enumerate}
\item
A \emph{$G$-equivariant monoidal category} is a semisimple $G$-equivariant category \CG with a monoidal 
structure that is compatible with the grading, i.e.\ $X\otimes Y\in\CG_{gh}$ for $X\in \CG_g, Y\in \CG_h$
and for which the functors $R_g$ implementing equivariance 
are endowed with the structure of tensor functors.

\item
A $G$-equivariant monoidal category is called \emph{braided}, if for any pair of objects
$X\in \CG_g, Y\in \CG_h$ there are isomorphisms 
$$C_{X,Y}:X\otimes Y\to {}^gY\otimes X$$ 
that satisfy two $G$-equivariant hexagon axioms.

\item
An object $V$ in a $G$-equivariant monoidal category $\CG$ has a 
\emph{weak dual} if there is an object $V^\ast\in\CG$ representing the
functor $\hom_{\CG}(\unit,V\otimes ?)$. This amounts to the existence of
functorial isomorphisms $\hom_{\CG}(\unit,V\otimes T)\cong\hom_{\CG}(V^\ast,T)$ 
for all $T\in\CG$.

\item
A $G$-equivariant monoidal category is called \emph{weakly rigid} if every object has a weak dual. 
It is called \emph{rigid}, if there are compatible duality morphisms.

\item
A $G$-equivariant monoidal category is called \emph{weakly ribbon} if it is weakly rigid, braided and for every object $V\in\CG_g$ there is a functorial isomorphism $\Theta_V:V\to {}^gV$, satisfying certain 
coherence conditions spelled out in \cite[Section 2]{kirillov}. 
A weakly ribbon category is called a \emph{ribbon category}, if it is rigid rather than only weakly rigid.
\end{enumerate}
\end{Def}

The discussion in the following subsection \ref{ssect:Gmodcov} strongly uses the following theorem from \cite{kpGMF}:

\begin{thm}
A genus $0$ \CG-extended $G$-modular functor is equivalent to the structure of a $G$-equivariant weakly ribbon category on \CG. 
\end{thm}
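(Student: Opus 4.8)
The plan is to establish the equivalence by exhibiting mutually inverse constructions in both directions, adapting the non-equivariant correspondence between genus $0$ modular functors and weakly ribbon categories of \cite{BK} to the $G$-decorated setting. The guiding principle throughout is that each piece of the weakly ribbon structure is read off from the modular functor $\tau^G$ evaluated on a distinguished genus $0$ surface, while the coherence axioms are encoded by relations in the mapping class groupoid of punctured spheres.

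First I would reconstruct the monoidal structure. Applying $\tau^G$ to the standard pair-of-pants $S_3(g,h,(gh)^{-1};1,1,1)$ yields a functor $\CG_g\boxtimes\CG_h\boxtimes\CG_{gh}\to\vect$, the gradings being forced by the monodromies $g^{-1},h^{-1},(gh)^{-1}$ together with the convention $\CG_{m_a^{-1}}$ attached to each boundary. Using the weak duals supplied by the gluing object $\R$ together with the gluing isomorphisms $G_{\alpha,\beta}$, representability produces a bifunctor $\otimes:\CG_g\times\CG_h\to\CG_{gh}$, compatible with the grading. The associativity constraint then comes from the two pants decompositions of the four-punctured sphere: gluing two copies of $S_3$ in the two standard ways gives two functors that $G_{\alpha,\beta}$ identifies, and the resulting associator satisfies the pentagon axiom by the compatibility of the gluing isomorphisms on the five-punctured sphere. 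The unit is $\tau^G$ on the disc, exactly as in the decategorified discussion of subsection \ref{sec:tfttofrob}.

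Next I would extract the braided and ribbon data, which is where the equivariance genuinely enters. The half-braid mapping class of the sphere interchanging the two ingoing punctures carries a boundary of monodromy $h^{-1}$ once around a boundary of monodromy $g^{-1}$, conjugating its monodromy; composing the induced functorial isomorphism with the equivariance morphism $T_{\mathbf g}$ of \erf{17} produces precisely the expected braiding $C_{X,Y}:X\otimes Y\to{}^gY\otimes X$ for $X\in\CG_g$. The two $G$-equivariant hexagon axioms follow from the braid relations on the three- and four-punctured spheres. The twist $\Theta_V:V\to{}^gV$ for $V\in\CG_g$ arises in the same way from the Dehn twist around a boundary circle of monodromy $g$, whose once-around rotation shifts the marked point by $g$ and is therefore intertwined with $R_g$ via $T_{\mathbf g}$; its coherence with braiding and duality follows from the relations among Dehn twists and braidings on punctured spheres, notably the lantern relation on the four-holed sphere. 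Weak rigidity is immediate from representability, since $\R$ represents the pairing and hence $\hom_{\CG}(\unit,V\otimes ?)$ is representable by a weak dual $V^\ast$.

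Conversely, given a $G$-equivariant weakly ribbon structure I would define $\tau^G$ on an arbitrary genus $0$ $G$-cover by choosing a pants decomposition, assigning the corresponding iterated $\hom$-space built from $\otimes$, the braiding, the twist and $\R$, and accounting for the $G$-action at each marked point through the maps $T_{\mathbf g}$. The crucial point, and the step I expect to be the main obstacle, is to show that this assignment is independent of the chosen decomposition and of the chosen path in the mapping class groupoid. This reduces to verifying that the coherence axioms of a $G$-equivariant weakly ribbon category, namely the equivariant pentagon, the two equivariant hexagons, and the twist coherence, coincide exactly with the defining relations of the genus $0$ mapping class groupoid of $G$-covers, so that any two decompositions yield canonically isomorphic values. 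The bulk of the technical work is the careful bookkeeping of how the group-element labels, the conjugation action on monodromies under gluing, and the morphisms $T_{\mathbf g}$ interact; once this matching of relations is in place, that the round-trip in both directions is the identity follows by construction.
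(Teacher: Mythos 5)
First, a point of comparison: the paper itself gives \emph{no} proof of this statement---it is imported verbatim from \cite{kpGMF} and used as a black box. The only ingredient the paper works out on its own is the one \cite{kpGMF} leaves implicit, namely the tensor structure on the equivariance functors $R_h$, obtained via the chain of isomorphisms \erf{eq:tensorf} and lemma \ref{prop:tensorf}. So your proposal can only be measured against the cited proof, and at the level of strategy it does reproduce it correctly: it is the genus-zero correspondence of \cite{BK}, equivariantized---tensor product and weak duals via representability from pair-of-pants blocks and the gluing object $\R$, associator from the two pants decompositions of the four-punctured sphere, braiding and twist from lifts of the half-braid and of the boundary rotation, corrected by the equivariance isomorphisms $T_{\bf g}$ of \erf{17}.

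As a proof, however, your proposal has two genuine gaps. First, by definition \ref{def:Gtensor} a $G$-equivariant (weakly ribbon) monoidal structure includes tensor-functor constraints ${}^h(A\otimes B)\to{}^hA\otimes{}^hB$ on the functors $R_h$, compatible with the rest of the data; your construction never produces these. This is not a routine omission: producing them requires combining the marked-point shifts $T_{\bf g}$, the cover diffeomorphisms $\tilde k$ of \erf{18} and a factorization---exactly the content of \erf{eq:tensorf}---and their coherence (lemma \ref{prop:tensorf}) must then be verified; this is precisely the piece the present paper had to supply because \cite{kpGMF} left it implicit. Second, and more seriously, your converse direction rests on the assertion that the equivariant pentagon, hexagons and twist coherence ``coincide exactly'' with the defining relations of the genus-zero mapping class groupoid of $G$-covers. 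No presentation of that groupoid by generators and relations is exhibited or invoked; establishing one is the $G$-equivariant analogue of the Lego--Teichm\"uller theorem of \cite{BKlego} and constitutes the technical heart of the cited proof, not a verification that can be deferred. Moreover the dictionary is not a literal coincidence of axiom lists: the groupoid presentation has its own generators (F-, Z- and B-moves, plus, equivariantly, moves shifting marked points within the fibre) and relations, and matching these to the categorical axioms is where the work lies. A smaller slip: twist coherence (balancing) corresponds to the relation on the three-holed sphere expressing the Dehn twist around the outer boundary through the two inner twists and the double braiding, not to the lantern relation.
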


In \cite{kpGMF} no explicit prescription is given how to obtain from a 
\CG-extended $G$-modular functor $\tau^G$ the 
structure morphisms endowing the equivariance functors $R_g:\CG\to\CG$ with the structure 
of  tensor functors. We will need the explicit form of these structure morphisms
and therefore explain them in some detail.

In the definition in subsection \ref{sssect:nsphere} of standard blocks 
on  the $n$-punctured sphere $S_n$ as a quotient of $S_n\!\setminus\!\mbox{cuts}\times G$
a point of the total space $S_n(g_1,\dots,g_n;h_1,\dots, h_n)$ is an equivalence class $[z,x]$ 
with $z\in S_n\!\!\setminus\!\!\mbox{cuts}$ and $x\in G$. For every group element $k\in G$ 
the map $[z,x]\mapsto [z,xk]$ induces an isomorphism of $G$-covers
\be
\tilde k: \,\,\, S_n(g_1,\dots,g_n;h_1,\dots, h_n)\to S_n(k^{-1}g_1k,\dots, k^{-1}g_nk;h_1k,\dots,h_nk)
\label{18}\ee

The corresponding natural transformations enter in the
construction of the tensoriality constraints.
To construct these morphisms,
let $h\in G$ and $A\in\CG_{g_1}$ and $B\in\CG_{g_2}$ be objects of \CG. 
The main step is to construct a natural isomorphism between the functors
$\C\to\vect_{\Bbbk}$ given by
\be
X\mapsto \tau^G(S_2(hg_2^{-1}g_1^{-1}h^{-1},hg_1g_2h^{-1};1,1);X,{}^h(A\otimes B))
\label{eq:funct1}
\ee
and
\be X\mapsto
 \tau^G(S_2(hg_2^{-1}g_1^{-1}h^{-1},hg_1g_2h^{-1};1,1);X,{}^hA\otimes{}^hB) \,\, .
\label{eq:funct2}
\ee
Since our categories are, by assumption, semi-simple with
finitely many isomorphism classes of simple objects, every
functor is representable. In \cite{kpGMF} the objects 
${}^h(A\otimes B)$ and ${}^hA\otimes{}^hB$ have been introduced as the
objects representing the functors (\ref{eq:funct1}) and (\ref{eq:funct2})
respectively.
Thus, by the Yoneda lemma the 
image of the identity for $X={}^h(A\otimes B)$ under the
natural isomorphism
gives an isomorphism
$$\phi_{A,B}^h:\,\,\, {}^h(A\otimes B)\to {}^hA\otimes{}^hB \,\, .$$

The natural transformation we use is given by
\be\label{eq:tensorf}
\begin{array}{ll}
&\tau^G(S_2(hg_2^{-1}g_1^{-1}h^{-1},hg_1g_2h^{-1};1,1);X,{}^h(A\otimes B))\\
\stackrel{T_{(1,h^{-1})}}{\to}&\tau^G(S_2(hg_2^{-1}g_1^{-1}h^{-1},hg_1g_2h^{-1};1,h^{-1});X,A\otimes B)\\
\stackrel{(\tilde h)_\ast}{\to}&\tau^G(S_2(g_2^{-1}g_1^{-1},g_1g_2;h,1);X,A\otimes B)\\
\stackrel{G^{-1}}{\to}&\tau^G(S_2(g_2^{-1}g_1^{-1},g_1g_2;h,1);X,\R^{(1)})\otik\tau^G(S_2(g_2^{-1}g_1^{-1},g_1g_2;1,1);\R^{(2)},A\otimes B)\\
\defn&\tau^G(S_2(g_2^{-1}g_1^{-1},g_1g_2;h,1);X,\R^{(1)})\otik\tau^G(S_3(g_2^{-1}g_1^{-1},g_1,g_2;1,1,1);\R^{(2)},A, B)\\
\stackrel{G}{\to}&\tau^G(S_3(g_2^{-1}g_1^{-1},g_1,g_2;h,1,1);X,A,B)\\
\stackrel{(\tilde h^{-1})_\ast}{\to}&\tau^G(S_3(hg_2^{-1}g_1^{-1}h^{-1},hg_1h^{-1},hg_2h^{-1};1,h^{-1},h^{-1});X,A,B)\\
\stackrel{T_{(1,h,h)}}{\to}&\tau^G(S_3(hg_2^{-1}g_1^{-1}h^{-1},hg_1h^{-1},hg_2h^{-1};1,1,1);X,{}^hA,{}^hB)\\
\defn&\tau^G(S_2(hg_2^{-1}g_1^{-1}h^{-1},hg_1g_2h^{-1};1,1);X,{}^hA\otimes{}^hB)
\end{array}
\ee
for any $X$ in $\CG_{hg_2^{-1}g_1^{-1}h^{-1}}$. The idea
in the definition is to use the equivariance isomorphisms
defined in equation (\ref{17}) to shift the $G$-action
from objects in the category to geometric quantities.
Then a factorization is applied to be able to use the
definition of the tensor product and finally, the $G$-action
is shifted again to objects.

The following observation follows from the compatibility of 
all occurring morphisms and the definition of the 
associativity constraints in $\C$.

\begin{lemma}\label{prop:tensorf} \mbox{}\\
The morphisms $\phi_{A,B}^h$ endow the functor $R_h$ with the
structure of a monoidal functor.
\end{lemma}

Next, we adapt the discussion of \cite[prop. 5.3.13]{BK} of conditions ensuring that a weakly
ribbon category is a ribbon category to the $G$-equivariant case.
Suppose $\CG$ is a weakly rigid $G$-equivariant category in which biduals of objects can be
functorially identified with the objects, $(V^\ast)^\ast\cong V$ for all $V$ in \CG.
This condition is fulfilled for all categories with tensor
structure obtained from a modular functor.
The image of the identity on $V^\ast$ under the functorial isomorphisms
of definition \ref{def:Gtensor}.3 provides us
with a morphism $i_V:\unit\to V\otimes V^\ast$ for any object $V\in \CG$.
Consider for a simple object $V$ of \CG the morphism
\be
\alpha_{V^\ast,V,V^\ast}^{-1}\circ (\id_{V^\ast}\otimes i_{V}):
V^\ast\to(V^\ast\otimes V)\otimes V^\ast
\ee
constructed from $i_V$ and the associativity constraint $\alpha$ of
$\CG$. Since $\CG$ is semisimple,
we can decompose $V^\ast\otimes V\cong\bigoplus_jV_j$ into a direct sum of
simple objects $V_j$. Multiplicities can occur, but since  
$\dim_\Bbbk\hom_{\CG}(\unit,V^\ast\otimes V)=1$
we can decompose the morphism 
\be
\alpha_{V^\ast,V,V^\ast}^{-1}\circ (\id_{V^\ast}\otimes i_V)
=a_V\otimes\id_{V^\ast}+\sum\psi_{j}
\ee
into a morphism $a_V:\unit\to V^\ast\otimes V$ in the one-dimensional vector space
$\hom(1,V^\ast\otimes V)$ and certain morphisms $\psi_{j}$. By the same arguments as in
\cite{BK}, we have 

\begin{prop}\label{prop:conditionrigid}
The category \CG is rigid, if and only if $a_V\neq 0$ for 
all simple objects $V$.
\end{prop}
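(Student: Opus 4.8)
The plan is to reduce the rigidity of $\CG$ to the two snake (zig-zag) identities for each simple object, and to recognize the displayed morphism $\alpha_{V^\ast,V,V^\ast}^{-1}\circ(\id_{V^\ast}\otimes i_V)$ as exactly one half of such an identity, so that its decomposition measures precisely the obstruction. First I would take $i_V:\unit\to V\otimes V^\ast$ as candidate coevaluation $b_V$ and, for each simple $V$, look for an evaluation $d_V:V^\ast\otimes V\to\unit$ making the pair $(i_V,d_V)$ into rigid duality data; by semisimplicity it suffices to treat simple objects and extend by direct sums. After the identification $V^\ast\cong\unit\otimes V^\ast$, the morphism in question is precisely $\id_{V^\ast}\otimes b_V$ rebracketed, so the second snake identity reads $(d_V\otimes\id_{V^\ast})\circ\bigl(\alpha_{V^\ast,V,V^\ast}^{-1}\circ(\id_{V^\ast}\otimes i_V)\bigr)=\id_{V^\ast}$.

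Next I would feed in the decomposition $\alpha_{V^\ast,V,V^\ast}^{-1}\circ(\id_{V^\ast}\otimes i_V)=a_V\otimes\id_{V^\ast}+\sum_j\psi_j$. Writing $V^\ast\otimes V\cong\bigoplus_jV_j$ with the unique summand $\cong\unit$ accounted for by $a_V$, the morphism $d_V$ is supported on that $\unit$-summand, hence annihilates every $\psi_j$ landing in a $V_j\not\cong\unit$; on the remaining term it yields the scalar $d_V\circ a_V\in\hom_{\CG}(\unit,\unit)=\Bbbk$. Thus the snake identity becomes $(d_V\circ a_V)\,\id_{V^\ast}=\id_{V^\ast}$, i.e.\ $d_V\circ a_V=1$. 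Since $\dim_\Bbbk\hom_{\CG}(\unit,V^\ast\otimes V)=1$ and, by semisimplicity, the composition pairing $\hom_{\CG}(V^\ast\otimes V,\unit)\times\hom_{\CG}(\unit,V^\ast\otimes V)\to\Bbbk$ is non-degenerate, such a $d_V$ can be normalized to satisfy $d_V\circ a_V=1$ if and only if $a_V\neq0$. This already gives both implications for this identity: if $\CG$ is rigid the coevaluation is forced to coincide with the representing element $i_V$ of the weak dual, so the existing evaluation gives $d_V\circ a_V=1$ and hence $a_V\neq0$; conversely $a_V\neq0$ lets me construct $d_V$.

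It then remains to promote a single snake identity to full two-sided rigidity. Here I would invoke the standing hypothesis $(V^\ast)^\ast\cong V$: running the same construction for $V^\ast$ produces $i_{V^\ast}:\unit\to V^\ast\otimes V$ together with an evaluation $d_{V^\ast}:V\otimes V^\ast\to\unit$, and $i_{V^\ast}$ spans the same one-dimensional space $\hom_{\CG}(\unit,V^\ast\otimes V)$ as $a_V$. Matching these after normalization identifies the first snake identity for $V$ with the second one for $V^\ast$; since $V^\ast$ is again simple, the hypothesis $a_W\neq0$ for all simple $W$ furnishes both identities simultaneously, and the functorial biduality guarantees the two evaluations are mutually compatible. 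Finally I would check naturality and compatibility with the $G$-structure: the weak dual of an object in $\CG_g$ lies in $\CG_{g^{-1}}$, and the $i_V$ (hence the $d_V$) are equivariant with respect to the functors $R_g$ and the tensor constraints $\phi_{A,B}^h$ of Lemma \ref{prop:tensorf}. Granting these compatibilities, which is where the argument of \cite{BK} is transported to the $G$-equivariant setting, the pairs $(i_V,d_V)$ assemble into a rigid structure on $\CG$.

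The step I expect to be the main obstacle is exactly this passage from the one snake identity controlled by $a_V$ to the second one: one must verify that the evaluation normalized against $a_V$ simultaneously satisfies the other zig-zag relation. This is precisely where the functorial bidual identification $(V^\ast)^\ast\cong V$ is indispensable, and where care is needed to keep all identifications compatible with the $G$-grading and the $G$-action, so that the duality data are genuinely $G$-equivariant rather than merely existing componentwise.
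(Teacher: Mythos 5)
Your first three paragraphs follow the same route as the argument the paper invokes (the adaptation of \cite[prop.\ 5.3.13]{BK}): recognizing the displayed morphism as half of one snake identity, noting that $d_V$ must be supported on the unique $\unit$-summand of $V^\ast\otimes V$, and observing that the normalization $d_V\circ a_V=1$ is possible precisely when $a_V\neq 0$; the converse direction (rigid $\Rightarrow a_V\neq 0$, up to the canonical identification of the rigid dual with the weak dual) is also fine.

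The gap is exactly at the step you flag as the main obstacle, and your proposed fix does not close it. The ``first snake identity for $V$'' with the data $(i_V,d_V)$ involves a coevaluation in $\hom_{\CG}(\unit,V\otimes V^\ast)$ and an evaluation in $\hom_{\CG}(V^\ast\otimes V,\unit)$, whereas the ``second snake identity for $V^\ast$'' involves $i_{V^\ast}\in\hom_{\CG}(\unit,V^\ast\otimes V)$ and $d_{V^\ast}\in\hom_{\CG}(V\otimes V^\ast,\unit)$. These are morphisms in different hom-spaces, so identifying $i_{V^\ast}$ with a scalar multiple of $a_V$ does not convert one identity into the other: what you get by running your construction for $V$ and for $V^\ast$ is one half of a right duality together with one half of a left duality, and two such halves do not assemble into rigidity. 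The missing ingredient is a direct computation, which is what \cite{BK} actually supplies. For simple $V$ set $q:=(\id_V\otimes d_V)\circ\alpha_{V,V^\ast,V}\circ(i_V\otimes\id_V)\in\mathrm{End}(V)$; since $\Bbbk$ is algebraically closed and $V$ is simple, $q=c\,\id_V$ for some $c\in\Bbbk$. The interchange law, applied to the two evaluations in $d_V\circ(\id_{V^\ast}\otimes q)$, together with the already-normalized identity $(d_V\otimes\id_{V^\ast})\circ\alpha^{-1}_{V^\ast,V,V^\ast}\circ(\id_{V^\ast}\otimes i_V)=\id_{V^\ast}$, gives $d_V\circ(\id_{V^\ast}\otimes q)=d_V$, i.e.\ $c\,d_V=d_V$; since $d_V\neq0$, we get $c=1$, so both snake identities for $(i_V,d_V)$ hold. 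Applying the same argument to the simple object $V^\ast$ (using $(V^\ast)^\ast\cong V$) then produces the left duality --- this, rather than repairing the missing snake, is the correct role of biduality in your scheme --- and semisimplicity extends rigidity from simple objects to all of $\CG$. Without this scalar computation your argument does not establish the ``if'' direction; the equivariance and grading compatibilities you worry about at the end are, by contrast, automatic from the grading of the weak dual and need no separate verification.
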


\subsection{$G$-equivariant modular functors from the cover functor}
\label{ssect:Gmodcov}

We fix a finite ordered $G$-set $\X$ and wish to use the 2-dimensional cover functor
\be
\cover_X:\Gext\to\ext
\ee
on extended surfaces to associate to every modular tensor category $\C$ a $G$-equivariant
modular functor.

We fix a modular tensor category $\C$ and choose representatives $(U_i)_{i\in\I}$ 
for the isomorphism classes of simple objects of $\C$ and denote by 
$\theta_i\in\Bbbk$ the eigenvalue of the twist on $U_i$ and by $d_i$ 
the dimension of $U_i$. As usual, we introduce the scalars 
$p^\pm:=\sum_{i\in\I}\theta_i^\pm d_i^2$ and assume that $p^+=p^-$.
Finally, we introduce 
\be \catdim=\sqrt{p^+p^-}=  \sqrt{\sum_i (d_i)^2} \,\, . 
\label{catdim}\ee
With this assumption, the modular category $\C$ defines
(see e.g. \cite{turaevquantum, BK}) a $\C$-extended modular functor $\tau$ 
which acts on objects as
\be
\tau(S_n;V_1,\dots,V_n):=\homc(\unit,V_1\otimes\dots\otimes V_n) \,\, .
\ee

To define a $G$-modular functor, we need a $G$-equivariant category as input.
Given $(\C,\X)$, we define a $G$-equivariant category by transferring the results
of subsection \ref{sec:GTFT}
 to categorical structures.

The first part of data required to define a $G$-modular functor is a 
$G$-equivariant category.
\begin{Def}\label{def:CG}
Let $G$ be a finite group, $\X$ a finite $G$-set and $\C$ a modular tensor category. Then the following data define a
$G$-equivariant category $\CX$:
\begin{enumerate}
\item
For $g\in G$ set 
$$\CX_g:=\C^{\boxtimes b_g} \equiv\C_{o_1}\boxtimes\dots\boxtimes\C_{o_{b_g}}\,\, , $$ 
where $b_g$ is the number of $\langle g\rangle$-orbits $o_i$ of $\X$.
Moreover, $\C_{o_i}\cong\C$ as abelian categories.

\item
For $g,h\in G$ define a functor $R_h:\CX_g\to \CX_{hgh^{-1}}$ by permutation of factors mapping
the factor $\C_o$ of $\CX_g$ to the factor $\C_{ho}$ of $\CX_{hgh^{-1}}$.

The equivariance functors  obey the strict 
identities $R_h\circ R_k=R_{hk}$ for all $h,k\in G$ which allows us to choose trivial composition constraints,
$\alpha_{h,k}=\mbox{id}$ for all $h,k\in G$. 
\end{enumerate}
\end{Def}

The $G$-equivariant category $\CX$ is semisimple and has only finitely many 
isomorphism classes of simple objects. Representatives of
the isomorphism classes of simple objects in $\CX_g$ 
are $U_{i_1}\times\dots\times U_{i_{b_g}}$, where the $U_i$ are representatives 
of the isomorphism classes of simple objects of $\C$. 

This ansatz agrees with the description of simple objects in $\ZZ$-permutation
orbifolds found in \cite{bohs} and the analysis of more general permutation
orbifolds in \cite{Ba2}. We briefly explain  the simplest case of
$\ZZ$-permutation orbifolds: in the orbifold theory, simple
objects in the twisted sector come in pairs which are in correspondence
to simple objects in $\C$. The operation inverse to orbifolding is, in the
situation at hand, a simple current extension with a simple current of
order two. In the twisted sector, simple current ``fixed points'' do not
occur; hence the pair of simple objects gives rise to a single simple object
in the equivariant theory. Indeed, in this specific situation, the generating
element $g$ of $\ZZ$ has a single orbit and thus our ansatz for  the twisted
sector $\CX_g$ as an abelian category is $\C$. The analysis of the untwisted
sector is similar, and the whole analysis can be extended to general
permutation orbifolds.

We next define the gluing object 
\be
\R\in\bigoplus_{g\in G}\CX_g\boxtimes\CX_{g^{-1}} \,\, ;
\ee
its component $\R_g$ in $\CX_g\boxtimes\CX_{g^{-1}}$ is
\be\label{def:R}
\R_g:=\bigoplus_{i_1,i_2\ldots i_{b_g}}(U_{i_{o_1}}\times\dots\times U_{i_{o_{b_g}}})\times (U_{i_{o_1}}^\vee\times\dots\times U_{i_{o_{b_g}}}^\vee)
\ee
where the direct sum is taken over all isomorphism classes 
of simple objects of $\C$. Thus the direct sum in \erf{def:R} 
is taken over representatives of all simple objects of $\C^{\boxtimes b_g}$. 
With this definition, $\R$ is clearly symmetric and $G$-invariant.

Now we are able to define a $\CX$-extended $G$-equivariant modular functor $\tau^\X$. Let $(P\to M,\{p_a\}_{a\in A(M)})$ be a $G$-cover of an extended surface $M$. Consider the surface 
$$\cover_\X(P\to M,\{p_a\}_{a\in A(M)})=\X\times_GP \,\,. $$ 
It can be viewed as the total space of an $|\X|$-fold cover of $M$ and
has, by the discussion in subsection \ref{sec:2dgeom} the
structure of an extended surface.
 Let $a\in A(M)$ be a boundary component of $M$. By lemmas \ref{lemma:EgPg} and \ref{lem:orbits} the restriction of $\X\times_GP$ to $a$ has a connected component for every $\langle m_a\rangle$-orbit of $\X$, where $m_a$ is the monodromy of $P$ around $a$.

\begin{Def}
Let $(P\to M,\{p_a\}_{a\in A(M)})$ be a $G$-cover of an extended surface $M$. Define a functor
\be
\tau^\X(P\to M,\{p_a\}_{a\in A(M)}):\bigboxtimes_{a\in A(M)}\CX_{m_a^{-1}}\to \vectk
\ee
by
\be
\tau^\X(P\to M,\{p_a\}_{a\in A(M)}):=\tau(\cover_\X(P\to M,\{p_a\}_{a\in A(M)}))=\tau(\X\times_GP).
\ee
\end{Def}

This is indeed well-defined: from
$\CX_{m_a^{-1}}=\bigboxtimes_{O_{m_a}}\C$ we conclude that
$$\bigboxtimes_{a\in A(M)}\CX_{m_a^{-1}}
=\bigboxtimes_{a\in A(\X\times_GP)}\C \,\, .$$
Note that the definition of $\tau^\X$ also depends on the 
choice of marked points in $\X\times_GP$.

We will now describe all the additional data that is needed 
to turn $\tau^\X$ into a $\CX$-extended $G$-equivariant 
modular functor and prove that all axioms are satisfied.

\begin{itemize}
\item
Let $f:(P\to M,\{p_a\}_{a\in A(M)})\congto(P'\to M',\{p'_a\}_{a\in A(M')})$ be an isomorphism in $\Gext$. This gives a morphism 
$\bar f:=\cover_\X(f):\X\times_GP\congto\X\times_GP'$ in \ext. By definition, the $\C$-extended modular functor $\tau$ gives an isomorphism $\bar f_\ast:\tau(\X\times_GP)\Rightarrow\tau(\X\times_GP')$ of functors, hence an isomorphism $\bar f_\ast:\tau^\X(P\to M,\{p_a\}_{a\in A(M)})\Rightarrow\tau^\X(P'\to M',\{p'_a\}_{a\in A(M')})$. This isomorphism only depends on the isotopy class of $f$ and it obeys $\overline{(f\circ g)}_\ast=\bar f_\ast\circ\bar g_\ast$.

\item
The $\C$-extended modular functor has enough structure to
provide an isomorphism of functors
\be
\tau^\X(\emptyset)=\tau(\X\times_G\emptyset)=\tau(\emptyset)\cong\Bbbk
\ee
and for $G$-covers $(P\to M,\{p_a\}_{a\in A(M)})$ and $(P'\to M',\{p'_a\}_{a\in A(M')})$
\be
\bearl
\tau^\X(P\sqcup P')=\tau(\X\times_G(P\sqcup P'))\cong \tau((\X\times_GP)\sqcup (\X\times_GP'))\\
\Rightarrow \tau(\X\times_GP)\otimes_\Bbbk\tau(\X\times_GP'))
=\tau^\X(P)\otimes_\Bbbk\tau^\X(P').
\eear
\ee

\item
Next we describe the gluing isomorphisms. Let $(P\to M,\{p_a\}_{a\in A(M)})$ be a $G$-cover of $M$ and let $\alpha,\beta\in A(M)$ be two different boundary components of $M$ with inverse monodromies, $m_\alpha=m_\beta^{-1}$. 
This condition ensures that we can glue $P$ along $\alpha$ 
and $\beta$. For all $a\in A(M)\setminus\{\alpha,\beta\}$, let $W_a$ be an object of $\CX_{m_a^{-1}}$. Then
\be
\tau^\X(P,\{W_a\},\R_{\alpha,\beta})\cong\bigoplus\tau(\X\times_GP, \{W_a\}, U_{i_{o_1}}\times\dots\times U_{i_{o_k}}, 
U_{i_{o_1}}^\vee\times\dots\times U_{i_{o_k}}^\vee).
\ee
Here the $o_i$ are the $\langle m_\alpha^{-1}\rangle$-orbits of $\X$. 
Since the monodromies are inverses, $m_\beta=m_\alpha^{-1}$, 
these orbits are equal to the $\langle m_\beta\rangle$-orbits so that the assignment of the simple objects in the first component of $\R_{m_\alpha^{-1}}$ to the boundary components of $\X\times_GP$ over $\alpha$ and the simple objects in the second component of $\R_{m_\alpha^{-1}}$ to the boundary components over $\beta$ is compatible. The direct sum is taken over all simple objects as in  definition \ref{def:R}, where $\R$ has been introduced.

Since $\tau$ is a modular functor, we get gluing isomorphisms for all boundary components of the cover $\X\times_GP$ over $\alpha$ and $\beta$ by gluing $\X\times_GP$ along each boundary component over $\alpha$ and $\beta$ separately. The gluing isomorphisms of $\tau$ satisfy an associativity 
condition; thus the procedure does not depend on the order 
in which the boundary components are glued. Hence we get gluing isomorphisms
\be
G_{\alpha,\beta}:\tau^\X(P,\{W_a\},\R_{\alpha,\beta})\congto\tau^\X(\sqcup_{\alpha,\beta}P, \{W_a\})
\ee
that are functorial in the objects $W_a$.
The procedure in the definition of $G_{\alpha, \beta}$ relies on the
fact that the cover functor $\cover_\X$ respects gluing of covers.

\item
Next we implement $G$-equivariance.
Let $(P\to M,\{p_a\}_{a\in A(M)})$ be a $G$-cover of an extended surface $M$ with marked points $\{p_a\}$ and let ${\bf{g}}=(g_a)_{a\in A(M)}\in G^{A(M)}$ be a tuple of elements of $G$ for each boundary component of $M$. For all $a\in A(M)$ let $W_a$ be an object in $\CG_{m_a^{-1}}$. 
We need to give functorial isomorphisms
\be\label{eq:Tg}
T_{\bf{g}}:\tau^\X(P\to M,\{p_a\}, \{W_a\})\congto\tau^\X(P\to M, \{g_ap_a\}, \{{}^{g_a}W_a\}).
\ee
implementing the action of ${\bf{g}}$ on the boundary
components.

The boundary component of $(P\to M,\{p_a\})$ 
corresponding to the $a$-th boundary component of $M$
is isomorphic to the cover $(P_{m_a}\to S^1,[0,1_G])$. Now we analyse the situation on the right hand side of \erf{eq:Tg}. The boundary of $(P\to M,\{g_ap_a\})$ is isomorphic to $(P_{m_a}\to S^1,[0,g_a])\cong (P_{g_am_ag_a^{-1}}\to S^1, [0,1_G])$. This
induces precisely the map $E_{m_a}\to E_{g_am_ag_a^{-1}}$
in lemma \ref{lem:orbitmap}. After identifying boundary
components and orbits,
a $\langle m_a\rangle$-orbit $o$ is mapped under this map
to the $\langle g_am_ag_a^{-1}\rangle$-orbit $g_ao$. 
On the other hand, the action of $R_{g_a}$ on 
$\CX_{m_a^{-1}}$ permutes the objects in $\CX_{m_a^{-1}}$ 
in exactly the same way. Hence we can choose the 
equivariance isomorphisms $T_{\bf g}$ to be  identity morphisms.

\item
It follows from functoriality of the cover functor $\cover_\X$ and of $\tau$ that all isomorphisms constructed above are functorial in $(P\to M, \{p_a\})$. 
Similarly, one concludes that all isomorphisms are 
compatible.

\item The $G$-equivariant modular functor $\tau^\X$ is normalized:
\be
\tau^\X(S^2\times G\to S^2)=\tau(\X\times_G(S^2\times G))\cong \tau(\X\times S^2)\cong\Bbbk\otimes\dots\otimes\Bbbk\cong\Bbbk
\ee
\end{itemize}

We summarize these findings in the following theorem, which is one of the 
main results of this paper:

\begin{thm}\label{thm:GMF}
Let $G$ be a finite group, $\X$ a finite ordered $G$-set and \C a $\Bbbk$-linear modular category. 
Denote by $\CX$ the $G$-equivariant category
introduced in definition \ref{def:CG}. Then the functor $\tau^\X$
defined by 
\be
\tau^\X(P\to M, \{p_a\}_{a\in A(M)}):=\tau(\cover_\X(P\to M, \{p_a\}_{a\in A(M)}))
\ee
is a $\CX$-extended $G$-equivariant modular functor.
\end{thm}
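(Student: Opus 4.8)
The plan is to verify, one axiom at a time, every piece of data required in the definition of a \CX-extended $G$-equivariant modular functor, exploiting throughout that $\tau^\X$ is by construction the composite $\tau\circ\cover_\X$ of the symmetric monoidal cover functor $\cover_\X:\Gext\to\ext$ (proposition \ref{coverprop}) with the $\C$-extended modular functor $\tau$ attached to the modular category \C. The guiding principle is that each item of modular-functor structure on $\tau$ is transported along $\cover_\X$, and that $G$-equivariance is implemented \emph{geometrically}, by moving marked points, rather than through any genuinely new algebraic input.

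First I would check that the domains of the functors are correct. For a boundary component $a$ of $M$ with monodromy $m_a$, lemmas \ref{lemma:EgPg} and \ref{lem:orbits} identify the connected components of the restriction of $\X\times_GP$ over $a$ with the $\langle m_a\rangle$-orbits of \X; since $\CX_{m_a^{-1}}=\C^{\boxtimes b_{m_a}}$ by definition \ref{def:CG}, this gives the identification $\bigboxtimes_{a\in A(M)}\CX_{m_a^{-1}}=\bigboxtimes_{A(\X\times_GP)}\C$, so $\tau(\X\times_GP)$ has exactly the right source. Functoriality in morphisms of $G$-covers then follows by setting $\bar f:=\cover_\X(f)$ and using the isomorphism $\bar f_\ast$ coming from $\tau$, with $\overline{(f\circ g)}_\ast=\bar f_\ast\circ\bar g_\ast$ a consequence of functoriality of both $\cover_\X$ and $\tau$. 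The normalization and the monoidality isomorphisms (for $\emptyset$ and for $\sqcup$) are immediate from $\X\times_G(S^2\times G)\cong\X\times S^2$ and from $\cover_\X$ being symmetric monoidal, combined with $\tau(S^2)\cong\Bbbk$ and the monoidal structure of $\tau$.

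The two substantial points are equivariance and gluing. For equivariance I would argue that shifting the marked point over $a$ by $g_a$ replaces $(P_{m_a}\to S^1,[0,1_G])$ by $(P_{m_a}\to S^1,[0,g_a])\cong(P_{g_am_ag_a^{-1}}\to S^1,[0,1_G])$, which on covers is precisely the map $E_{m_a}\to E_{g_am_ag_a^{-1}}$ of lemma \ref{lem:orbitmap} sending the orbit $o$ to $g_ao$; since $R_{g_a}$ permutes the tensor factors of $\CX_{m_a^{-1}}$ in exactly this fashion, the isomorphism $T_{\bf g}$ may be taken to be the identity. For gluing I would use the key property established in proposition \ref{coverprop}, that $\cover_\X$ respects gluing of covers: gluing $P$ along boundary components $\alpha,\beta$ with $m_\alpha=m_\beta^{-1}$ corresponds to simultaneously gluing $\X\times_GP$ along all of its boundary circles lying over $\alpha$ and $\beta$. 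The gluing object \R\ of \erf{def:R} is defined so that its two Sweedler components carry exactly the simple objects to be assigned to these paired circles; the coincidence of $\langle m_\alpha^{-1}\rangle$-orbits with $\langle m_\beta\rangle$-orbits makes the assignment compatible, and applying the gluing isomorphisms of $\tau$ circle-by-circle produces $G_{\alpha,\beta}$.

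I expect the main obstacle to lie in the gluing axiom and in the mutual compatibility of all the constructed isomorphisms. One must check that gluing the many boundary circles of the cover in an arbitrary order yields a well-defined result, which rests on the associativity of the gluing isomorphisms of $\tau$; that $G_{\alpha,\beta}=G_{\beta,\alpha}$ follows from the symmetry of \R; and that the gluing, morphism and equivariance isomorphisms are all functorial in $(P\to M,\{p_a\})$ and coherent with one another. The latter follows formally from the corresponding coherences for $\tau$ together with the functoriality of $\cover_\X$, but the orbit-to-component bookkeeping is where the argument is genuinely delicate and deserves the most care.
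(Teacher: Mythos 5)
Your proposal is correct and follows essentially the same route as the paper: transporting each piece of modular-functor structure of $\tau$ along the cover functor $\cover_\X$, identifying boundary components of $\X\times_GP$ with orbits via lemmas \ref{lemma:EgPg}, \ref{lem:orbits} and \ref{lem:orbitmap}, taking $T_{\bf g}$ to be the identity, and building $G_{\alpha,\beta}$ by gluing the cover circle-by-circle using the associativity of the gluing isomorphisms of $\tau$ and the fact that $\cover_\X$ respects gluing. The paper's own verification proceeds point-by-point in exactly this way, so there is nothing to add.
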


This implies in particular:
\begin{cor}
\begin{enumerate}
\item The $G$-equivariant modular functor $\tau^\X$ induces
a $G$-equivariant monoidal structure on 
the $G$-equivariant category $\CX$.

\item The equivariant modular functor $\tau^\X$ induces
on the $G$-equivariant monoidal category $\CX$ the
structure of a weakly fusion category.
Since the $G$-modular functor in theorem \ref{thm:GMF} is 
defined for arbitrary genus, we expect this structure to 
be $G$-modular, so that orbifold theories exist.
Unfortunately, $G$-equivariant modular functors for higher 
genus are not yet well understood.

\item By restriction, the $G$-equivariant functor
$\tau^\X$ endows for any group element
$g\in G$ the abelian category   $\C^{\boxtimes b_g}$
with the structure of a module category 
over the tensor category $\C^{\boxtimes\X}$.
\end{enumerate}
\end{cor}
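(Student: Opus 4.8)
The plan is to derive all three assertions from Theorem~\ref{thm:GMF} together with the structural equivalence of \cite{kpGMF} quoted above, namely that a genus~$0$ $\CG$-extended $G$-modular functor is the same datum as a $G$-equivariant weakly ribbon structure on $\CG$. For part~(1), I would first restrict the modular functor $\tau^\X$ of Theorem~\ref{thm:GMF} to surfaces of genus~$0$; this yields a genus~$0$ $\CX$-extended $G$-modular functor, to which the cited equivalence applies and produces a $G$-equivariant weakly ribbon, and in particular a $G$-equivariant monoidal, structure on $\CX$. Made explicit, since $\CX$ is semisimple the tensor product of $A\in\CX_{g_1}$ and $B\in\CX_{g_2}$ is the object representing the functor $X\mapsto\tau^\X(S_3(g_1,g_2,(g_1g_2)^{-1};1,1,1);X,A,B)$; the grading compatibility $\otimes\colon\CX_{g_1}\boxtimes\CX_{g_2}\to\CX_{g_1g_2}$ is forced by the monodromies of the three-point block, the associativity constraint is read off from the two gluing factorizations of the four-holed sphere, and Lemma~\ref{prop:tensorf} supplies the tensoriality constraints $\phi_{A,B}^h$ turning each equivariance functor $R_h$ into a tensor functor.

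For part~(2), I would assemble the defining properties of a weakly fusion category: semisimplicity together with finiteness of the set of isomorphism classes of simple objects, a simple tensor unit, and weak rigidity in the sense of Definition~\ref{def:Gtensor}. The first were recorded just after Definition~\ref{def:CG}. The tensor unit lives in $\CX_1=\C^{\boxtimes\X}$ and equals $\unit_\C^{\times|\X|}$ (the neutral-sector product being componentwise, as the genus formula of Lemma~\ref{lem:genus} gives genus~$0$ pairs of pants on each singleton orbit), which is simple because $\unit_\C$ is simple in the modular category $\C$. Weak rigidity is part of the weakly ribbon structure produced in part~(1). Taken together these are precisely the axioms of a weakly fusion category, so no further construction is needed. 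If one wished to upgrade weak rigidity to genuine rigidity, the criterion of Proposition~\ref{prop:conditionrigid} applies: since the biduals here are inherited from the strong duality of $\C$ and hence identify functorially with the objects, one would only have to verify $a_V\neq0$ for each simple $V$; this, however, is beyond what the present statement requires.

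For part~(3), I would specialize the monoidal structure of part~(1) to the grading. Because the tensor product respects the grading, it restricts on the neutral component to make $\CX_1=\C^{\boxtimes\X}$ a tensor category, and for each $g\in G$ it restricts to an action functor $\CX_1\boxtimes\CX_g\to\CX_{1\cdot g}=\CX_g$, i.e.\ $\C^{\boxtimes\X}\boxtimes\C^{\boxtimes b_g}\to\C^{\boxtimes b_g}$. The module associativity and unit isomorphisms are the restrictions of the associativity and unit constraints of $\CX$; the module pentagon and triangle are special instances of the coherence axioms already holding in $\CX$, so $\C^{\boxtimes b_g}$ becomes a module category over $\C^{\boxtimes\X}$.

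The \textbf{main obstacle} will be the coherence bookkeeping underlying part~(1) and inherited by parts~(2) and (3): one must check that the associativity constraints extracted from gluing factorizations satisfy the pentagon, that the $\phi_{A,B}^h$ are compatible with associativity and with the strict composition $R_h\circ R_k=R_{hk}$, and that the whole package is genuinely $G$-equivariant. Most of this is absorbed into the cited equivalence of \cite{kpGMF} and into Lemma~\ref{prop:tensorf}; the real task is therefore to confirm that the genus~$0$ restriction of $\tau^\X$ meets the precise hypotheses of that equivalence and that the tensor unit is correctly identified, after which parts~(2) and (3) follow formally.
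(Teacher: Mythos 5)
Your proposal is correct and takes essentially the same route as the paper: there the corollary is stated as an immediate consequence of Theorem \ref{thm:GMF} combined with the quoted equivalence of \cite{kpGMF} between genus-$0$ \CG-extended $G$-modular functors and $G$-equivariant weakly ribbon structures, with part (3) obtained by restricting the grading-compatible tensor product to $\CX_1\boxtimes\CX_g\to\CX_g$, exactly as you describe. The extra details you supply (tensor products as representing objects of three-point blocks, the simple unit $\unit^{\times|\X|}$, Lemma \ref{prop:tensorf} for the tensoriality of $R_h$) coincide with the paper's own explicit constructions in sections 3 and 4.
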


In the next section, we make this structure explicit
in the case of $G=\ZZ$ acting non-trivially on a two-element 
set $\X$. In this case, we can show that the equivariant category is
rigid rather than only weakly rigid. It is also known in this
case \cite{bfrs} that the module category structure on $\C$ over
$\C\boxtimes\C$ describes the permutation modular invariant.

For our construction, it is indispensable to require $\C$
to be modular. A genus $0$ $\CX$-extended $G$-equivariant 
modular functor has to be defined on $G$-covers of extended
surfaces of genus $0$. According to lemma \ref{lem:genus}, the 
total space of the associated $|\X|$-fold cover,
however,  can have a non-zero genus.
Hence the $\C$-extended
modular functor $\tau$ has to be defined for any genus,
and thus we can define, as in theorem \ref{thm:GMF}, the 
equivariant modular functor $\tau^\X$ for any genus as well.

\section{$\ZZ$-permutation equivariant fusion categories}\label{sec:z2}

\subsection{Notation and conventions}\label{ssec:notconv}
From now on, we restrict ourselves on the case where the cyclic group $G=\ZZ$ acts on the ordered two-element set $\X=\{1,2\}$ by permutation of elements. We denote the generator of \ZZ by $g$ and hence write $\ZZ=\{1,g\}$ with $g^2=1$. 

We note that in this special case,
the \ZZ-cover $P\to M$ and the cover $\X\times_{\ZZ}P\to M$ are 
isomorphic as covers over the manifold $M$. 
However, whereas $P$ has only one marked point for every 
connected component of $\partial M$, the cover $\X\times_{\ZZ}P$ 
has one marked point in every connected component of the
boundary of the cover. We assume that one distinguished
point has been chosen over each connected component of $\partial P$
over $\partial M$.

Our construction will involve choices: we define
the value of functors like the duality functor or the
tensor product functor by objects representing
functors constructed from the equivariant modular functor
$\tau^\X$. This is possible, since
by \cite[lemma 5.3.1]{BK} any additive functor 
$F:\mathcal{D}\to\vect_{\Bbbk}$
from a semisimple abelian category $\mathcal D$ with finitely many objects is representable.
By definition of the dual object this shows that any object $V$ is determined by the functor 
$\tau^\X(S_2,?,V)$.

The representing objects from the Yoneda lemma
will only
be unique up to canonical isomorphism.
Different choices of representing objects lead to different structures of ribbon categories on $\CZ$, which are equivalent via 
a tensor functor which is the identity functor 
on \CZ and whose structure morphisms 
are provided the Yoneda lemma. We will indicate such choices
in our discussion.

For the value of the equivariant modular functor 
$\tau^{\X}$ on the standard blocks of subsection \ref{sec:GTFT},
we introduce the shorthand notation
\be
\langle V_1,\dots,V_n\rangle_\X:=\tau^{\X}(S_n(g_1,\dots,g_n;1,\dots,1); V_1,\dots,V_n).
\ee
with objects $V_i\in \CZ_{g_i}$. We introduce a similar
shorthand for the $\C$-extended modular functor $\tau$
as well,
\be
\langle W_1,\dots,W_n\rangle:=\tau (S_n;W_1,\dots,W_n) \,\, ,
\ee
where $W_1,\dots, W_n$ are objects in $\C$.

As an abelian category, the $\ZZ$-equivariant
category \CZ is of the form $(\C\boxtimes\C)\oplus(\C)$, where 
$\C\boxtimes\C$ is the \emph{neutral} and \C the 
\emph{twisted} component or sector of \CZ. We denote objects 
in the neutral component by $A_1\times A_2, B_1\times B_2,\dots$ and objects in the twisted component by $M,N,\dots$.

The dual of an object $V$ in \C will be denoted by $V^\vee$. We agree to drop 
the tensor product symbol for objects of \C when using the monoidal structure 
of \C, so we write $AB\equiv A\otimes_\C B$. The braiding of two objects in 
\C will be denoted by $c_{A,B}$ and the equivariant braiding in \CZ by 
$C_{A,B}$ in capital letters. Similarly, $\theta_U$ denotes the twist,
$b_U$ the coevaluation and $d_U$ the evaluation in \C, while for the 
corresponding morphisms $\Theta_A$, $B_A$ and $D_A$ of $\CZ$ capital
letters are used.

When dealing with modular functors we use a graphical notation. First note that $\ZZ$ covers over $S_n$ can have the following two types of boundary components:

\be
\raisebox{-50pt}{
\bp(200,50)
\pic{possibleboundaries}
\ep
}
\ee

The first is a boundary component of non-trivial monodromy $g$, the second with trivial monodromy $1\in\ZZ$.

A decorated surface $\Sigma$ also represents the corresponding vector
space $\tau(\Sigma; V_1,\dots,V_n)$: we then write objects of the
appropriate categories to the boundary components:
\be
\raisebox{-50pt}{
\bp(200,150)
\pic{generic}
\put(-65,135) {$M$}
\put(0,30){$A_1\times A_2$}
\ep
}
\ee
When we write an object $A_1\times A_2$ next to a boundary embedded
in $\mathbb{R}^3$, our convention is such that the first object is assigned
to the outer circle and the second object to the inner circle.

To keep track of diffeomorphisms of a
surface $\Sigma$, we use techniques from the Lego-Teichm\"uller-Game (\cite{BKlego}), or LTG for short and find
a graphical representation of these diffeomorphisms.  
For the definition of the LTG, we refer to \cite{BKlego}; we use the  
notations of this paper. The LTG requires to draw
marking graphs with a base point on the surfaces under consideration:
\be
\raisebox{-50pt}{
\bp(200,150)
\pic{generic-paths}
\put(-65,135) {$M$}
\put(0,30){$A_1\times A_2$}
\ep
}
\ee
We fix a standard marking on the standard sphere $S_n$
by taking the $n$ straight lines that relate the marked
point $k-\frac i3$ to the point $-2i$ for
$k=1,2\ldots n$.

Diffeomorphisms  map marking graphs to marking graphs. Different marking graphs are connected by finite sequences of LTG moves.
This sequence is unique \cite[Theorem 4.24]{BKlego} up to a known set of relations.
The LTG gives rules to translate these moves into natural 
isomorphisms between the corresponding vector spaces. In most cases,
we will suppress the LTG-move $\Z$ (the rotation move) in manipulations of the marking graphs; when translating the LTG-moves into 
morphisms in \C, we will point out at what point one has to 
insert $\Z$-moves.

Similarly, if a surface has been obtained by gluing two boundary circles,
the corresponding circle will be drawn on the surface; such circles are
called \emph{cuts}. A single base point of the sewn surface is obtained
by contracting the link crossing the cut.

\subsection{Dual objects}

We start by finding for every object $V\in\CZ$ a candidate
for the dual object. This will be the object $V^\ast$
representing the functor
\be
\bearlll
\CZ&\to\vect_\Bbbk\\
X&\mapsto \langle V,X\rag.
\eear
\ee
We consider two cases separately.
\begin{itemize}
\item
For $V$ in the neutral component,  $V=A_1\times A_2\in \CC$, we 
consider the standard block $S_2(1,1;1,1)$ whose cover
spaces is  the disjoint union of two copies of $S_2$ and find the object representing the functor
\be
\bearlll
\CC&\to\vect_\Bbbk\\
X_1\times X_2&\mapsto \la A_1\times A_2,X_1\times X_2\rag.
\eear
\ee
Here, we restricted  our attention to the value of the functor on
the neutral component,
since the grading implies that the functor is zero on the 
twisted component. We compute
\be\label{eq:neutraldual}
\begin{split}
\tauzz(S_2(1,1;1,1);V,X)&=\tau(S_2\sqcup S_2;A_1, A_2,X_1, X_2)\\
&\cong\tau(S_2;A_1,X_1)\otimes_\Bbbk\tau(S_2;A_2,X_2)\\
&\defn\homc(\unit,A_1 X_1)\otik\homc(\unit,A_2X_2)\\
&\cong\homc(A_1^\vee,X_1)\otik\homc(A_2^\vee,X_2)\\
&\defn\homcc(A_1^\vee\times A_2^\vee,X_1\times X_2)\\
&\defn\homcz(A_1^\vee\times A_2^\vee,X_1\times X_2).
\end{split}
\ee
Let us explain in this example the various steps in detail; in subsequent
calculations, we will only explain additional new features.
The first equality is by definition of the equivariant
modular functor $\tauzz$ via the cover functor $\cover_\X$; the second isomorphy is the
tensoriality of $\tau$. The third equality follows from the
definition of the modular functor $\tau$ from the modular
category $\C$. The next isomorphism is a consequence of
the duality in the category $\C$, while the last identities
follow from the definition of the Deligne product
$\C\boxtimes \C$ and the definition of $\CX$.
 
Hence we find that $(A_1\times A_2)^\ast\cong A_1^\vee\times A_2^\vee$. 
A choice of the representing object and of a diffeomorphism
is involved in equation (\ref{eq:neutraldual}); 
different choices ultimately lead to equivalent dualities on \CZ.

\item
Let $M$ be an object in the twisted component of \CZ, hence $M\in\C$. The total space of the cover $(S_2(g,g;1,1)\to S_2)$ has only one connected component and is in fact diffeomorphic to $S_2$ as a  smooth manifold, so we find
\be
\begin{split}
\tauzz(S_2(g,g;1,1);M,X)&=\tau(S_2;M,X)\\
&\defn\homc(\unit,MX)\\
&\cong\homc(M^\vee,X)
\end{split}
\ee
We thus find that $M^\ast\cong M^\vee$. Again, a choice of
a representing object and of a diffeomorphism 
$S_2(g,g;1,1)\stackrel{\cong}{\to} S_2$ are involved.
\end{itemize}

So far we only found dual objects, for the further discussion on duality in \CZ we need a tensor product on \CZ. For this reason, 
we return to more aspects of a ribbon structure in section
\ref{sec.4.7}.

\subsection{The tensor product}
The tensor product of two objects $V,W\in\CZ$ is defined 
as the object representing the functor $\la ?,V,W\rag$,
i.e.\
by $\la T,V\otimes W\rag=\la T,V,W\rag$. This again involves a
choice of the representing object. The total space of the standard cover
$S_3((g_1g_2)^{-1},g_1,g_2;1,1,1)$ is isomorphic to the $n$-punctured
sphere $S_n$ for a value of $n$ that depends on the group
elements $g_1$ and $g_2$. We will have to choose such
a diffeomorphism as well.
Different choices of this diffeomorphism lead to isomorphic tensor 
products, but the
choice will enter in the associativity constraints;
hence we have to keep track of this choice. This is done 
by considering the image of the standard marking graph
on $S_n$ that has been introduced at the end of section
\ref{ssec:notconv}. We call this image the 
standard marking graph on $S_3((g_1g_2)^{-1},g_1,g_2;1,1,1)$. 

\begin{itemize}
\item
We first consider two objects $V\equiv A_1\times A_2$ and $W\equiv B_1\times B_2$ in the neutral component of $\CZ$. Since total space of the cover $S_3(1,1,1;1,1,1)$ is just the disjoint union of two three-holed spheres, we find 
\be
\begin{split}
\la T,V\otimes W\rag&\equiv\la T_1\times T_2,A_1\times A_2,B_1\times B_2\rag\\
&\defn\tauzz(S_3(1,1,1;1,1,1);T_1\times T_2,A_1\times A_2,B_1\times B_2)\\
&
\defn
\tau(S_3\sqcup S_3;T_1, T_2,A_1, A_2,B_1, B_2)\\
&\cong\tau(S_3;T_1,A_1,B_1)\otik\tau(S_3;T_2,A_2,B_2)\\
&\defn\homc(\unit,T_1A_1B_1)\otik\homc(\unit,T_2A_2B_2)\\
&\cong\homc(T_1^\vee,A_1B_1)\otik\homc(T_2^\vee,A_2B_2)\\
&\defn\homcc(T_1^\vee\times T_2^\vee,A_1B_1\times A_2B_2)
\end{split}
\ee
Our definition of the tensor product on objects
of the untwisted component thus yields $(A_1\times A_2)\otimes (B_1\times B_2):=A_1B_1\times A_2B_2$, i.e.\  the usual tensor product on $\CC$. The standard marking graph on  the cover $S_3(1,1,1;1,1,1)$ is then

\be
\raisebox{-50pt}{
\bp(200,120)
\pic{111-paths}
\put(-78,120) {$T_1\times T_2$}
\put(5,10){$A_1\times A_2$}
\put(-165,10){$B_1\times B_2$}
\ep
}
\ee
\\[.5em]
\item
Next we consider the tensor product of an object in the
untwisted component $A_1\times A_2$ with an object $M$ in the
twisted component.\ The total space $S_3(g,1,g;1,1,1)$
of the relevant cover 
is diffeomorphic to $S_4$ as can be seen by the following 
chain of diffeomorphisms:
  \be
\begin{split}
\raisebox{-40pt}{
  \bp(350,150)
  \put(0,0){\pic{g1g-transform-1}}
  \put(59,130){$T$}
  \put(276,130){$T$}
  \put(160,60){$\congto$}
  \put(125,25){$A_1\times A_2$}
  \put(-16,25){$M$}
  \put(202,25){$M$}
  \put(342,25){$A_1$}
  \put(262,70){$A_2$}
  \ep}\\[1em]
  \raisebox{20pt}{
  \bp(350,120)
  \put(0,0){\pic{g1g-transform-2}}
  \put(59,115){$T$}
  \put(276,115){$T$}
  \put(160,48){$\congto$}
  \put(-50,48){$\congto$}
  \put(128,50){$A_1$}
  \put(54,-15){$A_2$}
  \put(-18,50){$M$}
  \put(202,50){$M$}
  \put(344,50){$A_1$}
  \put(274,-15){$A_2$}
  \ep}
\end{split}
\label{46}
\ee

In the first and second step we move the inner hole 
labelled by $A_2$ around the component of the boundary with
non-trivial monodromy, labelled by $M$. 
The last step is also an isomorphism of manifolds;\ the reader
should not be confused by the fact that we have to draw
two-dimensional manifolds immersed into the three-dimensional
space ${\mathbb R}^3$. Hence we find that
\be
\begin{split}
\la T,(A_1\times A_2)\otimes M\rag&=\la T,A_1\times A_2,M\rag\\
&\defn\tauzz(S_3(g,1,g;1,1,1);T,A_1\times A_2,M)\\
&\defn\tau(S_3(g,1,g;1,1,1);T,A_1,A_2,M)\\
&\cong\tau(S_4;T,A_1,A_2,M)\\
&\defn\homc(\unit,TA_1A_2M)\\
&\cong\homc(T^\vee,A_1A_2M).
\end{split}
\ee
We are thus lead to the definition
$(A_1\times A_2)\otimes M:=A_1A_2M$. The image of the standard 
marking graph on $S_4$ under the diffeomorphism described
in equation (\ref{46}) gives the following marking on 
$S_3(g,1,g;1,1,1)$:

\be
\raisebox{-50pt}{
\bp(200,120)
\pic{g1g-paths}
\put(-65,120) {$T$}
\put(5,10){$A_1\times A_2$}
\put(-140,10){$M$}
\ep
}
\ee
\\[.5em]

\item
The discussion of the tensor product $M\oti (A_1\times A_2)$ 
closely parallels the preceding discussion. Again the manifold $S_3(g,g,1;1,1,1)$ is 
diffeomorphic to the four-punctured sphere $S_4$. 
We introduce a diffeomorphism similar to the one defined
in equation (\ref{46}):

  \be
\begin{split}
\raisebox{-40pt}{
  \bp(370,120)
  \put(0,0){\pic{gg1-transform-1}}
  \put(59,118){$T$}
  \put(270,118){$T$}
  \put(160,60){$\congto$}
  \put(-48,13){$A_1\times A_2$}
  \put(126,13){$M$}
  \put(340,13){$M$}
  \put(196,13){$A_1$}
  \put(277,52){$A_2$}
  \ep}\\[2em]
  \raisebox{-45pt}{
  \bp(370,130)
  \put(0,0){\pic{gg1-transform-2}}
  \put(52,125){$T$}
  \put(270,125){$T$}
  \put(160,58){$\congto$}
  \put(-50,58){$\congto$}
  \put(48,-16){$A_1$}
  \put(-22,56){$A_2$}
  \put(118,56){$M$}
  \put(340,56){$M$}
  \put(270,-16){$A_1$}
  \put(198,56){$A_2$}
  \ep}
\end{split}
\ee

We find:
\be
\begin{split}
\la T,M\oti(A_1\times A_2)\rag&\defn\la T,M,A_1\times A_2\rag\\
&\defn\tauzz(S_3(g,g,1;1,1,1);T,M,A_1\times A_2)\\
&\defn\tau(S_3(g,g,1;1,1,1);T,M,A_1,A_2)\\
&\cong\tau(S_4;T,M,A_1,A_2)\\
&\defn\homc(\unit,TMA_1A_2)\\
&\cong\homc(T^\vee,MA_1A_2)
\end{split}
\ee
so that we are lead to define 
$M\oti(A_1\times A_2):=MA_1A_2$ and have the standard marking 
graph on $S_3(g,g,1;1,1,1)$ 

\be
\raisebox{-50pt}{
\bp(150,120)
\pic{gg1-paths}
\put(-65,120) {$T$}
\put(5,10){$M$}
\put(-165,10){$A_1\times A_2$}
\ep
}
\ee
\\[.5em]

These definitions coincide with the ad hoc definitions made
in \cite{bfrs}.

\item
We now turn to a new result about the \ZZ-equivariant tensor product:
the tensor product of two objects in the twisted component 
of \CZ. To this end, we first note that the tensor product functor
\be
\begin{split}
\CC&\to\C\\
\bigoplus_l V_l\times W_l&\mapsto \bigoplus_l V_lW_l
\end{split}
\ee
has the right adjoint \cite[Thm. 2.21]{KR} 
\be
\begin{split}
R:\ \C&\to\CC\\
V&\mapsto\bigoplus_{i\in \I}VU_i^\vee\times U_i \,\, ,
\end{split}
\label{55}
\ee
where the sum is over representatives of isomorphism classes
of simple objects of $\C$.

We have to consider the  manifold $S_3(1,g,g;1,1,1)$ together
with the following diffeomorphisms

  \be
\begin{split}
\raisebox{-45pt}{
  \bp(360,120)
  \put(0,0){\pic{1gg-transform-1}}
  \put(42,118){$T_1\times T_2$}
  \put(267,118){$T_1$}
  \put(160,60){$\congto$}
  \put(-17,13){$N$}
  \put(126,13){$M$}
  \put(340,13){$M$}
  \put(196,13){$N$}
  \put(266,30){$T_2$}
  \ep}\\[2em]
  \raisebox{15pt}{
  \bp(360,140)
  \put(0,0){\pic{1gg-transform-2}}
  \put(52,128){$T_1$}
  \put(269,128){$T_1$}
  \put(160,55){$\congto$}
  \put(-50,55){$\congto$}
  \put(-17,57){$T_2$}
  \put(120,57){$M$}
  \put(49,-17){$N$}
  \put(340,57){$M$}
  \put(199,57){$T_2$}
  \put(270,-17){$N$}
  \ep}
\end{split}
\ee

We thus find
\be
\begin{split}
\la T_1\times T_2,M\oti N\rag&\defn\la T_1\times T_2,M,N\rag\\
&\defn\tauzz(S_3(1,g,g;1,1,1);T_1\times T_2,M,N)\\
&\defn\tau(S_3(1,g,g;1,1,1);T_1,T_2,M,N)\\
&\cong\tau(S_4;T_2,T_1,M,N)\\
&\defn\homc(\unit,T_2T_1MN)\\
&\cong\homc(T_1^\vee T_2^\vee,MN)\\
&\cong\homcc(T_1^\vee\times T_2^\vee,R(MN))
\end{split}
\ee
where the last isomorphism is given by the adjunction between $R$\ and the tensor product functor. Hence we set
\be
M\oti N=R(MN)=\bigoplus_{i\in\I}MNU_i^\vee\times U_i.
\ee
We note that the adjunction morphism
\be\label{eqn:Radj}
\begin{split}
\homc(T_1^\vee T_2^\vee,MN)&\cong \bigoplus_{i\in \I}\homcc(T_1^\vee\times T_2^\vee,MNU_i^\vee\times U_i)\\
&\cong \bigoplus_{i\in\I}\homc(T_1^\vee,MNU_i^\vee)\otik\homc(T_2^\vee,U_i)
\end{split}
\ee
coincides with the gluing isomorphism

\be
\raisebox{20pt}{
\bp(370,150)
\put(0,0){\pic{adjunction-gluing}}
  \put(80,128){$T_1$}
  \put(-22,56){$T_2$}
  \put(-30,120){cut for gluing}
  \put(317,128){$T_1$}
  \put(142,56){$\congto\bigoplus_{i\in\I}$}
  \put(79,-16){$N$}
  \put(126,83){$M$}
  \put(385,56){$M$}
  \put(319,-16){$N$}
  \put(186,83){$T_2$}
  \put(243,83){$U_i$}
  \put(271,84){$U_i^\vee$}
\ep
}
\ee

so that on the total space $S_3(1,g,g;1,1,1)$ of the cover
we have the standard marking graph

\be
\raisebox{-50pt}{
\bp(160,120)
\pic{1gg-paths}
\put(-77,118) {$T_1\times T_2$}
\put(5,10){$M$}
\put(-137,10){$N$}
\ep
}
\ee\\
which has a cut drawn on the manifold close to the insertion of $T_2$.
The second arrow in the marking on $S_3(1,g,g;1,1,1)$ comes from the marking on $S_2(U_i, T_2)$ and
fixes the order of the objects in $\homc(\unit,U_iT_2)$.
\end{itemize}

Formulae for the dimensions of spaces of conformal blocks in permutation 
orbifolds have been given in \cite{bohs,Ba2}. It is easy to see that
these formulae imply that dimension of the spaces of conformal three-point 
blocks of the equivariant theory on the sphere involving two objects 
in the twisted sector are given by the dimension of the spaces of
four-point blocks on the sphere for $\C$. This nicely follows
from the geometry of the cover functor: the total space of the relevant cover 
is isomorphic to the four-punctured sphere. The analysis can be extended
to other conformal blocks as well.

We finally determine the tensor unit $\unit^{\ZZ}$ of
$\CZ$ that is  defined by 
\be
\la\unit^{\ZZ},U\rag:=\la U\rag
\ee
for all $U$ in \CZ. The classification of covers of
the one-holed sphere implies that
$\la U\rag$ is only defined when $U$ is in the neutral component, 
$U\in\CZ_1$. With $U=A_1\times A_2\in \CC$, we find $\la A_1\times A_2\rag\cong \la A_1\ra\otik\la A_2\ra$ and hence 
\be
\unit^{\ZZ}=\unit\times\unit.
\ee
Since the modular tensor category $\C$ was supposed to be 
strict, the unit constraints of $\CX$ are the identity morphisms.

\subsection{The associativity constraints}

The next step is to derive the associativity constraints for the tensor product given in the previous section. As special cases, we will find
the mixed associativity constraints for which
ans\"atze were proposed in \cite{bfrs}.

For any choice of three elements $p,q,r\in\ZZ$ we need to consider the 
\ZZ-cover 
$$S_4((pqr)^{-1},p,q,r;1,1,1,1)$$ 
of the four-holed sphere $S_4$. This 
cover will be cut in two different ways, representing the tensor 
product $(A\otimes B)\otimes C$ and $A\otimes (B\otimes C)$,
respectively. The two ways of cutting give two different 
markings on $S_4((pqr)^{-1},p,q,r;1,1,1,1)$ which in turn 
represent two different diffeomorphisms from the total
space of the cover
\be
f,g:S_4((pqr)^{-1},p,q,r;1,1,1,1)\congto S
\ee
to the appropriate standard block $S$, which is either a
punctured sphere, a disjoint union of  punctured spheres 
or a surface of genus one.
We then consider for all objects $T\in\CX$ the following chain of natural transformations:
\be\label{eq:66}
\begin{split}
&\homcz(T^\ast,(A\otimes B)\otimes C)\congto \tau(S;T,A,B,C)\\
&\stackrel{f_\ast^{-1}}{\to}\tau(S_4((pqr)^{-1},p,q,r;1,1,1,1);T,A,B,C)
\stackrel{g_\ast}{\to}\tau(S;T,A,B,C)\\
&\congto\homcz(T^\ast,A\otimes (B\otimes C)).
\end{split}
\ee
The first and the last isomorphism in \erf{eq:66} are determined by the definition of the tensor products $(A\oti B)\oti C$ and $A\oti(B\oti C)$ as objects in \C or \CC respectively.
The Yoneda lemma implies that this natural transformation comes
from a  natural isomorphism $\alpha_{A,B,C}:(A\otimes B)\otimes C\congto A\otimes (B\otimes C)$. The arguments of \cite{kpGMF,BK} then imply
that these isomorphisms satisfy the pentagon axiom. 

We summarize our strategy:
\begin{itemize}

\item Determine the two marking graphs on 
$S_4((pqr)^{-1},p,q,r;1,1,1,1)$ induced on the cover by 
cutting $S_4$ in the two ways determined by associativity and by our definition of the tensor product. 
Denote the marking graph representing $(A\otimes B)\otimes C$ by $M_1$ and the marking graph representing $A\otimes (B\otimes C)$ by $M_2$.

\item Determine the standard manifold $S$ such that $S\cong S_4((pqr)^{-1},p,q,r;1,1,1,1)$.
Transform the surface $S_4((pqr)^{-1},p,q,r;1,1,1,1)$ 
with the marking graph $M_1$ to the standard manifold $S$
in the way prescribed by the marking $M_2$.

\item This yields $S$, together with a marking graph.  Determine the LTG-moves that transform this graph into the standard marking graph on $S$.
\item Translate these LTG-moves into morphisms in \C or \CC.
\end{itemize}
Since we need to do this for any choice of $p,q,r\in\ZZ$, we will get eight different associativity constraints $\alpha_{A,B,C}$, 
depending on the sector of the objects $A,B,C$.

\subsubsection{Three objects from the untwisted component}
To illustrate the method,
we start with the easiest case, three objects $A_1\times A_2$, 
$B_1\times B_2$ and $C_1\times C_2$ in the untwisted component.
The total space $S_4(1,1,1,1;1,1,1,1)$ is just a disjoint union of two four-holed spheres. Both cutting procedures yield the same marking graph on $S_4\sqcup S_4$:

\be
\raisebox{0pt}{
\bp(130,130)
\put(0,0) {\pic{1111-ass}}
\put(95,100){$A_1\times A_2$}
  \put(-30,-5){$C_1\times C_2$}
  \put(-30,100){$T_1\times T_2$}
  \put(95,-5){$B_1\times B_2$}
\ep
}
\ee
\\

Hence we arrive at the trivial LTG-move. Taking into
account that $\C$ is supposed to be strict, we find that 
$\alpha_{A_1\times A_2,B_1\times B_2,C_1\times C_2}$ is 
the identity on $A_1B_1C_1\times A_2B_2C_2$.

\subsubsection{One object from the twisted component}

Next we derive the associativity isomorphisms involving one 
object in the twisted component and two objects in the untwisted 
component. To explain our prescription in detail, we first 
discuss the associativity constraint $\alpha_{A_1\times A_2,B_1\times B_2,M}$
in greater detail.

The first cutting amounts on
the total space of $S_4(g,1,1,g;1,1,1,1)$ to the gluing isomorphism
in $\C\boxtimes \C$:

\be
\begin{split}
&\bigoplus_{i,j\in\I} \la T,U_i^\vee\times U_j^\vee,M\rag\otik \la U_i\times U_j,A_1\times A_2,B_1\times B_2\rag\\
\congto&\la T,A_1\times A_2,B_1\times B_2,M\rag
\end{split}
\ee
With the standard markings on $S_3$-covers introduced in
section \ref{ssec:notconv}, we arrive at the following picture:

\be
\raisebox{0pt}{
\bp(250,130)
\put(0,0){\pic{g1g-111-gluing}}
\put(80,90){$U_i^\vee\times U_j^\vee$}
\put(0,-5){$M$}
\put(0,120){$T$}
\put(120,25){$U_i\times U_j$}
\put(235,-5){$B_1\times B_2$}
\put(235,120){$A_1\times A_2$}
\ep
}
\ee
By contracting the marking along the factorizing link, we get on
$S_4(g,1,1,g;1,1,1,1)$ the marking
\be
\raisebox{10pt}{
\bp(130,120)
\put(0,0){\pic{g11g-ass-1}}
\put(90,100){$A_1\times A_2$}
\put(-5,-5){$M$}
\put(95,-5){$B_1\times B_2$}
\put(-5,100){$T$}
\ep
}
\label{67}
\ee
The second gluing procedure is the isomorphism
\be
\begin{split}
&\bigoplus_{i\in \I}\la T,A_1\times A_2,U_i^\vee\rag\otik\la U_i,B_1\times B_2,M\rag\\
\congto&\la T,A_1\times A_2,B_1\times B_2,M\rag
\end{split}
\ee
with the pictorial description
\be
\raisebox{20pt}{
\bp(250,140)
\put(0,0){\pic{g1g-g1g-gluing}}
\put(99,86){$U_i^\vee$}
\put(0,-5){$T$}
\put(-32,120){$A_1\times A_2$}
\put(130,28){$U_i$}
\put(235,-5){$M$}
\put(235,120){$B_1\times B_2$}
\ep
}
\ee
Again contracting the marking along the factorizing link, we get on $S_4(g,1,1,g;1,1,1,1)$ the
second marking
\be
\raisebox{10pt}{
\bp(130,120)
\put(0,0){\pic{g11g-ass-2}}
\put(90,100){$A_1\times A_2$}
\put(-5,-5){$M$}
\put(95,-5){$B_1\times B_2$}
\put(-5,100){$T$}
\ep
}
\label{70}
\ee

The surface $S_4(g,1,1,g;1,1,1,1)$ is isomorphic to a sphere $S_6$
with $6$ punctures. We draw the graph (\ref{67}) obtained
from the first gluing procedure on $S_4(g,1,1,g;1,1,1,1)$ 
and use the isomorphism to $S_6$ encoded in the graph
(\ref{70}) obtained from the second gluing procedure: 

\be
\begin{split}
\raisebox{20pt}{
\bp(280,145)
\put(-5,0){\raisebox{6pt}{\pic{g11g-transform-1}}\qquad\qquad\qquad \pic{g11g-transform-2}}
  \put(82,110){$A_1\times A_2$}
  \put(262,110){$A_1$}
  \put(125,55){$\congto$}
  \put(0,110){$T$}
  \put(-12,3){$M$}
  \put(90,3){$B_1\times B_2$}
  \put(167,110){$T$}
  \put(158,3){$M$}
  \put(266,3){$B_1$}
  \put(168,38){$B_2$}
  \put(170,65){$A_2$}
\ep
}\\
\raisebox{20pt}{
\bp(295,145)
\put(0,0){\raisebox{6pt}{\pic{g11g-transform-3}}\qquad\qquad\qquad \pic{g11g-transform-4}}
  \put(-32,55){$\congto$}
  \put(110,120){$A_1$}
  \put(291,110){$A_1$}
  \put(142,55){$\congto$}
  \put(0,120){$T$}
  \put(-12,3){$M$}
  \put(110,3){$B_1$}
  \put(207,110){$T$}
  \put(168,53){$M$}
  \put(292,-3){$B_1$}
  \put(207,-3){$B_2$}
  \put(325,53){$A_2$}
  \put(53,-8){$B_2$}
  \put(119,60){$A_2$}
\ep
}
\end{split}
\ee

The LTG-moves that transform this marking into the standard marking on $S_6$ are easily read off:
\be
\raisebox{20pt}{
\bp(290,145)
\put(-40,0){\pic{g11g-transform-4}\qquad\qquad\qquad\qquad \pic{S6-paths}}
  \put(70,110){$A_1$}
  \put(291,110){$A_1$}
  \put(130,55){$\stackrel{\B_{B_1,A_2}}{\longrightarrow}$}
  \put(-20,110){$T$}
  \put(-58,53){$M$}
  \put(-20,-3){$B_2$}
  \put(207,110){$T$}
  \put(168,53){$M$}
  \put(292,-3){$B_1$}
  \put(207,-3){$B_2$}
  \put(325,53){$A_2$}
  \put(70,-3){$B_1$}
  \put(100,53){$A_2$}
\ep
}
\ee

The LTG-move $\B_{B_1,A_2}$ corresponds to the natural transformation 
on \\$\hom_\C(\unit,TA_1B_1A_2B_2M)$ that is induced by the 
braiding in \C, pictorially,
\be
\raisebox{10pt}{
\bp(200,60)
\put(0,0){\rib{6ribbons}}
\put(140,0){\rib{6ribbons-braided}}
  \put(4,47){\scriptsize $T$}
  \put(14,47){\scriptsize $A_1$}
  \put(30,47){\scriptsize $B_1$}
  \put(43,47){\scriptsize $A_2$}
  \put(55,47){\scriptsize $B_2$}
  \put(69,47){\scriptsize $M$}
\put(95,20){$\stackrel{\B_{B_1,A_2}}{\mapsto}$}
\put(140,0){
  \put(4,47){\scriptsize $T$}
  \put(14,47){\scriptsize $A_1$}
  \put(30,47){\scriptsize $A_2$}
  \put(43,47){\scriptsize $B_1$}
  \put(55,47){\scriptsize $B_2$}
  \put(69,47){\scriptsize $M$}
}
\ep
}
\label{76}
\ee
We now apply the isomorphism $\hom_\C(\unit,TA_1B_1A_2B_2M)\cong\hom_\C(T^\vee,A_1B_1A_2B_2M)$ induced by the ribbon structure on \C. Setting $T^\vee:=A_1B_1A_2B_2M$ 
and evaluating the natural transformation (\ref{76}) on the 
identity, we obtain the morphism for the associativity constraint
as  $\alpha_{A_1\times A_2,B_1\times B_2,M}=\id_{A_1}\otimes c_{B_1,A_2}\otimes \id_{B_2M}$.

These are precisely the mixed associativity constraints proposed
in \cite{bfrs}; in that paper, a whole family of 
mixed associativity constraints was given. Here we read off
the morphisms from marking graphs on surfaces. 
These graphs depend on the choice of diffeomorphisms made 
in subsection \ref{ssec:notconv}. Different choices of diffeomorphisms lead to different associativity constraints, of which some were already 
proposed in \cite{bfrs}. Different choices of diffeomorphisms are related by elements of the mapping class group of the relevant 
surface. These group elements can be translated into morphisms
in $\C$ which can be used to endow
the identity functor on \CZ with the structure of an 
monoidal equivalence of monoidal categories.

For the associativity constraint 
$\alpha_{M,A_1\times A_2,B_1\times B_2}$, we get a similar picture. The first gluing procedure on $S_4(g,g,1,1;1,1,1,1)$ gives the marking
\be
\raisebox{20pt}{
\bp(130,130)
\put(0,0){\pic{gg11-ass-1}}
\put(95,100){$M$}
\put(-35,-5){$B_1\times B_2$}
\put(95,-5){$A_1\times A_2$}
\put(-5,100){$T$}
\ep
}
\ee
while the second procedure gives the marking
\be
\raisebox{20pt}{
\bp(130,130)
\put(0,0){\pic{gg11-ass-2}}
\put(95,100){$M$}
\put(-35,-5){$B_1\times B_2$}
\put(95,-5){$A_1\times A_2$}
\put(-5,100){$T$}
\ep
}
\ee

On the sphere $S_6$ with $6$ punctures, this gives the LTG-move

\be
\raisebox{20pt}{
\bp(280,145)
\put(-40,0){\pic{gg11-LTG-move}\qquad\qquad\qquad\qquad \pic{S6-paths}}
  \put(130,55){$\stackrel{\B_{B_1,A_2}^{-1}}{\longrightarrow}$}
  \put(70,110){$M$}
  \put(-20,110){$T$}
  \put(-58,53){$B_2$}
  \put(70,-3){$B_1$}
  \put(100,53){$A_1$}
  \put(-20,-3){$A_2$}
\put(225,0){
  \put(70,110){$M$}
  \put(-20,110){$T$}
  \put(-58,53){$B_2$}
  \put(70,-3){$B_1$}
  \put(100,53){$A_1$}
  \put(-20,-3){$A_2$}
}
\ep
}
\ee

By the same reasoning as after equation (\ref{76}),
we conclude that $\alpha_{M,A_1\times A_2,B_1\times B_2}=\id_{MA_1}\otimes c_{B_1,A_2}^{-1}\otimes \id_{B_2}$. This is one of the
constraints in  \cite[Corollary 3]{bfrs}.

For the associativity constraint $\alpha_{A_1\times A_2,M,B_1\times B_2}$ we consider $S_4(g,1,g,1;1,1,1,1)\cong S_6$. The two gluing procedures give the two markings

\be
\raisebox{20pt}{
\bp(280,130)
\put(0,0){\pic{g1g1-ass-1}\qquad\qquad\qquad \pic{g1g1-ass-2}}
\put(95,100){$A_1\times A_2$}
\put(-35,-5){$B_1\times B_2$}
\put(95,-5){$M$}
\put(-5,100){$T$}
\put(180,0){
\put(90,100){$A_1\times A_2$}
\put(-35,-5){$B_1\times B_2$}
\put(90,-5){$M$}
\put(-5,100){$T$}
}
\ep
}
\ee

Our general prescription gives the following marking graph on $S_6$:

\be
\raisebox{20pt}{
\bp(87,135)
\put(-40,0){\pic{g1g1-LTG-move}}
  \put(70,110){$A_1$}
  \put(-20,110){$T$}
  \put(-58,53){$B_2$}
  \put(70,-3){$M$}
  \put(100,53){$A_2$}
  \put(-20,-3){$B_1$}
\ep
}
\ee

The following sequence of four LTG-moves transforms this marking
graph into the standard marking graph on 
$S_6$:
\be
\begin{split}
\raisebox{20pt}{
\bp(280,145)
\put(-30,0){\pic{g1g1-LTG-move}}
\put(180,0){\pic{g1g1-LTG-move-2}}
  \put(80,110){$A_1$}
  \put(-10,110){$T$}
  \put(-48,53){$B_2$}
  \put(80,-3){$M$}
  \put(105,53){$A_2$}
  \put(-10,-3){$B_1$}
  \put(125,53){$\stackrel{\B_{TA_1,B_2}^{-1}}{\rightarrow}$}
\put(210,0){
  \put(80,110){$A_1$}
  \put(-10,110){$T$}
  \put(-48,53){$B_2$}
  \put(80,-3){$M$}
  \put(110,53){$A_2$}
  \put(-10,-3){$B_1$}
  }
\ep
}\\
\raisebox{20pt}{
\bp(280,145)
\put(-30,0){\pic{g1g1-LTG-move-3}}
\put(180,0){\pic{g1g1-LTG-move-4}}
\put(120,53){$\stackrel{\B_{B_2MB_1,A_2}}{\rightarrow}$}
\put(-80,53){$\stackrel{\B_{MB_1,A_2}^{-1}}{\rightarrow}$}
  \put(80,110){$A_1$}
  \put(-10,110){$T$}
  \put(-45,53){$B_2$}
  \put(80,-3){$M$}
  \put(105,53){$A_2$}
  \put(-10,-3){$B_1$}
\put(210,0){
  \put(80,110){$A_1$}
  \put(-10,110){$T$}
  \put(-45,53){$B_2$}
  \put(80,-3){$M$}
  \put(110,53){$A_2$}
  \put(-10,-3){$B_1$}
  }
\ep
}\\
\raisebox{20pt}{
\bp(280,145)
\put(0,0){\pic{S6-paths}}
\put(-80,53){$\stackrel{\B_{TA_1A_2,B_2}}{\rightarrow}$}
\put(30,0){
  \put(80,110){$A_1$}
  \put(-10,110){$T$}
  \put(-48,53){$B_2$}
  \put(80,-3){$M$}
  \put(110,53){$A_2$}
  \put(-10,-3){$B_1$}
}
\ep
}
\end{split}
\ee
The sequence 
\be
\B_{TA_1A_2,B_2}\circ \B_{B_2MB_1,A_2}\circ \B_{MB_1,A_2}^{-1}\circ \B_{TA_1,B_2}^{-1}
\ee
of LTG-moves is translated into the natural transformation 
\be
\homc(\unit,TA_1A_2MB_1B_2)\congto\homc(\unit,TA_1A_2MB_1B_2),
\ee
where we will need to insert the appropriate $\Z$-moves implementing
cyclicity:
\be
\begin{split}
\raisebox{40pt}{
\bp(390,100)
\put(0,0){\rib{6ribbons}}
\put(120,-20){\rib{6ribbons-Z}}
\put(270,-20){\rib{1g1-ass-3}}
  \put(4,47){\scriptsize $T$}
  \put(14,47){\scriptsize $A_1$}
  \put(30,47){\scriptsize $A_2$}
  \put(43,47){\scriptsize $M$}
  \put(55,47){\scriptsize $B_1$}
  \put(69,47){\scriptsize $B_2$}
\put(95,20){$\stackrel{\Z}{\mapsto}$}
\put(232,20){$\stackrel{\B_{TA_1,B_2}^{-1}}{\mapsto}$}
\put(140,0){
  \put(-4,47){\scriptsize $T$}
  \put(7,47){\scriptsize $A_1$}
  \put(22,47){\scriptsize $A_2$}
  \put(48,47){\scriptsize $B_1$}
  \put(-23,47){\scriptsize $B_2$}
  \put(36,47){\scriptsize $M$}
}
\put(280,0){
  \put(-10,47){\scriptsize $T$}
  \put(2,47){\scriptsize $A_1$}
  \put(29,47){\scriptsize $A_2$}
  \put(55,47){\scriptsize $B_1$}
  \put(15,47){\scriptsize $B_2$}
  \put(42,47){\scriptsize $M$}
}
\ep
}\\
\raisebox{0pt}{
\bp(390,150)
\put(0,40){\rib{1g1-ass-4}}
\put(150,20){\rib{1g1-ass-5}}
\put(290,0){\rib{1g1-ass-6}}
\put(-38,73){$\stackrel{\B_{MB_1,A_2}^{-1}}{\rightarrow}$}
\put(105,73){$\stackrel{\B_{B_2MB_1,A_2}}{\rightarrow}$}
\put(250,73){$\stackrel{\B_{TA_1A_2,B_2}}{\rightarrow}$}
  \put(0,109){\scriptsize $T$}
  \put(13,109){\scriptsize $A_1$}
  \put(65,109){\scriptsize $A_2$}
  \put(40,109){\scriptsize $M$}
  \put(52,109){\scriptsize $B_1$}
  \put(27,109){\scriptsize $B_2$}
\put(155,47){
  \put(0,69){\scriptsize $T$}
  \put(12,69){\scriptsize $A_1$}
  \put(26,69){\scriptsize $A_2$}
  \put(52,69){\scriptsize $M$}
  \put(66,69){\scriptsize $B_1$}
  \put(40,69){\scriptsize $B_2$}
}
\put(300,55){
  \put(14,69){\scriptsize $T$}
  \put(27,69){\scriptsize $A_1$}
  \put(40,69){\scriptsize $A_2$}
  \put(52,69){\scriptsize $M$}
  \put(63,69){\scriptsize $B_1$}
  \put(-3,69){\scriptsize $B_2$}
}
\ep
}\\
\raisebox{0pt}{
\bp(390,150)
\put(0,0){\rib{1g1-ass-7}}
\put(180,15){\rib{1g1-ass-8}}
\put(310,15){\rib{1g1-ass-9}}
\put(-20,60){$\stackrel{\Z^{-1}}{\mapsto}$}
\put(150,60){$=$}
\put(280,60){$=$}
  \put(39,137){\scriptsize $T$}
  \put(52,137){\scriptsize $A_1$}
  \put(65,137){\scriptsize $A_2$}
  \put(77,137){\scriptsize $M$}
  \put(87,137){\scriptsize $B_1$}
  \put(106,137){\scriptsize $B_2$}
\put(145,-18){
  \put(39,137){\scriptsize $T$}
  \put(52,137){\scriptsize $A_1$}
  \put(65,137){\scriptsize $A_2$}
  \put(80,137){\scriptsize $M$}
  \put(94,137){\scriptsize $B_1$}
  \put(111,137){\scriptsize $B_2$}
}
\put(275,-30){
  \put(39,137){\scriptsize $T$}
  \put(52,137){\scriptsize $A_1$}
  \put(65,137){\scriptsize $A_2$}
  \put(80,137){\scriptsize $M$}
  \put(94,137){\scriptsize $B_1$}
  \put(111,137){\scriptsize $B_2$}
}
\ep
}
\end{split}
\ee
Using again the isomorphy $\homc(\unit,TA_1A_2MB_1B_2)\cong\homc(T^\vee,A_1A_2MB_1B_2)$
implied by duality and evaluating on the identity morphism gives
the associativity constraint

\be
\raisebox{0pt}{
\bp(200,100)
\put(100,0){\rib{1g1-ass-final}}
\put(0,50){$\alpha_{A_1\times A_2,M,B_1\times B_2}=$}
\put(64,-45){
  \put(39,137){\scriptsize $T$}
  \put(52,137){\scriptsize $A_1$}
  \put(65,137){\scriptsize $A_2$}
  \put(80,137){\scriptsize $M$}
  \put(94,137){\scriptsize $B_1$}
  \put(108,137){\scriptsize $B_2$}
}
\put(64,-135){
  \put(39,137){\scriptsize $T$}
  \put(52,137){\scriptsize $A_1$}
  \put(65,137){\scriptsize $A_2$}
  \put(80,137){\scriptsize $M$}
  \put(94,137){\scriptsize $B_1$}
  \put(108,137){\scriptsize $B_2$}
}
\ep
}
\ee

\subsubsection{Two  objects in the twisted component}

We next discuss associativity constraints involving two 
objects in the twisted and one in the untwisted component of \CZ. The tensor product of three such objects lies in the untwisted component. Again the
relevant marking graphs on covers of the four-punctured
sphere contain cuts. 

We start with the associativity constraint $\alpha_{M,N,A_1\times A_2}$.
The two gluing procedures over $S_4$ yield the following two markings on $S_4(1,g,g,1;1,1,1,1)$, where we 
already removed the cuts and contracted the factorizing links:

\be
\raisebox{20pt}{
\bp(280,130)
\put(0,0){\pic{1gg1-ass-1}\qquad\qquad\qquad\qquad \pic{1gg1-ass-2}}
\put(95,100){$M$}
\put(-35,-5){$A_1\times A_2$}
\put(95,-5){$N$}
\put(-29,100){$T_1\times T_2$}
\put(195,0){
\put(95,100){$M$}
\put(-35,-5){$A_1\times A_2$}
\put(95,-5){$N$}
\put(-29,100){$T_1\times T_2$}
}
\ep
}
\ee

This gives the following marking on the six-punctured sphere
$S_6$:

\be
\raisebox{20pt}{
\bp(77,135)
\put(-40,0){\pic{1gg1-LTG-move}}
  \put(70,110){$M$}
  \put(-22,110){$T_1$}
  \put(-56,53){$T_2$}
  \put(68,-3){$A_1$}
  \put(100,53){$N$}
  \put(-22,-3){$A_2$}
\ep
}
\ee

It is transformed into the standard marking on $S_6$ by the LTG-moves $\B_{A_1,A_2}^{-1}\circ \B_{A_1,T_2}^{-1}$. Now we need to translate this into a natural isomorphism
\be
\begin{split}
&\bigoplus_{i\in\I}\homc(\unit,T_1MNU_i^\vee A_1)\otik\homc(\unit,T_2U_iA_2)\\
\congto&\bigoplus_{j\in\I}\homc(\unit,T_1MNA_1A_2U_j^\vee)\otik\homc(\unit,T_2U_j)
\label{86}
\end{split}
\ee
Removing the cuts in the markings on $S_4(1,g,g,1;1,1,1,1)$ 
amounts to the isomorphism
\be
\bigoplus_{i\in\I}\homc(\unit,T_1MNU_i^\vee A_1)\otik\homc(\unit,T_2U_iA_2)\congto\homc(\unit,T_1MNA_2T_2A_1)
\ee
which is given by the generalized $\F$-move:

\be
\raisebox{25pt}{
\bp(307,80)
\put(10,0){\rib{5ribbons}}
\put(110,0){\rib{3ribbons}}
\put(210,-20){\rib{F-move}}
  \put(-25,13){$\bigoplus_{i\in\I}$}
  \put(165,13){$\mapsto\bigoplus_{i\in\I}$}  
  \put(89,13){$\otik$}
  \put(27,50){\scriptsize $M$}
  \put(53,50){\scriptsize $U_i^\vee$}
  \put(13,50){\scriptsize $T_1$}
  \put(114,50){\scriptsize $T_2$}
  \put(127,50){\scriptsize $U_i$}
  \put(68,50){\scriptsize $A_1$}
  \put(41,50){\scriptsize $N$}
  \put(141,50){\scriptsize $A_2$}
  \put(227,62){\scriptsize $M$}
  \put(214,62){\scriptsize $T_1$}
  \put(271,62){\scriptsize $T_2$}
  \put(311,45){\scriptsize $U_i$}
  \put(331,62){\scriptsize $A_1$}
  \put(242,62){\scriptsize $N$}
  \put(254,62){\scriptsize $A_2$}
\ep
}
\ee

Then applying the LTG-moves $\B_{A_1,A_2}^{-1}\circ \B_{A_1,T_2}^{-1}$ gives

\be
\raisebox{25pt}{
\bp(175,111)
\put(0,0){\rib{gg1-ass-1}}
\put(-40,40){$\bigoplus_{i\in\I}$}
  \put(17,93){\scriptsize $M$}
  \put(4,93){\scriptsize $T_1$}
  \put(84,93){\scriptsize $T_2$}
  \put(103,15){\scriptsize $U_i$}
  \put(47,93){\scriptsize $A_1$}
  \put(31,93){\scriptsize $N$}
  \put(62,93){\scriptsize $A_2$}
\ep
}
\ee
to which we now apply the adjunction \erf{eqn:Radj}.
After applying duality morphisms,
we obtain the following ribbon graph for the morphism on the right
hand side of (\ref{86}):

\be
\raisebox{25pt}{
\bp(240,140)
\put(20,0){\rib{gg1-ass-2}}
\put(-40,60){$\bigoplus_{i,j\in\I}\sum_\alpha$}
\put(230,60){$\otik$}
\put(195,70){\scriptsize $\alpha$}
\put(264,58){\scriptsize $\bar\alpha$}
  \put(63,117){\scriptsize $M$}
  \put(17,-13){\scriptsize $T_1^\vee$}
  \put(256,-13){\scriptsize $T_2^\vee$}
  \put(176,103){\scriptsize $T_2$}
  \put(148,39){\scriptsize $U_i$}
  \put(92,117){\scriptsize $A_1$}
  \put(77,117){\scriptsize $N$}
  \put(108,117){\scriptsize $A_2$}
  \put(255,117){\scriptsize $U_j$}
  \put(206,117){\scriptsize $U_j^\vee$}
\ep
}
\ee

The sum is taken over a basis of $\homc(U_j,T_2^\vee)$ and the corresponding dual basis of $\homc(T_2^\vee,U_j)$. It is obvious that the
morphism does not depend on the choice of basis.
To use again the Yoneda lemma, we evaluate the corresponding natural transformation on 
$T_1^\vee=MNU_i^\vee A_1$ and $T_2^\vee=U_iA_2$
for the identity. We get the following explicit formula for the
associativity constraint:

\be
\raisebox{25pt}{
\bp(60,90)
\put(40,0){\rib{gg1-ass-final}}
\put(-110,30){$\alpha_{M,N,A_1\times A_2}=\bigoplus_{i,j\in\I}\sum_\alpha$}
\put(127,30){$\otik$}
\put(99,27){\scriptsize $\alpha$}
\put(164,38){\scriptsize $\bar\alpha$}
  \put(36,72){\scriptsize $M$}
  \put(36,-9){\scriptsize $M$}
  \put(65,-9){\scriptsize $U_i^\vee$}
  \put(148,-9){\scriptsize $U_i$}
  \put(77,72){\scriptsize $A_1$}
  \put(77,-9){\scriptsize $A_1$}
  \put(50,72){\scriptsize $N$}
  \put(50,-9){\scriptsize $N$}
  \put(90,72){\scriptsize $A_2$}
  \put(165,-9){\scriptsize $A_2$}
  \put(155,72){\scriptsize $U_j$}
  \put(111,72){\scriptsize $U_j^\vee$}
\ep
}
\ee

For the associativity constraint $\alpha_{M,A_1\times A_2,N}$ gluing $S_4(1,g,1,g;1,1,1,1)$ over the four-punctured sphere $S_4$ gives, after removing the
cuts, the graphs

\be
\raisebox{20pt}{
\bp(300,130)
\put(0,0){\pic{1g1g-ass-1}\qquad\qquad\qquad\qquad \pic{1g1g-ass-2}}
\put(95,100){$M$}
\put(-5,-5){$N$}
\put(95,-5){$A_1\times A_2$}
\put(-29,100){$T_1\times T_2$}
\put(195,0){
\put(95,100){$M$}
\put(-5,-5){$N$}
\put(95,-5){$A_1\times A_2$}
\put(-29,100){$T_1\times T_2$}
}
\ep
}
\ee

This results in the following marking on the 6-punctured sphere
$S_6$:

\be
\raisebox{20pt}{
\bp(77,135)
\put(-37,0){\pic{1g1g-LTG-move}}
  \put(70,110){$M$}
  \put(-22,110){$T_1$}
  \put(-56,53){$T_2$}
  \put(68,-3){$A_2$}
  \put(100,53){$A_1$}
  \put(-22,-3){$N$}
\ep
}
\ee

This marking is transformed into the standard graph on $S_6$ by the following chain of LTG-moves:
\be
\B_{T_2T_1MA_1,A_2}^{-1}\circ\B_{NA_2,T_2}^{-1}\circ\B_{N,T_2}\circ\B_{T_1MA_1,A_2}
\ee
They eventually lead to the associativity isomorphism
\be
\raisebox{25pt}{
\bp(160,90)
\put(40,0){\rib{g1g-ass-final}}
\put(-70,35){$\alpha_{M, A_1\times A_2,N}=\bigoplus_{i\in\I}$}
\put(125,35){$\otik$}
  \put(36,77){\scriptsize $M$}
  \put(36,-5){\scriptsize $M$}
  \put(83,77){\scriptsize $N$}
  \put(83,-5){\scriptsize $N$}
  \put(99,-5){\scriptsize $U_i^\vee$}
  \put(151,-5){\scriptsize $U_i$}
  \put(99,77){\scriptsize $U_i^\vee$}
  \put(151,77){\scriptsize $U_i$}
  \put(51,77){\scriptsize $A_1$}
  \put(51,-5){\scriptsize $A_1$}
  \put(66,77){\scriptsize $A_2$}
  \put(66,-5){\scriptsize $A_2$}
\ep
}
\ee

For the associativity constraints $\alpha_{A_1\times A_2,M,N}$ gluing over $S_4$ gives the graphs 

\be
\raisebox{20pt}{
\bp(300,130)
\put(0,0){\pic{11gg-ass-1}\qquad\qquad\qquad\qquad \pic{11gg-ass-2}}
\put(95,100){$A_1\times A_2$}
\put(-5,-5){$N$}
\put(95,-5){$M$}
\put(-29,100){$T_1\times T_2$}
\put(195,0){
\put(95,100){$A_1\times A_2$}
\put(-5,-5){$N$}
\put(95,-5){$M$}
\put(-29,100){$T_1\times T_2$}
}
\ep
}
\ee

Transforming $S_4(1,1,g,g;1,1,1,1)$ to $S_6$ gives the following marking on $S_6$:

\be
\raisebox{20pt}{
\bp(77,125)
\put(-37,0){\pic{11gg-LTG-move}}
  \put(70,110){$A_1$}
  \put(-22,110){$T_1$}
  \put(-56,53){$A_2$}
  \put(68,-3){$N$}
  \put(100,53){$M$}
  \put(-22,-3){$T_2$}
\ep
}
\ee

The transformation into the standard graph on $S_6$ is just given by the LTG-move $\B_{T_1A_1,A_2}$. The procedure outlined before then gives
the constraint
\be
\raisebox{15pt}{
\bp(120,100)
\put(40,4){\rib{1gg-ass-final}}
\put(-100,35){$\alpha_{A_1\times A_2,M,N}=\bigoplus_{i\in\I}\sum_\alpha$}
\put(145,35){$\otik$}
  \put(66,77){\scriptsize $M$}
  \put(66,-5){\scriptsize $M$}
  \put(80,77){\scriptsize $N$}
  \put(80,-5){\scriptsize $N$}
  \put(97,-5){\scriptsize $U_i^\vee$}
  \put(185,-5){\scriptsize $U_i$}
  \put(129,77){\scriptsize $U_j^\vee$}
  \put(184,77){\scriptsize $U_j$}
  \put(36,77){\scriptsize $A_1$}
  \put(36,-5){\scriptsize $A_1$}
  \put(166,77){\scriptsize $A_2$}
  \put(48,-5){\scriptsize $A_2$}
  \put(122,45){\scriptsize $\alpha$}
  \put(192,42){\scriptsize $\bar\alpha$}
  \put(53,29){\scriptsize $\theta^{-1}$}
\ep
}
\ee
where the $\alpha$-summation is over a basis of $\homc(U_j,A_2^\vee U_i)$ and the corresponding dual basis of $\homc(A_2^\vee U_i,U_j)$.

\subsubsection{Three objects in the twisted component}
The last associativity isomorphism $\alpha_{M,N,O}$ for three objects $M,N,O$ in the twisted component of \CZ is more involved:
the total space $S_4(g,g,g,g;1,1,1,1)$ of the relevant cover
is a surface of genus one. The two gluing procedures over $S_4$ give the following markings:

\be
\raisebox{20pt}{
\bp(280,130)
\put(0,0){\pic{gggg-ass-1}\qquad\qquad\qquad\qquad \pic{gggg-ass-2}}
\put(95,100){$M$}
\put(-5,-5){$O$}
\put(95,-5){$N$}
\put(0,100){$T$}
\put(195,0){
\put(95,100){$M$}
\put(-5,-5){$O$}
\put(95,-5){$N$}
\put(0,100){$T$}
}
\ep
}
\ee

Since the cover is not a genus zero surface any longer, the
rules of the LTG do not allow us to remove the cuts and to
contract lines of the marking. Hence we keep lines indicating
the cuts which are drawn
with dotted lines. Transforming $S_4(g,g,g,g;1,1,1,1)$ into a 
standard block along the second marking gives the following
surface of genus one with four boundary components:

\be
\begin{split}
\raisebox{20pt}{
\bp(330,150)
\put(0,-40){\raisebox{38pt}{\pic{gggg-transform-1}}\qquad\qquad\qquad \pic{gggg-transform-2}}
\put(95,100){$M$}
\put(-5,-5){$O$}
\put(95,-5){$N$}
\put(0,100){$T$}
\put(278,80){$M$}
\put(165,-5){$O$}
\put(275,-5){$N$}
\put(165,80){$T$}
\put(130,40){$\congto$}
\ep
}\\
\raisebox{30pt}{
\bp(380,150)
\put(0,-20){\pic{gggg-transform-3}\qquad\qquad \pic{gggg-transform-4}}
\put(-30,40){$\congto$}
\put(165,95){$M$}
\put(50,-15){$O$}
\put(165,-15){$N$}
\put(55,95){$T$}
\put(378,95){$M$}
\put(265,-15){$O$}
\put(378,-15){$N$}
\put(265,95){$T$}
\put(180,40){$\congto$}
\ep
}
\end{split}
\ee

The transformation
\be
\raisebox{30pt}{
\bp(420,140)
\put(0,-20){\pic{gggg-transform-4}\qquad\qquad \pic{S14-standard-paths}}
\put(165,95){$M$}
\put(50,-15){$O$}
\put(165,-15){$N$}
\put(55,95){$T$}
\put(378,95){$M$}
\put(265,-15){$O$}
\put(378,-15){$N$}
\put(265,95){$T$}
\put(180,40){$\to$}
\ep
}
\ee
of the resulting marking into the standard marking on the four-holed torus is given by the following sequence of LTG-moves:
\be
\S^{-1}\circ\B_{TMN,\R^{(1)}}\circ\B_{O,\R^{(2)}}^{-1}
\ee
Translating these moves into a morphism in the modular
tensor category \C gives
\be
\raisebox{15pt}{
\bp(160,100)
\put(40,7){\rib{ggg-ass-final}}
\put(-90,35){$\alpha_{M,N,O}=\quad\bigoplus_{i,j\in\I}\frac{d_i}{\catdim}$}
  \put(36,77){\scriptsize $M$}
  \put(36,-5){\scriptsize $M$}
  \put(54,77){\scriptsize $N$}
  \put(54,-5){\scriptsize $N$}
  \put(72,77){\scriptsize $O$}
  \put(115,-5){\scriptsize $O$}
  \put(79,-5){\scriptsize $U_j^\vee$}
  \put(99,-5){\scriptsize $U_j$}
  \put(86,77){\scriptsize $U_i^\vee$}
  \put(106,77){\scriptsize $U_i$}
\ep
}
\label{eq:103}
\ee
with $\catdim=\sqrt{p^+p^-}$ and $p^\pm$ defined as in section \ref{ssect:Gmodcov}.
(Note that the morphism corresponding to the $\S$-move in \cite{BK}
is defined by a direct sum of morphisms $U_l U_l^\vee
\to  U_k U_k^\vee$; we have inserted two pairs of 
isomorphisms identifying for a simple object 
$U_k\cong U_{k^\ast}^\vee$  which cancel pairwise.)

These associativity constraints have to satisfy mixed pentagon 
axioms for any choice of four objects in the two sectors 
of \CZ, yielding in total 16 different types of pentagon
diagrams.
Theorem \ref{thm:GMF} asserts that our construction yields a $G$-equivariant
functor; the general results of \cite[Section 7.2]{kpGMF} then ensure that
all associativity constraints obtained in this section 
satisfy the pentagon axiom.
We have checked this by hand as well; 
only the pentagon with four objects in the twisted component is more involved.

\subsection{Tensoriality of the \ZZ-action}

We next derive the isomorphisms 
$\phi_{A,B}:\,\,  ^h(A\otimes B)\to {}^h\!A\otimes {}^h\!B$ that turn
the equivariance functor $R_h$ into a tensor functor.
They have been described in general before 
lemma \ref{prop:tensorf}. The functor $R_1$ is the 
identity functor and the tensoriality constraints $\phi_{A,B}^1$ 
are identity morphisms. The non-trivial element $g\in\ZZ$
acts by permutation of factors on the untwisted component
$\C\boxtimes\C$ and as the identity on the twisted component
$\C$. Hence we only compute the tensoriality constraint
$\phi_{A,B}^g$. 

Before we proceed, we will have a look at the $S_2$ cover of the form 
$(S_2(g,g;1,g)\to S_2)$, together with its marking. As a smooth manifold the total 
space $S_2(g,g;1,g)$ is diffeomorphic to $S_2(g,g;1,1)$ 
by a half turn around the second hole. However this is not a map of marked covers over $S_2$. We choose a half turn diffeomorphism
(say by rotating the second circle clockwise by $\pi$)
to identify the total spaces $S_2(g,g;1,g)$ and 
$S_2(g,g;1,1)$. Both spaces are diffeomorphic to the
two-punctured sphere $S_2$ and the corresponding
diffeomorphism of $S_2$ induces a natural isomorphism 
$\homc(\unit,UV)\cong\homc(\unit,U{}^gV)$. Using the duality,
we get an isomorphism $\sigma_V:V\to{}^gV=V$. We use this isomorphism to identify the respective hom-spaces.
On marking graphs this introduces an additional move which we call the $\sigma$-move,

\be
\raisebox{20pt}{
\bp(250,60)
\put(0,0){\pic{S2-path-sigma}\qquad\qquad \pic{S2-path}}
\put(-12,35){$A$}
\put(90,35){$B$}
\put(105,15){$\stackrel{\sigma_B}{\longrightarrow}$}
\put(125,35){$A$}
\put(225,35){$B$}
\ep
}
\ee
on the two-punctured sphere $S_2$ and
\be
\raisebox{20pt}{
\bp(250,60)
\put(0,0){\pic{gg-path-sigma}\qquad\qquad \pic{gg-path}}
\put(-12,35){$A$}
\put(90,35){$B$}
\put(105,15){$\stackrel{\sigma_B}{\longrightarrow}$}
\put(125,35){$A$}
\put(225,35){$B$}
\ep
}
\label{105}
\ee
on the total space $S_2(g,g;1,g)$ of the cover.
Applying the half-turn diffeomorphism twice is the Dehn-twist, 
hence we have the identity $\sigma_V^2=\theta_V \in\hom_\C(V,V)$. 

From now on we will draw the standard spheres $S_n$ as 
the one-point compactification
$\overline{\mathbb{C}}$ of the complex plane with $n$ discs 
of radius $1/3$ centered at $0,1,2,\ldots n-1$
removed. As in subsection \ref{sssect:covcirc}, the standard blocks 
are obtained by identifying the trivial cover of the cut sphere 
along the cuts, i.e.\
as $(S_n\!\setminus\!\mbox{cuts}\times G)/\!\sim$. For instance the marking graphs on covers of $S_3$ that represent the tensor products are shown in figure \ref{fig:tensorp}. 
\begin{figure}[tb]
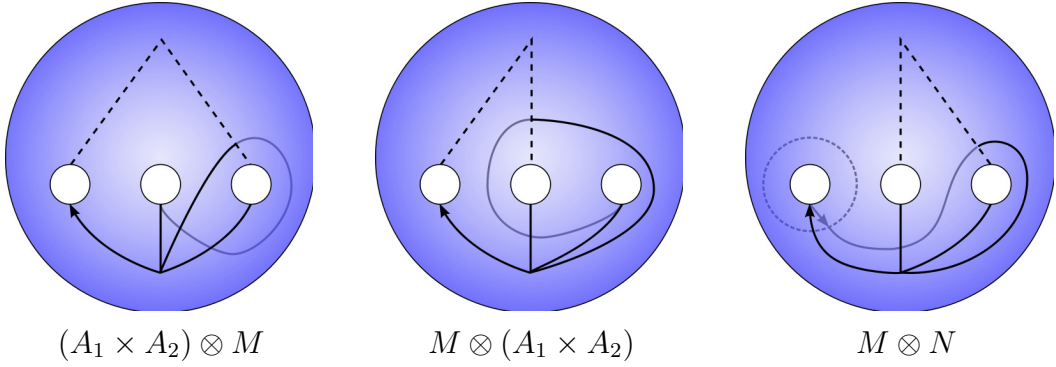

\raisebox{20pt}{
\bp(400,146)
\put(0,0){\pic{g1g-circle}}
\put(20,-15){$(A_1\times A_2)\otimes M$}
\put(140,0){\pic{gg1-circle}}
\put(160,-15){$M\otimes (A_1\times A_2)$}
\put(280,0){\pic{1gg-circle}}
\put(322,-15){$M\otimes N$}
\ep
}
\caption{The marking graphs on $S_3$-covers that represent the non-trivial tensor products.
We show three Riemann spheres $\overline{\mathbb C}$ from above.
The arrow points to the
disc where the test object $T$ is inserted. All covers are twofold; their
total space has the topology of a four-holed sphere.
The dashed line is a branch cut linking the two insertions with non-trivial monodromy
and indicates a self-intersection in the immersion of the total space of
the cover into three-dimensional space. 
 }\label{fig:tensorp}
\end{figure}

We are now ready to compute the tensoriality constraint
$\phi_{A,B}^g$. We apply the $\sigma$-move  introduced in
picture (\ref{105}) to the relevant cover of $S_2$ obtain the marking graph 

\be
\raisebox{20pt}{
\bp(150,140)
\put(0,0){\pic{S2-circle-sigma}}
\put(28,42){\scriptsize $A$}
\put(81,42){\scriptsize $B$}
\ep
}
\ee 
on $S_2(g,g;1,g)$.
Then we apply the sequence of transformations in \erf{eq:tensorf}
on $S_2(g,g;1,g)$ and translate them into morphisms, which
have to be applied after the morphism $\sigma$.

We outline our procedure to determine the tensoriality 
constraint $\phi_{A,B}^g$:

\begin{enumerate}
\item Determine the cover of the two-punctured sphere
that is appropriate for the pair of objects $(A,B)$.

\item If the tensor product takes its value in the twisted component,
$A\otimes B\in \CZ_g$, use the half-twist $\sigma$ to 
identify the covers $S_2(g,g;1,1)$ and $S_2(g,g;1,g)$.
\item Apply the diffeomorphism $\tilde g$ from equation (\ref{18} )
to the cover.
\item Now we can glue in the cover of the three-punctured sphere $S_3$ 
that implements the tensor product $A\otimes B$.
\item Apply the diffeomorphism $\widetilde{g^{-1}}=\tilde g$
again.
\item Transform the resulting cover of $S_3$ back to a standard block using the marking from the tensor product ${}^gA\otimes{}^gB$.
\item Read off the LTG-moves that transform the resulting marking into the standard marking. If $A\otimes B$ is in the untwisted
component, this is the tensoriality constraint. Otherwise, if
$A\otimes B\in \CZ_g$, the tensoriality constraint is obtained by
first applying  the morphism $\sigma_{A\otimes B}$ to 
${}^g(A\otimes B)$ to take into account step 2, and then the LTG-moves.
\end{enumerate}

The choice of the diffeomorphism $\sigma$ to identify 
the total spaces $S_2(g,g;1,1)$ and $S_2(g,g;1,g)$ is  
non-canonical.  Since the mapping class group of the cylinder is generated by the Dehn-twist, different choices of identifications differ by powers of the Dehn-twist, which in our conventions is the square
of $\sigma$. A different choice of the identification
of $S_2(g,g;1,1)$ and $S_2(g,g;1,g)$ gives different
markings on the cover of $S_3$ in step 6 of our procedure.
As a consequence, the LTG-moves will differ by powers of the 
twist on ${}^g(A\otimes B)$. The difference then
cancels in step 7, so that the tensoriality constraint is
independent of the choice of identifications of
$S_2(g,g;1,1)$\ and $S_2(g,g;1,g)$. 

We will start by deriving the tensoriality constraint
$\phi_{A_1\times A_2,B_1\times B_2}^g$
for $g$ on the tensor product of two objects in the untwisted
component. Our procedure gives the standard marking

\be
\raisebox{20pt}{
\bp(150,140)
\put(0,0){\pic{S3-circle-paths}}
\put(5,60){\scriptsize $A_1\times A_2$}
\put(42,60){\scriptsize $B_1\times B_2$}
\put(80,60){\scriptsize $C_1\times C_2$}
\ep
}
\ee
hence we deduce that the constraint is the identity,
$\phi_{A_1^\times A_2,B_1\times B_2}^g=\id_{A_2B_2\times A_1B_1}$.

For the constraint $\phi_{M,A_1\times A_2}^g$, we first have
to use $\sigma$  to identify $S_2(g,g;1,1)$ and 
$S_2(g,g;1,g)$; then we apply the sequence of operations described
in \erf{eq:tensorf}:

\be
\begin{split}
\raisebox{20pt}{
\bp(400,146)
\put(0,0){\pic{S2-circle-paths}}
\put(140,0){\pic{S2-circle-sigma}}
\put(280,0){\pic{g1-tensor-2}}
\put(122,50){$\stackrel{\sigma}{\to}$}
\put(262,50){$\stackrel{(\tilde g)_\ast}{\to}$}
\put(29,43){\scriptsize $T$}
\put(82,43){\scriptsize $MA_1A_2$}
\put(169,43){\scriptsize $T$}
\put(222,43){\scriptsize $MA_1A_2$}
\put(308,43){\scriptsize $T$}
\put(362,43){\scriptsize $MA_1A_2$}
\ep
}\\
\raisebox{20pt}{
\bp(400,146)
\put(0,0){\pic{g1-tensor-3}}
\put(140,0){\pic{g1-tensor-4}}
\put(280,0){\pic{g1-tensor-5}}
\put(-22,50){$\to$}
\put(122,50){$\to$}
\put(262,50){$\stackrel{(\tilde g)_\ast}{\to}$}
\put(22,45){\scriptsize $T$}
\put(55,45){\scriptsize $M$}
\put(73,45){\scriptsize $A_1\times A_2$}
\put(162,45){\scriptsize $T$}
\put(194,45){\scriptsize $M$}
\put(213,45){\scriptsize $A_1\times A_2$}
\put(301,45){\scriptsize $T$}
\put(333,45){\scriptsize $M$}
\put(353,45){\scriptsize $A_2\times A_1$}
\ep
}\\
\raisebox{20pt}{
\bp(400,146)
\put(0,0){\pic{g1-tensor-6}}
\put(-22,50){$\to$}
\put(21,45){\scriptsize $T$}
\put(55,45){\scriptsize $M$}
\put(77,45){\scriptsize $A_2\times A_1$}
\ep
}
\end{split}
\ee
In the first step we apply the half-twist $\sigma_{MA_1A_2}$, in the second step the application of $\tilde g$ as defined in
(\ref{18}) exchanges the two sheets
of the cover. In the third step we glue in the three-puncture
sphere $S_3(g,g,1;1,1,1)$ with the marking representing the 
tensor product $M\otimes (A_1\times A_2)$, see figure 1. In the fourth step we perform the gluing on the marking graph. In the fifth step we apply $\tilde g$ again, 
which in particular exchanges the holes labelled by $A_1$ and $A_2$. 
The last step is merely a simplification of the 
graph: we move the lines around the back side of the sphere. 

We use the marking on $S_3(g,g,1;1,1,1)$ in figure 1 that represents the tensor 
product $M\otimes (A_2\times A_1)$ to get an isomorphism to $S_4$. This
marking instructs us to move the disc labelled by $A_1$ around the
disc labelled by $M$. This yields the left figure in the following line:
\be
\raisebox{20pt}{
\bp(400,146)
\put(0,0){\pic{g1-tensor-back-1}}
\put(140,0){\pic{g1-tensor-back-2}}
\put(280,0){\pic{g1-tensor-back-3}}
\put(15,50){\scriptsize $T$}
\put(41,50){\scriptsize $M$}
\put(67,50){\scriptsize $A_2$}
\put(95,50){\scriptsize $A_1$}
\put(140,0){
\put(15,50){\scriptsize $T$}
\put(46,50){\scriptsize $M$}
\put(67,50){\scriptsize $A_2$}
\put(95,50){\scriptsize $A_1$}
}
\put(280,0){
\put(14,50){\scriptsize $T$}
\put(40,50){\scriptsize $M$}
\put(67,50){\scriptsize $A_2$}
\put(95,50){\scriptsize $A_1$}
}
\ep
}
\label{eq:109}
\ee
In the second picture we redraw the marking graph in a more convenient shape. 
The last picture is obtained by a diffeomorphism of non-embedded manifolds.

The marking graph in the third picture of (\ref{eq:109})
is transformed into the standard marking graph on $S_4$ 
by the following sequence of LTG-moves:
\be
\theta_{A_1}^{-1}\circ\sigma_M\circ \B_{MA_2,A_1}^{-1}\circ\B_{A_2,M}\circ\B_{T,M}
\ee
where $\theta$ is an abbreviation for the Dehn twist move 
$\Z\circ\B^{-1}$. 
When we translate the LTG moves into morphisms, several
Dehn twists
occur in manipulations of the ribbon graphs.
They have to
be combined with the morphism $\sigma$ which 
squares to the twist. This leads to powers of $\sigma$ that
differ from the naive expectations, and 
we arrive at the tensoriality constraint
\be
\raisebox{20pt}{
\bp(110,110)
\put(40,0){\rib{g1-tensor}}
\put(-40,40){$\phi^g_{M,A_1\times A_2}=$}
\put(48,-9){\scriptsize $M$}
\put(69,-9){\scriptsize $A_1$}
\put(88,-9){\scriptsize $A_2$}
\put(38,90){\scriptsize $M$}
\put(59,90){\scriptsize $A_2$}
\put(79,90){\scriptsize $A_1$}
\put(36,16){\scriptsize $\sigma$}
\put(24,75){\scriptsize $\sigma^{-1}$}
\put(90,75){\scriptsize $\theta^{-1}$}
\ep
}
\ee
where we have included $\sigma_{MA_1A_2}$ according to our
general prescription, step 7.

For tensoriality constraint $\phi_{A_1\times A_2,M}^g$, 
we proceed in a similar way. Again we identify the
total spaces of the covers $S_2(g,g;1,1)$ and $S_2(g,g;1,g)$ 
with the diffeomorphism $\sigma$ and apply \erf{eq:tensorf}. This gives:

\be
\raisebox{20pt}{
\bp(400,136)
\put(30,0){\pic{1g-tensor}}
\put(250,0){\pic{1g-tensor-LTG-move}}
\put(51,45){\scriptsize $T$}
\put(119,45){\scriptsize $M$}
\put(79,45){\scriptsize $A_2\times A_1$}
\put(265,56){\scriptsize $T$}
\put(345,56){\scriptsize $M$}
\put(318,56){\scriptsize $A_1$}
\put(290,56){\scriptsize $A_2$}
\ep
}
\ee
The first picture is the result of \erf{eq:tensorf}. The second picture is the result of transforming $S_3(g,1,g;1,1,1)$ to $S_4$ using the marking on $S_3(g,1,g;1,1,1)$ that represents the tensor product ${}^g(A_1\times A_2)\otimes {}^gM=(A_2\times A_1)\otimes M$.

The resulting marking graph is transformed into the standard graph by the sequence
\be
\B_{A_2,A_1}^{-1}\circ\B_{A_2,M}^{-1}\circ\theta_{A_2}^{-1}\circ\B_{M,A_2}^{-1}\circ\sigma_M^{-1}
\ee
of LTG-moves. This gives the morphism
\be
\raisebox{25pt}{
\bp(160,110)
\put(40,0){\rib{1g-tensor-rib}}
\put(-40,45){$\phi^g_{A_1\times A_2,M}=$}
\put(82,-9){\scriptsize $M$}
\put(43,-9){\scriptsize $A_1$}
\put(61,-9){\scriptsize $A_2$}
\put(91,90){\scriptsize $M$}
\put(71,90){\scriptsize $A_1$}
\put(49,90){\scriptsize $A_2$}
\put(34,16){\scriptsize $\sigma$}
\put(101,75){\scriptsize $\sigma^{-1}$}
\put(35,75){\scriptsize $\theta^{-1}$}
\ep
}
\ee

The last tensoriality constraint to be determined is $\phi_{M,N}^g$. 
We apply \erf{eq:tensorf} to $S_2(1,1;1,1)$ and get
\be
\raisebox{20pt}{
\bp(400,146)
\put(30,0){\pic{gg-tensor}}
\put(250,0){\pic{gg-tensor-ltg-move}}
\put(39,45){\scriptsize $T_1\times T_2$}
\put(119,45){\scriptsize $N$}
\put(84,45){\scriptsize $M$}
\put(265,56){\scriptsize $T_1$}
\put(345,56){\scriptsize $T_2$}
\put(316,56){\scriptsize $N$}
\put(288,56){\scriptsize $M$}
\ep
}
\ee
The first picture is again the result of \erf{eq:tensorf}, the second picture is the marking graph we obtain on $S_4$. Now this marking is transformed into the standard marking by the LTG-moves
\be
\B_{NT_2,M}\circ\B_{N,T_2}^{-1}\circ\B_{N,M}^{-1}\circ\sigma_N^{-1}\circ\sigma_M
\ee
Recall that ${}^g(M\otimes N)=\bigoplus U_i\times MNU_i^\vee$. 
Hence, we have to apply the corresponding transformations to $\hom_\C(\unit, T_1U_i)\otik\hom_\C(\unit,T_2MNU_i^\vee)\cong\hom_\C(\unit,T_1T_2MN)$ and finally use the adjunction \erf{eqn:Radj} to obtain
\be
\raisebox{20pt}{
\bp(160,90)
\put(40,0){\rib{gg-tensor-rib}}
\put(-80,35){$\phi^g_{M,N}=\quad\bigoplus_{i,j\in\I}\sum_\alpha$}
\put(97,35){$\otik$}
\put(38,73){\scriptsize $M$}
\put(45,-9){\scriptsize $U_i$}
\put(142,-9){\scriptsize $U_i^\vee$}
\put(116,-9){\scriptsize $M$}
\put(129,-9){\scriptsize $N$}
\put(58,73){\scriptsize $N$}
\put(75,73){\scriptsize $U_j^\vee$}
\put(129,73){\scriptsize $U_j$}
\put(70,20){\scriptsize $\alpha$}
\put(138,38){\scriptsize $\bar\alpha$}
\put(68,55){\scriptsize $\sigma$}
\put(25,55){\scriptsize $\sigma^{-1}$}
\ep
}
\ee
where the summation over $\alpha$ runs over a basis of $\homc(U_j,MNU_i^\vee)$ and the corresponding dual basis. 
Again, we had to take into account Dehn twists which shifted
the powers of $\sigma$.

We have checked directly that all morphisms derived 
indeed satisfy all identities needed to endow 
the functor $R_g$ with the structure of a tensor functor.

\subsection{The braiding}

We finally derive the braiding on the $\ZZ$-equivariant 
category $\CX$
which consists of isomorphisms 
$C_{U,V}:U\otimes V\congto {}^pV\otimes U$ with $U\in\CZ_p$ and $V\in\CZ_q$. To this end, we lift the braiding diffeomorphism
\be
\raisebox{20pt}{
\bp(146,146)
\put(0,0){\pic{braiding-diffeo}}
\ep
}
\ee
of the three-holed sphere $S_3$ to appropriate covers and obtain
a diffeomorphism
\be
\tilde\phi_\B:S_3((pq)^{-1},p,q;1,1,1)\congto S_3((pq)^{-1},pqp^{-1},p;1,p^{-1},1) \,\, .
\label{eq:119}
\ee
It induces for any object $T\in\CZ$ a natural isomorphism
\be
\begin{split}
&\la T,U\otimes V\rag\defn\la T,U,V\rag\defn\tauzz(S_3((pq)^{-1},p,q;1,1,1);T,U,V)\stackrel{(\tilde\phi_\B)_\ast}{\to}\\
&\tauzz(S_3((pq)^{-1},pqp^{-1},p;1,p^{-1},1);T,V,U)=\\
&\tauzz(S_3((pq)^{-1},pqp^{-1},p;1,1,1);T,{}^pV,U)\defn
\la T,{}^pV,U\rag\defn\la T,{}^pV\otimes U\rag.
\end{split}
\ee
Thus the procedure to determine braidings is analogous to
the one to determine the associativity constraints:
\begin{enumerate}
\item Start with the standard marking on the cover $S_3((pq)^{-1},p,q;1,1,1)$
that represents the tensor product $U\otimes V$.
\item Apply the diffeomorphism $\tilde\phi_\B$ defined in 
(\ref{eq:119}).
\item The result is the cover $S_3((pq)^{-1},pqp^{-1},p;1,p^{-1},1)$ 
and has to be transformed into a standard block, using the 
marking representing the tensor product ${}^pV\otimes U$.
\item Next use the LTG-moves to transform the resulting 
marking graph on the standard block into the standard marking 
graph.  
\item Finally translate the LTG-moves into morphisms in \C 
or \CC, respectively.
\end{enumerate}

Not surprisingly, the braiding of two objects $A_1\times A_2$ and $B_1\times B_2$ in the neutral component of \CZ turns out to be the braiding on \CC. To see this, we lift the braiding diffeomorphism $\phi_\B$ to $S_3(1,1,1;1,1,1)$.
As a smooth manifold, this is just isomorphic to the disjoint
union $S_3\sqcup S_3$ of two three-holed spheres.
The lift $\tilde\phi_\B$ of the braiding isomorphism 
is just $\phi_\B$ applied to both components. Hence
\be
C_{A_1\times A_2,B_1\times B_2}=c_{A_1,B_1}\otik c_{A_2,B_2}
\,\,\, .
\ee
We now discuss the more complicated situations. For the braiding $C_{A_1\times A_2,M}:(A_1\times A_2)\otimes M\to M\otimes (A_1\times A_2)$, we
have to consider the cover $S_3(g,1,g;1,1,1)$ of $S_3$ with its standard marking graph. Lifting the braiding $\phi_\B$ gives the marking

\be
\raisebox{20pt}{
\bp(400,126)
\put(30,0){\pic{g1g-circle}}
\put(240,0){\pic{g1g-braided}}
\put(185,50){$\stackrel{\tilde \phi_\B}{\rightarrow}$}
\put(51,45){\scriptsize $T$}
\put(119,45){\scriptsize $M$}
\put(70,45){\scriptsize $A_1\times A_2$}
\put(261,45){\scriptsize $T$}
\put(294,45){\scriptsize $M$}
\put(318,45){\scriptsize $A_1\times A_2$}
\ep
}
\ee
on $S_3(g,g,1;1,1,1)$. Applying the diffeomorphism to $S_4$
that represents
the tensor product 
${}^1M\otimes (A_1\times A_2)=M\otimes (A_1\times A_2)$  gives the marking

\be
\raisebox{20pt}{
\bp(146,146)
\put(0,0){\pic{1g-braided-LTG-move}}
\put(67,56){\scriptsize $A_1$}
\put(15,56){\scriptsize $T$}
\put(40,56){\scriptsize $M$}
\put(94,56){\scriptsize $A_2$}
\ep
}
\ee 
on $S_4$. It is connected to the standard marking of the
four-punctured sphere $S_4$ by the following sequence of LTG-moves:
\be
\B_{A_1,M}\circ\B_{T,A_2}\circ\B_{A_1,A_2}
\ee
Applied to $\homc(\unit,TA_1A_2M)$, this induces the 
following braiding isomorphism on $\CX$
\be
\raisebox{25pt}{
\bp(160,90)
\put(40,0){\rib{1g-braiding}}
\put(-40,35){$C_{A_1\times A_2,M}=$}
\put(44,69){\scriptsize $M$}
\put(65,69){\scriptsize $A_1$}
\put(86,69){\scriptsize $A_2$}
\put(36,-9){\scriptsize $A_1$}
\put(57,-9){\scriptsize $A_2$}
\put(79,-9){\scriptsize $M$}
\put(94,45){\scriptsize $\theta^{-1}$}
\ep
}
\ee

For the braiding $C_{M,A_1\times A_2}:M\otimes (A_1\times A_2)\to {}^g(A_1\times A_2)\otimes M=(A_2\times A_1)\otimes M$ we consider the cover $S_3(g,g,1;1,1,1)$ of $S_3$ with its standard marking graph. We lift the braiding  $\phi_\B$ and get the marking

\be
\raisebox{20pt}{
\bp(146,146)
\put(0,0){\pic{gg1-braided}}
\put(21,45){\scriptsize $T$}
\put(88,45){\scriptsize $M$}
\put(40,45){\scriptsize $A_2\times A_1$}
\ep
}
\ee
on $S_3(g,1,g;1,g,1)$. 
Note that the insertions of $A_1$ and $A_2$ have exchanged the sheet, when they were moved pass the self-intersection.
We apply the diffeomorphism to $S_4$ given by the marking on $S_3(g,1,g;1,g,1)$ that represents the tensor product ${}^g(A_1\times A_2)\otimes M=(A_2\times A_1)\otimes M$. The result is the following
marking on $S_4$:

\be
\raisebox{20pt}{
\bp(146,146)
\put(0,0){\pic{g1-braided-ltg-move}}
\put(67,56){\scriptsize $A_1$}
\put(15,56){\scriptsize $T$}
\put(40,56){\scriptsize $A_2$}
\put(94,56){\scriptsize $M$}
\ep
}
\ee

This marking on $S_4$ is transformed into the standard marking on $S_4$ by the LTG-moves
\be
\B_{A_1M,A_2}\circ\B_{M,A_1}.
\ee
We get the braiding isomorphism on $\CX$:
\be
\raisebox{25pt}{
\bp(160,90)
\put(40,0){\rib{g1-braiding}}
\put(-40,35){$C_{M,A_1\times A_2}=$}
\put(39,69){\scriptsize $A_2$}
\put(60,69){\scriptsize $A_1$}
\put(83,69){\scriptsize $M$}
\put(36,-9){\scriptsize $M$}
\put(60,-9){\scriptsize $A_1$}
\put(83,-9){\scriptsize $A_2$}
\ep
}
\ee
Finally we describe the braiding isomorphism $C_{M,N}:M\otimes N\to {}^gN\otimes M=N\otimes M$. We lift the braiding $\phi_\B$ to $S_3(1,g,g;1,1,1)$. The standard marking on $S_3(1,g,g;1,1,1)$ involves a cut: 
we first remove this cut 
and then apply $\tilde\phi_\B$:

\be
\raisebox{20pt}{
\bp(400,146)
\put(30,0){\pic{1gg-circle}}
\put(240,0){\pic{1gg-braided}}
\put(185,50){$\stackrel{\tilde \phi_\B}{\rightarrow}$}
\put(40,45){\scriptsize $T_1\times T_2$}
\put(84,45){\scriptsize $M$}
\put(119,45){\scriptsize $N$}
\put(249,45){\scriptsize $T_1\times T_2$}
\put(328,45){\scriptsize $M$}
\put(294,45){\scriptsize $N$}
\ep
}
\ee
We transform the resulting manifold into $S_4$ and arrive at

\be
\raisebox{20pt}{
\bp(146,146)
\put(0,0){\pic{gg-braided-LTG-move}}
\put(67,56){\scriptsize $M$}
\put(15,56){\scriptsize $T_1$}
\put(41,56){\scriptsize $N$}
\put(94,56){\scriptsize $T_2$}
\ep
}
\ee 
This marking is transformed into the standard marking on $S_4$ by the following sequence of moves:
\be
\B_{T_1,N}^{-1}\circ\B_{M,T_2}^{-1}\circ\B_{T_1,T_2}^{-1}\circ\sigma_N^{-1}
\ee
When we apply this to $\homc(\unit,T_1MNT_2)$ and perform the gluing 
process in the adjunction (\ref{55}), we arrive at
\be
\raisebox{25pt}{
\bp(160,90)
\put(40,0){\rib{gg-braiding}}
\put(-50,30){$C_{M,N}=\bigoplus_{i\in\I}$}
\put(105,30){$\otik$}
\put(44,74){\scriptsize $N$}
\put(65,74){\scriptsize $M$}
\put(85,74){\scriptsize $U_i^\vee$}
\put(135,74){\scriptsize $U_i$}
\put(36,-9){\scriptsize $M$}
\put(56,-9){\scriptsize $N$}
\put(76,-9){\scriptsize $U_i^\vee$}
\put(135,-9){\scriptsize $U_i$}
\put(32,58){\scriptsize $\sigma_N$}
\put(95,58){\scriptsize $\theta_{U_i^\vee}$}
\ep
}
\ee
Again powers of $\sigma$ are changed by taking into account
Dehn twists.

By theorem \ref{thm:GMF}, our construction yields a $G$-equivariant
modular functor, and by the general results of \cite{kpGMF}, 
braiding morphisms obtained from a 
$G$-equivariant modular functor satisfy $\ZZ$-equivariant 
generalizations of the hexagon axioms. We have also directly
verified that the morphisms presented in this subsection
satisfy the hexagon axioms.

\subsection{Equivariant ribbon structure}\label{sec.4.7}

We now return to study the existence of a
\ZZ-equivariant ribbon structure on \CZ. The general results
of \cite{kpGMF} ensure that \CZ has a twist but do not
guarantee the existence of duality morphisms like 
the evaluation $D_U:U\otimes U^\ast\to\unit$. 
However, in the case of $\ZZ$-permutation equivariant categories, there are indeed 
compatible duality morphisms that endow $\CZ$ with 
a ribbon structure.

We obtain \cite[Section 7.2]{kpGMF} the twist morphism $\Theta_U:U\to {}^pU$ for $U\in\CZ_p$ 
by the Yoneda lemma from a natural transformation of functors
\be
\begin{split}
\tauzz(S_2(p,p^{-1};1,1);U,T)&\stackrel{(\tilde\phi_\B^{-1})_\ast}{\to}\tauzz(S_2(p^{-1},p;1,p^{-1});T,U)\\
&\stackrel{T_{(1,p)}}{=}\tauzz(S_2(p^{-1},p;1,1);T,{}^pU)\\
&\stackrel{(\tilde\phi_\Z)_\ast}{\to}\tauzz(S_2(p,p^{-1};1,1);{}^pU,T) \,\, .
\end{split}
\ee
Here  $\phi_\B$ is the braiding of the two holes of $S_2$ and 
$\tilde\phi_\B$ its lift to the cover $S_2(p^{-1},p;1,p^{-1})$. 
The equivariance morphism $T_{(1,p)}$ is, in our case, the identity.
Finally, $\phi_\Z$ is the diffeomorphism of the standard sphere inducing
a cyclic move of the distinguished edge, and $\tilde\phi_\Z$ is its
lift.

For an object $U=A_1\times A_2$ in the untwisted component of $\CZ$, 
the total space $S_2(1,1;1,1)$ of the cover is again just a disjoint union 
of two copies of $S_2$ with the lifts of $\phi_\B^{-1}$ and $\phi_\Z$ being 
the application of the diffeomorphisms to both components separately. Hence the twist on $\CZ_1$ is 
just the usual twist on the category \CC:
\be
\Theta_{A_1\times A_2}=\theta_{A_1}\otik\theta_{A_2}
\ee

To calculate the twist morphism for an object $U=M\in\CZ_g$ in the twisted component,
our moves amount to
\be
\raisebox{20pt}{
\bp(400,146)
\put(0,0){\pic{S2-circle-paths}}
\put(140,0){\pic{g-twist-2}}
\put(280,0){\pic{g-twist-3}}
\put(123,52){$\stackrel{\tilde\phi_\B^{-1}}{\to}$}
\put(263,52){$\stackrel{\tilde\phi_\Z}{\to}$}
\put(83,42){\scriptsize $T$}
\put(28,42){\scriptsize $M$}
\put(221,42){\scriptsize $M$}
\put(169,42){\scriptsize $T$}
\put(360,42){\scriptsize $M$}
\put(308,42){\scriptsize $T$}
\ep
}
\ee

Transforming the total space of this cover into the standard sphere $S_2$ 
gives the following marking:

\be
\raisebox{20pt}{
\bp(400,146)
\put(60,0){\pic{g-twist-LTG-move}}
\put(220,0){\pic{g-twist-LTG-move-2}}
\put(193,52){$=$}
\put(87,53){\scriptsize $T$}
\put(142,53){\scriptsize $M$}
\put(300,53){\scriptsize $M$}
\put(248,53){\scriptsize $T$}
\ep
}
\ee
Here, we redrew the figure by pulling the line connecting to $M$ along the
backside of the sphere. The last figure is transformed into the standard 
marking on $S_2$ by applying the half turn move $\sigma_M$. 
Hence we find
\be
\Theta_M=\sigma_M \,\, \, .
\label{141}
\ee
Permutation orbifolds of rational conformal field theories have been
analyzed with representation theoretic tools in \cite{bohs}. The formula
(\ref{141}) for the twist in the twisted component is in full agreement with formula
\cite[(4.21)]{bohs} for the conformal weights.

Up to this point, we have derived all structure on \CZ from 
the approach of \ZZ-equivariant modular functors \cite{kpGMF}. In fact, we have
fully exploited this ansatz and obtained the structure of a 
weakly rigid \ZZ-equivariant monoidal category.
We next check that the condition in proposition \ref{prop:conditionrigid} 
that ensures that the tensor category \CZ is even rigid is satisfied.
The criterion is easy to check for simple objects $U_i\times U_j$ in the 
untwisted component of \CZ. In this case the morphism
\be
i_{U_i\times U_j}:\unit\times\unit\to U_iU_i^\vee\times U_jU_j^\vee
\ee
is just given by the tensor product of two coevaluations, 
\be
i_{U_i\times U_j}=b_{U_i}\otik b_{U_j} \,\, . 
\ee
The morphisms in proposition \ref{prop:conditionrigid} $a_{i\times j}$ are non-zero by rigidity of \CC. The ribbon 
structure on the neutral component $\CZ_1$ is then just the usual ribbon 
structure on \CC.

Now we look at a simple object $U_i$ in $\CZ_g$. As we have seen before, the dual object in \CX of $U_i$ is $U_i^\vee$. The adjunction \erf{eqn:Radj} with $T_1=T_2=\unit$ and $N=U_i^\vee$ gives isomorphisms
\be
\homc(U_i^\vee,U_i^\vee)\cong\homc(\unit,U_iU_i^\vee)\cong\homcc(\unit\times\unit,R(U_iU_i^\vee))
\ee
Evaluating the isomorphism on $\id_{U_i^\vee}$ gives a morphism in 
$\bigoplus_{j\in\I}\homcc(\unit\times\unit,U_iU_i^\vee U_j^\vee\times U_j)$
whose only non-vanishing component appears for $j=0$. It reads
\be
\raisebox{15pt}{
\bp(160,90)
\put(40,0){\rib{duality-adjunction}}
\put(65,30){$\otik$}
\put(-15,30){$i_{U_i}:=$}
\put(88,-11){\scriptsize $\unit$}
\put(88,69){\scriptsize $\unit$}
\put(36,69){\scriptsize $U_i$}
\put(52,69){\scriptsize $U_i^\vee$}
\ep
}
\ee

Now compute 
$\alpha_{U_i^\vee,U_i,U_i^\vee}^{-1}\circ (\id_{U_i^\vee}\otimes i_{U_i})$
using equation (\ref{eq:103}) for the associativity 
constraint:
\be
\raisebox{25pt}{
\bp(160,90)
\put(130,0){\rib{ass-duality}}
\put(-70,35){$\alpha_{U_i^\vee,U_i,U_i^\vee}^{-1}\circ (\id_{U_i^\vee}\otimes i_{U_i})=\quad\bigoplus_{k\in\I}\frac{d_k}{\catdim}$}
\put(168,68){\scriptsize $U_k$}
\put(153,68){\scriptsize $U_k^\vee$}
\put(141,68){\scriptsize $U_i$}
\put(126,68){\scriptsize $U_i^\vee$}
\put(182,68){\scriptsize $U_i^\vee$}
\put(126,-10){\scriptsize $U_i^\vee$}
\ep
}
\ee
Here $d_k$ is the dimension of the simple object $U_k$ and
$\catdim$ is the dimension of the category $\C$ introduced
in \erf{catdim}.
The morphism
$$ a_i: \unit\times\unit \to \bigoplus_k U_i^\vee U_i U_k^\vee \times U_k $$
introduced in proposition \ref{prop:conditionrigid} can only have a 
non-vanishing component for $k=0$,
$$ a^{(0)}_i:  \unit\times\unit \to  U_i^\vee U_i\times \unit \,\, . $$
Since the tensor unit $\unit$ is absolutely simple, i.e.\ $\mathrm{End}(\unit)
= \Bbbk \id_\unit$, this component $a^{(0)}_i$ is of the form
$$ a^{(0)}_i=\overline{a_i}\otik \id_\unit \,\, $$
with $\overline{a_i}\in \homc(\unit,U_i^\vee U_i)$. We then have
\be
\raisebox{25pt}{
\bp(160,90)
\put(70,0){\rib{duality-1}}
\put(-80,30){$\alpha_{U_i^\vee,U_i,U_i^\vee}^{-1}\circ (\id_{U_i^\vee}\otimes i_{U_i})=$}
\put(130,30){$+\sum_{k\neq 0} \,\, \dots$}
\put(98,36){\scriptsize $\overline{a_i}$}
\put(71,68){\scriptsize $U_i^\vee$}
\put(112,68){\scriptsize $U_i^\vee$}
\put(112,-10){\scriptsize $U_i^\vee$}
\put(88,68){\scriptsize $U_i$}
\ep
}
\ee
Hence
\be
\raisebox{25pt}{
\bp(160,90)
\put(0,0){\rib{duality-2}}
\put(80,0){\rib{duality-1}}
\put(-22,31){$\frac{1}{\catdim}$}
\put(55,31){$=$}
\put(109,36){\scriptsize $\overline{a_i}$}
\put(-3,68){\scriptsize $U_i^\vee$}
\put(-3,-10){\scriptsize $U_i^\vee$}
\put(121,-10){\scriptsize $U_i^\vee$}
\put(17,68){\scriptsize $U_i$}
\put(33,68){\scriptsize $U_i^\vee$}
\put(121,68){\scriptsize $U_i^\vee$}
\put(98,68){\scriptsize $U_i$}
\put(80,68){\scriptsize $U_i^\vee$}
\ep
}
\ee
To determine $\overline{a_i}$, 
we take a partial trace on both sides an arrive at

\be
\raisebox{25pt}{
\bp(160,90)
\put(0,0){\rib{duality-3}}
\put(-22,16){$\frac{1}{\catdim}$}
\put(41,16){$=$}
\put(141,16){$=d_i$}
\put(96,14){\scriptsize $\overline{a_i}$}
\put(222,14){\scriptsize $\overline{a_i}$}
\put(-5,48){\scriptsize $U_i^\vee$}
\put(15,48){\scriptsize $U_i$}
\put(68,48){\scriptsize $U_i^\vee$}
\put(193,48){\scriptsize $U_i^\vee$}
\put(210,48){\scriptsize $U_i$}
\put(86,48){\scriptsize $U_i$}
\put(117,18){\scriptsize $U_i$}
\ep
}
\ee
so that
\be
\raisebox{15pt}{
\bp(160,90)
\put(-3,12){\rib{duality-4}}
\put(37,26){$=\frac{1}{\catdim d_i}$}
\put(-12,14){\scriptsize $\overline{a_i}$}
\put(-5,48){\scriptsize $U_i^\vee$}
\put(15,48){\scriptsize $U_i$}
\put(92,48){\scriptsize $U_i^\vee$}
\put(110,48){\scriptsize $U_i$}
\ep
}
\ee
and hence
\be
\raisebox{25pt}{
\bp(160,90)
\put(40,0){\rib{duality-adjunction}}
\put(-30,30){$a_i=\quad\frac{1}{\catdim d_i}$}
\put(65,30){$\otik$}
\put(37,68){\scriptsize $U_i^\vee$}
\put(53,68){\scriptsize $U_i$}
\put(88,68){\scriptsize $\unit$}
\put(88,-9){\scriptsize $\unit$}
\ep
}
\ee
This morphism is non-zero since the coevaluation of $\C$ is non-zero.
Hence the $\ZZ$-permutation equivariant tensor category $\CZ$ is rigid.

The evaluation morphisms $D_{U_i}:U_i^\vee\otimes U_i\to\unit\times\unit$ 
are fixed by the condition $D_{U_i}\circ a_i=\id_{\unit\times\unit}$. 
It is easy to see that the right evaluation reads 

\be
\raisebox{25pt}{
\bp(160,90)
\put(30,0){\rib{duality-evaluation}}
\put(-40,25){$D_{U_i}=\quad \catdim $}
\put(58,25){$\otik$}
\put(100,25){$\in\homcz(U_i^\vee\otimes U_i,\unit\times\unit)$}
\put(26,-10){\scriptsize $U_i^\vee$}
\put(45,-10){\scriptsize $U_i$}
\put(79,68){\scriptsize $\unit$}
\put(79,-9){\scriptsize $\unit$}
\ep
}
\ee
Similarly we find for the left evaluation
\be
\raisebox{25pt}{
\bp(160,90)
\put(31,0){\rib{duality-evaluation}}
\put(-40,25){$\tilde D_{U_i}=\quad \catdim$}
\put(58,25){$\otik$}
\put(100,25){$\in\homcz(U_i\otimes U_i^\vee,\unit\times\unit)$}
\put(26,-10){\scriptsize $U_i$}
\put(45,-10){\scriptsize $U_i^\vee$}
\put(79,68){\scriptsize $\unit$}
\put(79,-9){\scriptsize $\unit$}
\ep
}
\ee

\newpage
\bibliography{bib}{}
\bibliographystyle{alpha}
\end{document}